\theoremstyle{theorem}
\newtheorem{theorem}{Theorem}
\newtheorem{lemma}[theorem]{Lemma}
\newtheorem{proposition}[theorem]{Proposition}
\newtheorem{corollary}[theorem]{Corollary}
\theoremstyle{definition}
\newtheorem{definition}[theorem]{Definition}
\newtheorem{example}[theorem]{Example}
\newtheorem{remark}[theorem]{Remark}
\theoremstyle{remark}
\theoremstyle{question}
\theoremstyle{example}
\newtheorem{question}[theorem]{Question}
\newcommand{\N}{\mathbb{N}}
\newcommand{\Z}{\mathbb{Z}}
\newcommand{\Q}{\mathbb{Q}}
\newcommand{\R}{\mathbb{R}}
\newcommand{\C}{\mathbb{C}}
\newcommand{\K}{\mathbb{K}}
\newcommand{\cT}{\mathcal{T}}
\newcommand{\norm}[1]{\lVert#1\rVert}
\newcommand{\Orb}   {\operatorname{Orb}}
\newcommand{\card}  {\operatorname{card}}
\newcommand{\ldens} {\operatorname{\underline{dens}}}
\newcommand{\udens} {\operatorname{\overline{dens}}}
\newcommand{\spa}   {\operatorname{span}}
\newcommand{\re}    {\operatorname{Re}}
\newcommand{\eps}{\varepsilon}
\newcommand{\ov} {\overline}
\newcommand{\shs}{\hspace*{6mm}} 
\author{N. C. Bernardes Jr.,  
A. Bonilla  and 
A. Peris
}
\date{} 
\title{Mean Li-Yorke chaos in Banach spaces}
\begin{document}

\maketitle

\begin{abstract}
We investigate the notion of mean Li-Yorke chaos for operators on Banach
spaces. We show that it differs from the notion of distributional
chaos of type 2, contrary to what happens in the context of topological
dynamics on compact metric spaces. We prove that an operator is mean
Li-Yorke chaotic if and only if it has an absolutely mean irregular vector.
As a consequence, absolutely Ces\`aro bounded operators are never mean
Li-Yorke chaotic. Dense mean Li-Yorke chaos is shown to be equivalent to
the existence of a dense (or residual) set of absolutely mean irregular
vectors. As a consequence, every mean Li-Yorke chaotic operator
is densely mean Li-Yorke chaotic on some infinite-dimensional closed
invariant subspace. A (Dense) Mean Li-Yorke Chaos Criterion and a sufficient
condition for the existence of a dense absolutely mean irregular manifold
are also obtained. Moreover, we construct an example of an invertible
hypercyclic operator $T$ such that every nonzero vector is absolutely mean
irregular for both $T$ and $T^{-1}$. Several other examples are also
presented. Finally, mean Li-Yorke chaos is also investigated for
$C_0$-semigroups of operators on Banach spaces.\footnote{2010 Mathematics
Subject Classification: 47A16, 37D45.

Keywords: Banach spaces, operators, mean Li-Yorke chaos,
absolute Ces\`aro boundedness, absolutely mean irregular vector,
distributional chaos, hypercyclicity.}
\end{abstract}


\section{Introduction}

\shs In recent years, it has become popular in the area of dynamical systems
to investigate notions related to averages involving orbits or pseudo-orbits,
such as mean Li-Yorke chaos~\cite{GARJIN,HLY}, mean equicontinuity and mean
sensitivity~\cite{LTY}, and notions of shadowing with average error in
tracing~\cite{WOC}.

\smallskip
Our goal in this work is to investigate the notion of mean Li-Yorke chaos
for operators on Banach spaces. It turns out that this notion is intimately
related to the notion of absolutely mean irregular vector. Moreover,
we also establish some results on absolutely Ces\`aro bounded operators.

\smallskip
Let us now present the relevant definitions for our work.

\begin{definition}
An operator $T$ on a Banach space $X$ is said to be \emph{mean Li-Yorke
chaotic} if there is an uncountable subset $S$ of $X$ (a \emph{mean Li-Yorke
set} for $T$) such that every pair $(x,y)$ of distinct
points in $S$ is a \emph{mean Li-Yorke pair} for $T$, in the sense that
$$
\liminf_{N \to \infty} \frac{1}{N} \sum_{j=1}^N \|T^jx - T^jy\| = 0
\ \ \ \ \mbox{  and } \ \ \ \
\limsup_{N \to \infty} \frac{1}{N} \sum_{j=1}^N \|T^jx - T^jy\| > 0.
$$
If $S$ can be chosen to be dense (resp.\ residual) in $X$, then we say that
$T$ is \emph{densely} (resp.\ \emph{generically}) \emph{mean Li-Yorke chaotic}.
\end{definition}

\begin{definition}
Given an operator $T$ and a vector $x$, we say that $x$ is an
\emph{absolutely mean irregular} (resp.\ \emph{absolutely mean semi-irregular}) \emph{vector}
for $T$ if
$$
\liminf_{N \to \infty} \frac{1}{N} \sum_{j=1}^N \|T^j x\| = 0
\ \ \ \ \mbox{  and } \ \ \ \
\limsup_{N \to \infty} \frac{1}{N} \sum_{j=1}^N \|T^j x\| = \infty
\ (\mbox{ resp.} > 0).
$$
\end{definition}

\begin{definition} \cite{HouLuo15}
An operator $T$ is said to be \emph{absolutely Ces\`aro bounded} if there
exists a constant $C > 0$ such that
$$
\sup_{N \in \N} \frac{1}{N} \sum_{j=1}^N \|T^j x\| \leq C \|x\|
\ \ \mbox{  for all } x \in X.
$$
\end{definition}

Also related to our work is the notion of distributional chaos. Actually,
there are at least four different notions of distributional chaos, namely
DC1, DC2, DC$2\frac{1}{2}$ and DC3 (see Section~2 for the definitions).
For (nonlinear) dynamical systems on compact metric spaces, mean Li-Yorke
chaos is equivalent to DC2 \cite{D}, which is not equivalent to DC1
\cite{SS}. The situation is different in our context. Indeed, we saw in
\cite{BBPW} that DC1 and DC2 are always equivalent for operators on Banach
spaces, but we will show that DC2 is not equivalent to mean Li-Yorke chaos
in this context. The paper is organized as follows.

\smallskip
Section~2 recalls some definitions and fixes the notation.

\smallskip
In Section~3 we prove that an operator is mean Li-Yorke chaotic if and
only if it has an absolutely mean irregular vector. As a consequence,
no absolutely Ces\`aro bounded operator is mean Li-Yorke chaotic.
We also establish a Mean Li-Yorke Chaos Criterion and present several
examples, including an example of a DC1 (= DC2) operator which is not
mean Li-Yorke chaotic. Finally, we show that the Frequent Hypercyclicity
Criterion implies mean Li-Yorke chaos.

\smallskip
Section~4 is devoted to show that dense mean Li-Yorke chaos is equivalent to the
existence of a dense (or residual) set of absolutely mean irregular vectors.
As a consequence, every mean Li-Yorke chaotic operator is densely mean
Li-Yorke chaotic on some infinite-dimensional closed invariant subspace.
We also establish a Dense Mean Li-Yorke Criterion and several sufficient
conditions for dense mean Li-Yorke chaos. As an application, we give an
example of a densely mean Li-Yorke chaotic operator which is not Ces\`aro
Hypercyclic.

\smallskip
In Section~5 we establish a sufficient condition for the existence of a
dense absolutely mean irregular manifold. As an application, we obtain a
dichotomy for unilateral weighted backward shifts $B_w$, which says that
either $\frac{1}{N} \sum_{j=1}^N \|(B_w)^jx\| \to 0$ for all $x \in X$ or $B_w$
admits a dense absolutely mean irregular manifold. By using this dichotomy,
we give an example of a densely mean Li-Yorke chaotic operator which is not
hypercyclic.

\smallskip
In Section~6 we give some characterizations for generic mean Li-Yorke chaos
and construct an example of an invertible hypercyclic operator $T$ such that
both $T$ and $T^{-1}$ are completely absolutely mean irregular.

\smallskip
Finally, Section~7 is devoted to the study of mean Li-Yorke chaos for
$C_0$-semigroups of operators on Banach spaces.


\section{Notations and preliminaries}

\shs Throughout this paper $X$ will denote an arbitrary Banach space,
unless otherwise specified, and $L(X)$ is the space of all
bounded linear operators on $X$.

\smallskip
Let us recall that $T \in L(X)$ is said to be \emph{Li-Yorke chaotic} if
there exists an uncountable set $\Gamma \subset X$ such that for every pair
$(x,y) \in \Gamma \times \Gamma$ of distinct points, we have
\[
 \liminf_{n \to \infty} \|T^n x - T^n y\| = 0
 \ \ \mbox{ and } \ \
 \limsup_{n \to \infty} \|T^n x - T^n y\| > 0.
\]
$\Gamma$ is called a \emph{scrambled set} for $T$ and each such pair
$(x,y)$ is called a \emph{Li-Yorke pair} for $T$.

\smallskip
We also recall that the \emph{lower} and the \emph{upper densities} of a set
$A \subset \N$ are defined as
$$
\ldens(A):= \liminf_{n \to \infty} \frac{\card(A \cap [1,n])}{n}
 \ \ \mbox{  and } \ \
\udens(A):= \limsup_{n \to \infty} \frac{\card(A \cap [1,n])}{n},
$$
respectively. Given $T \in L(X)$, $x ,y \in X$ and $\delta > 0$, the
\emph{lower} and the \emph{upper distributional functions} of $x,y$
associated to $T$ are defined by
$$
F_{x,y}(\delta):= \ldens(\{j \in \N : \|T^j x - T^j y\| <
\delta\}),
$$
$$
F_{x,y}^*(\delta):= \udens(\{j \in \N : \|T^j x - T^j y\| <
\delta\}),
$$
respectively. If the pair $(x,y)$ satisfies
\begin{description}
\item {(DC1)} $F^*_{x,y} \equiv 1$ and $F_{x,y}(\eps) = 0$
              for some $\eps > 0$, or
\item {(DC2)} $F^*_{x,y} \equiv 1$ and $F_{x,y}(\eps) < 1$
              for some $\eps > 0$, or
\item {(DC2$\frac{1}{2}$)} There exist $c > 0$ and $r > 0$ such that
              $F_{x,y}(\delta) < c < F^*_{x,y}(\delta)$
              for all $ 0 < \delta < r$, or
\item {(DC3)} $F_{x,y}(\delta) < F^*_{x,y}(\delta)$ for all $\delta$
              in a nondegenerate interval $J \subset (0,\infty)$,
\end{description}
then $(x,y)$ is called a \emph{distributionally chaotic pair of type}
$k \in \{1,2,2\frac{1}{2},3\}$ for $T$. The operator $T$ is said to
be \emph{distributionally chaotic of type} $k$ (DC$k$) if there
exists an uncountable set $\Gamma \subset X$ such that every pair
$(x,y)$ of distinct points in $\Gamma$ is a distributionally chaotic
pair of type $k$ for $T$. In this case, $\Gamma$ is a \emph{ 
distributionally scrambled set of type $k$} for $T$. Distributional
chaos of type 1 is often called simply \emph{distributional chaos}.

\smallskip
For operators on Banach spaces, DC1 and DC2 are always equivalent
\cite[Theorem~2]{BBPW}, and imply Li-Yorke chaos. Li-Yorke chaos and
distributional chaos for linear operators have been studied in
\cite{BR,BBMP11,BB16,BBMP,BBMP2,BBPW,BPR17,HCC09,HouLuo15,MOP09,MOP13,WWC,YY16,YY17,YY18}, for instance.

\smallskip
Let us also recall that $T \in L(X)$ is \emph{frequently hypercyclic} (FH),
\emph{upper-frequently hypercyclic} (UFH), \emph{reiteratively hypercyclic}
(RH) or \emph{hypercyclic} (H) if there exists $x \in X$ such that for every
nonempty open subset $U$ of $X$, the set $\{n \in \N : T^n x \in U\}$
has positive lower density, has positive upper density, has positive
upper Banach density or is nonempty, respectively.
$T$ is \emph{Ces\`aro hypercyclic} if there exists $x \in X$ such that
the sequence $\big(\frac{1}{n} \sum_{j=0}^{n-1} T^jx\big)_{n \in \N}$
is dense in $X$. Moreover, $T$ is
\emph{mixing} if for every nonempty open sets $U,V \subset X$, there
exists $n_0 \in \N$ such that $T^n(U) \cap V \neq \emptyset$ for all
$n \geq n_0$, $T$ is \emph{weakly-mixing} if $T \oplus T$ is hypercyclic,
and $T$ is \emph{Devaney chaotic} if it is hypercyclic and has a dense set
of periodic points. See \cite{BaGr06,BM,BMPP,BoGE07,GM,GEP11}, for instance.

\smallskip
Finally, the orbit of $x$ is \emph{distributionally near to $0$} if there is
$A \subset \N$ with $\udens(A) = 1$ such that $\lim_{n \in A} T^nx = 0$.
We say that $x$ has a \emph{distributionally unbounded orbit} if there is
$B \subset \N$ with $\udens(B) = 1$ such that $\lim_{n \in B} \|T^nx\|
= \infty$. If the orbit of $x$ has both properties, then $x$ is a
\emph{distributionally irregular vector} for $T$. It was proved in
\cite{BBMP} that $T$ is distributionally chaotic if and only if $T$ has
a distributionally irregular vector.


\section{Mean Li-Yorke chaotic operators and absolutely mean irregular
vectors}

\shs We begin with some useful characterizations of absolutely Ces\`aro
bounded operators.

\begin{theorem}\label{ACB}
For every $T \in L(X)$, the following assertions are equivalent:
\begin{itemize}
\item [\rm (i)]   $T$ is not absolutely Ces\`aro bounded;
\item [\rm (ii)]  There is a vector $x \in X$ such that
      $$
      \sup_{N \in \N} \frac{1}{N} \sum_{j=1}^N \|T^j x\| = \infty;
      $$
\item [\rm (iii)] The set of all vectors $y \in X$ such that
      $$
      \sup_{N \in \N} \frac{1}{N} \sum_{j=1}^N \|T^j y\| = \infty
      $$
      is residual in $X$.
\end{itemize}
\end{theorem}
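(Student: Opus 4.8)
The plan is to prove the equivalences by establishing the cycle (i) $\Rightarrow$ (iii) $\Rightarrow$ (ii) $\Rightarrow$ (i). The implication (iii) $\Rightarrow$ (ii) is immediate, since a residual set in a (nonempty) complete metric space is nonempty, so there certainly exists at least one such vector $x$. The implication (ii) $\Rightarrow$ (i) is the contrapositive of a trivial observation: if $T$ were absolutely Ces\`aro bounded with constant $C$, then $\frac{1}{N}\sum_{j=1}^N \|T^j x\| \leq C\|x\| < \infty$ for every $x$, so the supremum over $N$ would be finite for all $x$, contradicting (ii). The real content is therefore the implication (i) $\Rightarrow$ (iii).

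For (i) $\Rightarrow$ (iii), I would use a Baire category argument. For each $N \in \N$, define the map $p_N : X \to [0,\infty)$ by
$$
p_N(x) := \frac{1}{N} \sum_{j=1}^N \|T^j x\|.
$$
Each $p_N$ is a continuous seminorm on $X$ (it is a finite sum of compositions of the continuous operators $T^j$ with the norm, scaled by $1/N$). Set $p(x) := \sup_{N \in \N} p_N(x)$, which is a lower-semicontinuous $[0,\infty]$-valued function on $X$. The target set is exactly $\{y \in X : p(y) = \infty\}$. I would express its complement, $\{y : p(y) < \infty\}$, as the countable union $\bigcup_{k=1}^\infty A_k$ where
$$
A_k := \{y \in X : p_N(y) \leq k \text{ for all } N \in \N\} = \bigcap_{N \in \N} p_N^{-1}([0,k]).
$$
Since each $p_N$ is continuous, each $A_k$ is closed. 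Thus it suffices to show that every $A_k$ has empty interior, for then $\{y : p(y) = \infty\}$ is the complement of a countable union of closed nowhere dense sets, hence residual.

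The key step is proving that each $A_k$ has empty interior, and this is where I expect the main (though modest) obstacle to lie. The natural tool is a uniform boundedness / Baire-style dichotomy for the seminorms $p_N$. Observe that each $A_k$ is closed, convex, balanced, and absorbing-restricted in the sense that $A_k$ is symmetric under scaling. If some $A_k$ had nonempty interior, then since $A_k$ is closed, convex and balanced it would contain a ball $B(0,r)$ around the origin; this would force $p_N(y) \leq k$ for all $y$ with $\|y\| \leq r$ and all $N$, hence $p_N(x) \leq (k/r)\|x\|$ for all $x \in X$ and all $N$ by homogeneity. Taking the supremum over $N$ would give $p(x) \leq (k/r)\|x\|$ for all $x$, i.e. $T$ would be absolutely Ces\`aro bounded with constant $C = k/r$, contradicting (i). Therefore no $A_k$ has interior, completing the argument. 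The only point requiring care is the passage from ``$A_k$ has nonempty interior'' to ``$A_k$ contains a ball centered at the origin,'' which follows from the convexity and balancedness of $A_k$ (an interior point $y_0$ together with its reflection $-y_0$ and convexity yields an origin-centered ball); I would spell this out explicitly rather than invoke it as folklore.
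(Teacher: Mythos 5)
Your proposal is correct, but it takes a genuinely different route from the paper's. The paper factors the main content as (i) $\Rightarrow$ (ii) $\Rightarrow$ (iii): the implication (i) $\Rightarrow$ (ii) is proved by a gliding-hump construction (assuming (ii) fails, one inductively builds $x=\sum_n x_n$ with $\|x_n\|<2^{-n}$ and times $N_k$ at which the Ces\`aro means of the partial sums --- hence, by continuity of $y\mapsto\frac{1}{N_k}\sum_{j=1}^{N_k}\|T^jy\|$, of $x$ itself --- exceed $k$, a contradiction), and (ii) $\Rightarrow$ (iii) is proved by noting that the set in (iii) is a $G_\delta$ and is dense, because for any $z$ outside it the vectors $z+\lambda x$, $\lambda\neq 0$, with $x$ the vector from (ii), all belong to it. You instead prove (i) $\Rightarrow$ (iii) in a single stroke by the classical Banach--Steinhaus convexity argument: the complement of the set in (iii) is $\bigcup_k A_k$ with $A_k:=\bigcap_N\{y: p_N(y)\leq k\}$ closed, convex and balanced, and if some $A_k$ had interior it would contain a ball $B(0,r)$ (your $y_0$/$-y_0$/convexity argument is the right way to see this), whence $\sup_N p_N(x)\leq (k/r)\|x\|$ for all $x$, i.e.\ $T$ would be absolutely Ces\`aro bounded. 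Both arguments are sound; yours is shorter and leans on standard uniform-boundedness machinery, while the paper's is more hands-on and constructive (it exhibits a vector with unbounded Ces\`aro means rather than deducing its existence by category), and its intermediate step isolates the statement that a \emph{single} vector as in (ii) already forces residuality --- the form in which the theorem is later invoked, e.g.\ in the proof of Theorem~\ref{DMLY}. Note that this consequence is also recovered from your cycle, since (ii) $\Rightarrow$ (i) is trivial.
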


\begin{proof}
The implications (iii) $\Rightarrow$ (ii) $\Rightarrow$ (i) are trivial.

\smallskip
\noindent  (i) $\Rightarrow$ (ii): Since $T$ is not absolutely Ces\`aro
bounded, given $\delta > 0$ and $C > 0$, there exist $x \in X$ and
$N \in \N$ so that
$$
\|x\| < \delta
\ \ \ \ \mbox{  and } \ \ \ \
\frac{1}{N} \sum_{j=1}^N \|T^j x\| > C.
$$
Let us assume that (ii) is false, that is,
$$
\sup_{N \in \N} \frac{1}{N} \sum_{j=1}^N \|T^j x\| < \infty
\ \ \mbox{  for all } x \in X.
$$
Then, we can define inductively sequences $(x_n)$ in $X$ and $(N_n)$ in $\N$
so that $\|x_n\| < 2^{-n}$ for all $n$, and
$$
\frac{1}{N_k} \sum_{j=1}^{N_k} \|T^j(x_1+\cdots+x_n)\| > k
\ \ \mbox{  whenever } 1 \leq k \leq n.
$$
Let $x:= \sum_{n=1}^\infty x_n \in X$. Then
$$
\frac{1}{N_k} \sum_{j=1}^{N_k} \|T^j x\| \geq k \ \ \mbox{  for all } k \in \N.
$$
This is a contradiction, since we are assuming that (ii) is false.

\smallskip
\noindent (ii) $\Rightarrow$ (iii): Let $A$ denote the set considered
in (iii). Since
$$
A = \bigcap_{k=1}^\infty \bigcup_{N=1}^\infty
\Big\{y \in X : \frac{1}{N} \sum_{j=1}^N \|T^j y\| > k\Big\},
$$
we have that $A$ is a $G_\delta$ set. If $z \in X \backslash A$ then
$$
\sup_{N \in \N} \frac{1}{N} \sum_{j=1}^N \|T^j z\| < \infty.
$$
Let $x \in X$ be a vector given by (ii). Then
$$
\sup_{N \in \N} \frac{1}{N} \sum_{j=1}^N \|T^j(z + \lambda x)\| = \infty
\ \ \mbox{  whenever } \lambda \neq 0,
$$
which implies that $z \in \ov{A}$. Thus, $A$ is dense in $X$.
\end{proof}

Let us now prove that mean Li-Yorke chaos is equivalent to the existence
of an absolutely mean irregular vector.

\begin{theorem}\label{MLY}
For every $T \in L(X)$, the following assertions are equivalent:
\begin{itemize}
\item [\rm (i)]   $T$ is mean Li-Yorke chaotic;
\item [\rm (ii)]  $T$ has a mean Li-Yorke pair;
\item [\rm (iii)] $T$ has an absolutely mean semi-irregular vector;
\item [\rm (iv)]  $T$ has an absolutely mean irregular vector;
\item [\rm (v)]   The restriction of $T$ to some infinite-dimensional
      closed $T$-invariant subspace $Y$ has a residual set of
      absolutely mean irregular vectors.
\end{itemize}
\end{theorem}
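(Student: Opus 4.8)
The plan is to deduce every equivalence from a handful of trivial implications together with one substantial implication, namely (iii) $\Rightarrow$ (v). The implications (i) $\Rightarrow$ (ii), (iv) $\Rightarrow$ (iii) and (v) $\Rightarrow$ (iv) are immediate from the definitions. Writing $A_N(y) := \frac{1}{N}\sum_{j=1}^N \|T^j y\|$, for (ii) $\Rightarrow$ (iii) I note that a mean Li-Yorke pair $(x,y)$ gives $\|T^j x - T^j y\| = \|T^j(x-y)\|$, so $z := x-y$ is absolutely mean semi-irregular; and for (iii) $\Rightarrow$ (i), if $x$ is absolutely mean semi-irregular then $A_N(\lambda x - \mu x) = |\lambda-\mu|\,A_N(x)$, so every pair of distinct points of the uncountable set $\{\lambda x : \lambda \in [0,1]\}$ is a mean Li-Yorke pair. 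Thus (i), (ii), (iii) are already equivalent, and once (iii) $\Rightarrow$ (v) is established the chain (iii) $\Rightarrow$ (v) $\Rightarrow$ (iv) $\Rightarrow$ (iii) closes everything.

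For (iii) $\Rightarrow$ (v) I would pass to the cyclic subspace $Y := \ov{\spa}\{T^n x : n \geq 0\}$, a closed $T$-invariant subspace on which $x$ is still semi-irregular. The engine is a lemma (stated in the next paragraph) asserting that $T|_Y$ is \emph{not} absolutely Cesàro bounded. Granting it, Theorem~\ref{ACB} applied to $T|_Y$ shows that $\mathcal{B} := \{y \in Y : \sup_N A_N(y) = \infty\}$ is residual in $Y$; since for a nonnegative sequence $\sup_N A_N(y) = \infty$ is the same as $\limsup_N A_N(y) = \infty$, the set $\mathcal{B}$ consists of the vectors with unbounded mean orbit. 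On the other side, $\mathcal{A} := \{y \in Y : \liminf_N A_N(y) = 0\} = \bigcap_k \bigcap_{N_0} \bigcup_{N \geq N_0}\{y : A_N(y) < 1/k\}$ is a $G_\delta$ set, each inner set being open by continuity of $A_N$. It is also dense, because every $y = p(T)x$ (with $p$ a polynomial) lies in $\mathcal{A}$ and such vectors are dense in $Y$: if $\deg p = d$ and $A_{N_k}(x) \to 0$, then for each $0 \leq m \leq d$ the shifted average satisfies $A_{N_k - d}(T^m x) \leq \frac{N_k}{N_k - d}\,A_{N_k}(x) \to 0$, whence $A_{N_k - d}(p(T)x) \to 0$ by the triangle inequality. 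Therefore $\mathcal{A} \cap \mathcal{B}$ — which is exactly the set of absolutely mean irregular vectors of $T|_Y$ — is residual in $Y$ by Baire's theorem. Finally, absolutely mean semi-irregular vectors do not exist in finite dimensions (a routine consequence of the Jordan form: $\|T^j x\|$ either tends to $0$, forcing $\limsup_N A_N(x) = 0$, or is bounded below or grows, forcing $\liminf_N A_N(x) > 0$), so the presence of $x \in Y$ forces $\dim Y = \infty$, giving (v).

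The main obstacle is the lemma: a semi-irregular vector precludes absolute Cesàro boundedness. Equivalently, if $T$ is absolutely Cesàro bounded with constant $C$ and $\liminf_N A_N(x) = 0$, then $\limsup_N A_N(x) = 0$. I would argue by contradiction. First, $\liminf_N A_N(x) = 0$ forces $\liminf_j \|T^j x\| = 0$, since $\|T^j x\| \geq \delta$ eventually would give $A_N(x) \to \delta > 0$; hence there exist arbitrarily large $j_*$ with $\|T^{j_*}x\|$ as small as desired. Suppose $\limsup_N A_N(x) = L > 0$ and set $S_n := \sum_{j=1}^n \|T^j x\|$. Choose $j_*$ with $\|T^{j_*}x\| < L/(4C)$, and then a loud time $M > j_*$ with $A_M(x) > L/2$ and with $M$ large enough that $S_{j_*}/M < L/4$. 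Applying absolute Cesàro boundedness to the vector $T^{j_*}x$ over a window of length $M - j_*$ controls the mass accumulated after time $j_*$, namely $\sum_{i=j_*+1}^{M} \|T^i x\| \leq C(M-j_*)\|T^{j_*}x\| < (L/4)M$; adding $S_{j_*} < (L/4)M$ yields $A_M(x) = S_M/M < L/2$, contradicting $A_M(x) > L/2$. This self-improving estimate, whereby boundedness transports the smallness at one quiet time forward across an arbitrarily long window, is the genuinely linear-dynamical ingredient that a merely combinatorial oscillating sequence would not respect, and it is the step I expect to require the most care.
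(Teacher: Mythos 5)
Your proof is correct and follows essentially the same route as the paper's: the same cycle of easy implications reduced to one substantial step, the same key lemma that an absolutely mean semi-irregular vector on its cyclic subspace rules out absolute Ces\`aro boundedness (proved by the identical trick of applying the Ces\`aro bound to $T^{j_*}x$ at a quiet time $j_*$ to control the tail sum up to a loud time $M$), and the same Baire-category conclusion combining Theorem~\ref{ACB} with the $G_\delta$-plus-density argument for the set where the averages have $\liminf$ equal to zero. The only differences are cosmetic: you run (iii)$\Rightarrow$(v) where the paper runs (i)$\Rightarrow$(v) after first extracting the semi-irregular vector $u=x-y$, and you make explicit two points the paper leaves implicit, namely the infinite-dimensionality of $Y$ (via the finite-dimensional Jordan-form argument) and the verification that $\spa(\Orb(x,T))$ lies in the $\liminf$-zero set.
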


\begin{proof}
The implications (v) $\Rightarrow$ (iv) $\Rightarrow$ (iii) are trivial.

\smallskip
\noindent (iii) $\Rightarrow$ (ii): If $x$ is an absolutely mean
semi-irregular vector for $T$, it is clear that $(x,0)$ is a
mean Li-Yorke pair for $T$.

\smallskip
\noindent (ii) $\Rightarrow$ (i): If $(x,y)$ is a mean Li-Yorke pair
for $T$, then $u:= x-y$ is an absolutely mean semi-irregular vector for $T$.
Hence, it follows easily that $\{\lambda u : \lambda \in \K\}$
is an uncountable mean Li-Yorke set for~$T$.

\smallskip
\noindent (i) $\Rightarrow$ (v): Let $(x,y)$ be a mean Li-Yorke pair
for $T$ and put $u:= x-y$. Let
$$
Y:= \ov{\spa}(\Orb(u,T)),
$$
which is an infinite-dimensional closed $T$-invariant subspace of $X$.
Consider the operator $S \in L(Y)$ obtained by restricting $T$ to $Y$.
We claim that $S$ is not absolutely Ces\`aro bounded. Indeed, suppose that
this is not the case and let $C > 0$ be such that
$$
\sup_{N \in \N} \frac{1}{N} \sum_{j=1}^N \|S^j z\| \leq C \|z\|
\ \ \mbox{  for all } z \in Y.
$$
Since $(x,y)$ is a mean Li-Yorke pair for $T$, we have that
$$
\liminf_{N \to \infty} \frac{1}{N} \sum_{j=1}^N \|S^j u\| = 0
\ \ \ \ \mbox{  and } \ \ \ \
\limsup_{N \to \infty} \frac{1}{N} \sum_{j=1}^N \|S^j u\| > \eps,
$$
for some $\eps > 0$. Let $\delta > 0$ be so small that
$$
\limsup_{N \to \infty} \frac{1}{N} \sum_{j=1}^N \|S^j u\| > \delta + \eps.
$$
There are $N \in \N$ and $M > N$ such that
$$
\frac{1}{N} \sum_{j=1}^N \|S^j u\| < \delta
\ \ \ \ \mbox{  and } \ \ \ \
\frac{1}{M} \sum_{j=1}^M \|S^j u\| > \delta + \eps.
$$
Let $K \in \{1,\ldots,N\}$ be the largest integer such that
$\|S^K u\| < \delta$. Then
$$
\frac{1}{K} \sum_{j=1}^K \|S^j u\| < \delta.
$$
Since
\begin{align*}
\delta + \eps &< \frac{1}{M} \sum_{j=1}^M \|S^j u\|\\
              &= \frac{K}{M} \frac{1}{K} \sum_{j=1}^K \|S^j u\|
                + \frac{M-K}{M} \frac{1}{M-K} \sum_{j=K+1}^M \|S^j u\|\\
              &< \delta + \frac{1}{M-K} \sum_{j=K+1}^M \|S^j u\|,
\end{align*}
we have that
$$
\eps < \frac{1}{M-K} \sum_{j=K+1}^M \|S^j u\|
     = \frac{1}{M-K} \sum_{j=1}^{M-K} \|S^j S^K u\|
  \leq C \|S^Ku\|
     < C \delta.
$$
Hence, $C > \eps/\delta$. Since $\delta > 0$ can be chosen arbitrarily
close to $0$, we have a contradiction. This proves that $S$ is not
absolutely Ces\`aro bounded. Therefore, the set
$$
A:= \Big\{z \in Y : \sup_{N \in \N} \frac{1}{N} \sum_{j=1}^N \|S^j z\|
                    = \infty \Big\}
$$
is residual in $Y$ by Theorem~\ref{ACB}. On the other hand, let
$$
B:= \Big\{z \in Y : \inf_{N \in \N} \frac{1}{N} \sum_{j=1}^N \|S^j z\|
                    = 0 \Big\}.
$$
Since
$$
B = \bigcap_{k=1}^\infty \bigcup_{N=1}^\infty
\Big\{z \in Y : \frac{1}{N} \sum_{j=1}^N \|S^j z\| < \frac{1}{k} \Big\},
$$
we have that $B$ is a $G_\delta$ set. Since there is an increasing sequence
$(N_k)$ in $\N$ such that
$$
\lim_{k \to \infty} \frac{1}{N_k} \sum_{j=1}^{N_k} \|S^j u\| = 0,
$$
it follows that $\spa(\Orb(u,S))$ is contained in $B$. Hence, $B$ is
a residual set in $Y$. Our conclusion is that $A \cap B$ is a residual
set in $Y$, which is formed by absolutely mean irregular vectors for $S$
(hence for $T$).
\end{proof}

\begin{corollary}\label{NotACB}
No absolutely Ces\`aro bounded operator on a Banach space is mean Li-Yorke
chaotic.
\end{corollary}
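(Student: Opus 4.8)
The plan is to derive this immediately from the equivalence (i) $\Leftrightarrow$ (iv) in Theorem~\ref{MLY}, arguing by contradiction. I would suppose that some $T \in L(X)$ is simultaneously absolutely Ces\`aro bounded and mean Li-Yorke chaotic, and aim to contradict the quantitative bound coming from absolute Ces\`aro boundedness. By the implication (i) $\Rightarrow$ (iv) of Theorem~\ref{MLY}, the assumption of mean Li-Yorke chaos produces an absolutely mean irregular vector $x$ for $T$, which in particular satisfies
$$
\limsup_{N \to \infty} \frac{1}{N} \sum_{j=1}^N \|T^j x\| = \infty.
$$

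On the other hand, absolute Ces\`aro boundedness furnishes a constant $C > 0$ with $\frac{1}{N} \sum_{j=1}^N \|T^j x\| \leq C\|x\|$ for every $N \in \N$. Passing to the limit superior in $N$ on the left-hand side forces that quantity to be at most $C\|x\| < \infty$, which directly contradicts the displayed equality. Hence the two hypotheses are incompatible, and no absolutely Ces\`aro bounded operator can be mean Li-Yorke chaotic.

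I do not expect any genuine obstacle here: all of the work has already been packaged into Theorem~\ref{MLY}, whose nontrivial content is exactly the extraction of an absolutely mean irregular vector---one for which the Ces\`aro averages $\frac{1}{N}\sum_{j=1}^N \|T^j x\|$ have \emph{infinite} limit superior. Once that equivalence is available, the corollary reduces to the trivial observation that such a vector cannot coexist with a uniform bound on precisely those averages, so the proof is a one-line contradiction.
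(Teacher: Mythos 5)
Your proof is correct and is exactly the derivation the paper intends: the corollary is stated without proof as an immediate consequence of Theorem~\ref{MLY}, since by (i) $\Rightarrow$ (iv) a mean Li-Yorke chaotic operator has a vector $x$ with $\limsup_{N\to\infty}\frac{1}{N}\sum_{j=1}^N\|T^jx\|=\infty$, which is incompatible with the uniform bound $C\|x\|$ from absolute Ces\`aro boundedness.
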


The proof of Theorem~\ref{MLY} clearly implies the following result:

\begin{theorem}\label{Semi-Irregular}
For every $T \in L(X)$, the set of all absolutely mean irregular vectors
for $T$ is dense in the set of all absolutely mean semi-irregular vectors
for $T$.
\end{theorem}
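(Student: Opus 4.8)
The plan is to recognize that the (i)\,$\Rightarrow$\,(v) portion of the proof of Theorem~\ref{MLY} already produces, around any prescribed absolutely mean semi-irregular vector, a dense set of absolutely mean irregular vectors, so essentially nothing new needs to be proved beyond making this localization explicit. I would fix an arbitrary absolutely mean semi-irregular vector $u$ for $T$. As in the (iii)\,$\Rightarrow$\,(ii) step, the pair $(u,0)$ is then a mean Li-Yorke pair whose difference vector is $u$ itself, so the construction used in the proof of Theorem~\ref{MLY} applies directly to it.

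Next I would run that construction verbatim for the pair $(u,0)$, setting $Y := \ov{\spa}(\Orb(u,T))$ and letting $S \in L(Y)$ be the restriction of $T$ to $Y$. The argument there shows that $S$ is not absolutely Ces\`aro bounded, so by Theorem~\ref{ACB} the set $A$ of vectors $z \in Y$ with $\sup_{N} \frac{1}{N} \sum_{j=1}^N \|S^j z\| = \infty$ is residual in $Y$; together with the $G_\delta$ set $B$ of vectors with $\inf_{N} \frac{1}{N} \sum_{j=1}^N \|S^j z\| = 0$ (which contains $\spa(\Orb(u,S))$ and is therefore residual), the intersection $A \cap B$ is a residual set in $Y$ consisting of absolutely mean irregular vectors for $S$. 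Since $\|S^j z\| = \|T^j z\|$ for every $z \in Y$, these are precisely absolutely mean irregular vectors for $T$ that happen to lie in $Y$.

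The one extra observation to make is that $u \in Y$, because $u$ belongs to $\Orb(u,T)$, and that a residual subset of the Banach space $Y$ is in particular dense in $Y$. Consequently $u$ lies in the closure of $A \cap B$, a set of absolutely mean irregular vectors for $T$. As $u$ was an arbitrary absolutely mean semi-irregular vector, this shows that the set of absolutely mean irregular vectors for $T$ is dense in the set of absolutely mean semi-irregular vectors. There is no genuine obstacle here: the hard work — proving that $S$ fails to be absolutely Ces\`aro bounded and assembling the residual set $A \cap B$ — has already been carried out in Theorem~\ref{MLY}; the only point requiring attention is the elementary remark that the subspace $Y$ furnished by that proof contains the starting vector $u$, so that residuality of the irregular vectors in $Y$ upgrades to density precisely at $u$.
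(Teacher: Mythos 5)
Your proposal is correct and is exactly the argument the paper intends: its entire proof of Theorem~\ref{Semi-Irregular} is the remark that the proof of Theorem~\ref{MLY} implies it, and your unpacking---applying the (i)\,$\Rightarrow$\,(v) construction to $Y:=\ov{\spa}(\Orb(u,T))$ for an arbitrary absolutely mean semi-irregular vector $u$, obtaining the residual (hence dense) set $A \cap B$ of absolutely mean irregular vectors in $Y$, and observing $u \in Y$---is precisely that localization. No gaps; the observation that residuality in $Y$ gives density at $u$ is the only point needing to be made explicit, and you make it.
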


\begin{definition}\label{MLYCCDef}
We say that $T \in L(X)$ satisfies the \emph{Mean Li-Yorke Chaos Criterion}
(MLYCC) if there exists a subset $X_0$ of $X$ with the following properties:
\begin{itemize}
\item [(a)] $\displaystyle \liminf_{N \to \infty} \frac{1}{N}
            \sum_{j=1}^{N} \|T^j x\| = 0$ for every $x \in X_0$;
\item [(b)] there are sequences $(y_k)$ in $\ov{\spa(X_0)}$ and $(N_k)$
            in $\N$ such that
            $$
            \frac{1}{N_k}\sum_{j=1}^{N_k}\|T^j y_k\| > k \|y_k\|
            \ \mbox{  for every } k \in \N.
            $$
\end{itemize}
\end{definition}

Let us now prove that this criterion characterizes mean Li-Yorke chaos.

\begin{theorem}\label{MLYCC}
An operator $T \in L(X)$ is mean Li-Yorke chaotic if and only if it
satisfies the MLYCC.
\end{theorem}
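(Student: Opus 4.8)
The plan is to prove both implications, leaning on the characterizations already established in Theorem~\ref{MLY} and on the residuality statement Theorem~\ref{ACB}. The direction ``mean Li-Yorke chaotic $\Rightarrow$ MLYCC'' is routine: by Theorem~\ref{MLY} such a $T$ admits an absolutely mean irregular vector $u$, and I would simply take $X_0 := \{u\}$. Condition (a) holds because $\liminf_N \frac1N\sum_{j=1}^N\|T^ju\| = 0$ by definition, and for (b) note that $\overline{\spa(X_0)} = \K u$; since $\limsup_N \frac1N\sum_{j=1}^N\|T^ju\| = \infty$, there is a sequence $(N_k)$ with $\frac1{N_k}\sum_{j=1}^{N_k}\|T^ju\| > k\|u\|$, so $y_k := u$ works.

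The substance is the converse. By Theorem~\ref{MLY} it suffices to produce a single absolutely mean irregular vector, and I would hunt for it inside the smallest closed $T$-invariant subspace $Z := \overline{\spa}\big(\bigcup_{n\ge 0} T^n(X_0)\big)$ containing $X_0$, writing $S := T|_Z \in L(Z)$. Condition (b) provides vectors $y_k \in \overline{\spa(X_0)} \subseteq Z$ with $\frac1{N_k}\sum_{j=1}^{N_k}\|S^jy_k\| > k\|y_k\|$, so $S$ is not absolutely Cesàro bounded; Theorem~\ref{ACB} then gives that the set $A := \{z\in Z : \sup_N \frac1N\sum_{j=1}^N\|S^jz\| = \infty\}$ is residual in $Z$. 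This secures the unbounded-average half of irregularity.

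For the vanishing half I would exploit the mechanism implicit in the proof of Theorem~\ref{MLY}: if $x$ satisfies (a), with $\frac1{N_m}\sum_{j=1}^{N_m}\|S^jx\| \to 0$ along some $(N_m)$, then every $v = p(S)x$ with $p$ a polynomial satisfies $\|S^jv\| \le \|p(S)\|\,\|S^jx\|$, so $v$ also has vanishing averages along the \emph{same} sequence $(N_m)$; the decay thus propagates from $x$ to all of $\spa(\Orb(x,S))$. The goal would be to show that the $G_\delta$ set $B := \{z\in Z : \inf_N \frac1N\sum_{j=1}^N\|S^jz\| = 0\}$ is dense, hence residual, so that $A \cap B$ is a residual (nonempty) set of absolutely mean irregular vectors for $S$, and therefore for $T$, completing the argument through Theorem~\ref{MLY}.

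The main obstacle is precisely the coexistence of these two halves: the commuting-polynomial trick propagates vanishing averages only along a decay sequence common to all generators involved, whereas (a) supplies a priori a different sequence for each $x \in X_0$, and a finite linear combination of vectors with vanishing lower averages need not itself have vanishing lower averages. I expect to resolve this by a recursive gliding-hump selection that interleaves growth steps with decay steps. In a growth step I would insert a suitably scaled vector from (b), choosing its amplification index so large that a \emph{summably} small norm still forces the average at a sparse time to exceed any prescribed bound (this is what lets $\limsup$ reach $\infty$ despite summability). In a decay step I would use (a) and the density of $\spa(X_0)$ in $Y$ to secure a new, far-out time at which the running average is tiny, making each freshly added hump small enough in norm to be negligible, after dilution, at all previously secured small-average times. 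Controlling the averages of the already-fixed growth humps at the later decay checks is the delicate point on which the whole construction turns.
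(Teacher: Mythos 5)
Your forward implication is exactly the paper's (take $X_0:=\{u\}$ for an absolutely mean irregular $u$ supplied by Theorem~\ref{MLY}), and in your converse the treatment of the ``unbounded half'' is also the paper's: condition (b) forces the restriction of $T$ to a suitable closed invariant subspace to fail absolute Ces\`aro boundedness, so Theorem~\ref{ACB} makes the set $A$ residual there. The gap is the one you name yourself: the density of
$B=\bigl\{z : \inf_{N}\frac{1}{N}\sum_{j=1}^{N}\|S^jz\|=0\bigr\}$.
Condition (a) only gives, for each single $x\in X_0$, decay of the averages along a sequence of times depending on $x$; since vectors with vanishing \emph{lower} average do not form a linear subspace, density of $B$ in $Z=\ov{\spa}\bigl(\bigcup_{n\geq 0}T^n(X_0)\bigr)$ does not follow, and your gliding-hump recursion is only a sketch whose ``delicate point'' (controlling the already-inserted growth humps at all later decay times) is precisely what is left unproven. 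As written, the converse is not established.

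The paper sidesteps this difficulty entirely with a case distinction you are missing. If $T$ has an absolutely mean semi-irregular vector, then $T$ is mean Li-Yorke chaotic by Theorem~\ref{MLY} and there is nothing to prove; so assume it has none. Under that standing assumption, (a) self-improves: for $x\in X_0$, having $\liminf_{N}\frac{1}{N}\sum_{j=1}^{N}\|T^jx\|=0$ together with $\limsup>0$ would make $x$ semi-irregular, hence in fact
$$
\lim_{N\to\infty}\frac{1}{N}\sum_{j=1}^{N}\|T^jx\|=0 \ \mbox{ for every } x\in X_0.
$$
The point is that vectors whose averages \emph{converge} to $0$ form a linear subspace (triangle inequality), so no interleaving of vector-dependent decay sequences is ever needed. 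Taking $Y$ to be the closure of that subspace, $Y$ is closed, $T$-invariant, and contains $\ov{\spa(X_0)}$, hence the vectors $y_k$ of (b); thus $S=T|_Y$ is not absolutely Ces\`aro bounded and $A$ is residual in $Y$ by Theorem~\ref{ACB}, while $B$ is dense in $Y$ by construction and therefore residual, being a $G_\delta$. Then $A\cap B\neq\emptyset$ yields an absolutely mean irregular vector, contradicting the standing assumption. This is a short contradiction argument, not a construction; the recursion you propose, even if it could be completed, would be solving a harder problem than the theorem requires.
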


\begin{proof}
$(\Rightarrow)$: By Theorem~\ref{MLY}, $T$ has an absolutely mean irregular
vector $x$. So, it is enough to consider $X_0:= \{x\}$.

\smallskip
\noindent $(\Leftarrow)$: If $T$ has an absolutely mean semi-irregular
vector, then $T$ is mean Li-Yorke chaotic by Theorem~\ref{MLY}. So, let us
assume that this is not the case. Then, (a) implies that
$$
\lim_{N \to \infty} \frac{1}{N} \sum_{j=1}^{N} \|T^j x\| = 0
\ \mbox{  for every } x \in X_0.
$$
Let
$$
Y:= \ov{\Big\{x \in X : \lim_{N \to \infty} \frac{1}{N}
                        \sum_{j=1}^{N} \|T^j x\| = 0 \Big\}},
$$
which is a closed $T$-invariant subspace of $X$. Consider the operator
$S \in L(Y)$ obtained by restricting $T$ to $Y$. Since the sequence
$(y_k)$ lies in $Y$, (b) implies that $S$ is not absolutely Ces\`aro
bounded. Hence, by Theorem~\ref{ACB}, the set
$$
A:= \Big\{y \in Y : \sup_{N \in \N} \frac{1}{N} \sum_{j=1}^N \|S^j y\|
                    = \infty \Big\}
$$
is residual in $Y$. On the other hand, since the set
$$
B:= \Big\{y \in Y : \inf_{N \in \N} \frac{1}{N} \sum_{j=1}^N \|S^j y\|
                    = 0 \Big\}
$$
is obviously dense in $Y$, it is residual in $Y$. Thus, $A \cap B$ is
residual in $Y$, which proves the existence of an absolutely mean irregular
vector for $T$. This contradicts our assumption that there are no
absolutely mean semi-irregular vector for $T$.
\end{proof}

\begin{example}\label{Ex1}
There are mixing operators that are not mean Li-Yorke chaotic.

\smallskip
Indeed, in \cite[Example 23]{BBPW} it was given some examples of mixing
operators that are absolutely Ces\`aro bounded. One of them was taken from \cite{MOP13}, and appears in the PhD Thesis of  Beltr\'an Meneu with a proof provided by the third author (Theorem 3.7.3 in \cite{beltran14}). By Corollary~\ref{NotACB},
these operators are not mean Li-Yorke chaotic.
\end{example}

\begin{example}\label{Ex2}
There are Devaney chaotic operators that are not mean Li-Yorke chaotic.

\smallskip
Indeed, let $T$ be the Devaney chaotic operator constructed by Menet \cite{M}.
Suppose that $x \neq 0$ and
$$
\liminf_{N \to \infty} \frac{1}{N} \sum_{j=1}^N \|T^j x\| = 0.
$$
For every $\delta > 0$, since
$$
\frac{\card(\{1 \leq j \leq N : \|T^jx\| \geq \delta\})}{N}
  \leq \frac{1}{N} \sum_{j=1}^N \frac{\|T^jx\|}{\delta}
     = \frac{1}{\delta} \frac{1}{N} \sum_{j=1}^N \|T^jx\|,
$$
we have that
$$
\ldens(\{n \geq 0 : \|T^n x\| \geq \delta\}) = 0.
$$
This contradicts Claim 10 in \cite{M}. Hence, $T$ is not mean Li-Yorke
chaotic.
\end{example}

\begin{example}\label{Ex3}
There are frequently hypercyclic operators that are not mean Li-Yorke chaotic.

\smallskip
Indeed, Bayart and Ruzsa \cite[Section 6]{BR} proved that there exists a
frequently hypercyclic operator (thus this operator is DC$2\frac{1}{2}$ by
\cite[Theorem 13]{BBPW}) such that the orbit of no $x \ne 0$ is
distributionally near of 0. Hence, it is not DC1 and not mean Li-Yorke
chaotic \cite[Proposition 20(a)]{BBPW}.
\end{example}

\begin{example}\label{Ex4}
There are distributionally chaotic operators that are not mean Li-Yorke
chaotic.

\smallskip
We recall the following result that was obtained in \cite{BBPW}:
\textit{ If $X = c_0(\N)$ ($X = c_0(\Z)$) or $X = \ell^p(\N)$ ($X = \ell^p(\Z)$)
for some $1 \leq p \leq \infty$, then there exists an (invertible) operator
$T \in L(X)$ which is distributionally chaotic and satisfies}
$$
\lim_{N \to \infty} \frac{1}{N} \sum_{j=1}^N \|T^jx\| = \infty
\ \ \mbox{ for all } x \in X \backslash \{0\}.
$$

Since these operators do not have an absolutely mean irregular vector,
Theorem~\ref{MLY} guarantees that they are not mean Li-Yorke chaotic.
Since DC1 and DC2 are equivalent for operators on Banach spaces
\cite[Theorem~2]{BBPW}, this implies that the notions of mean Li-Yorke
chaos and DC2 are not equivalent for operators on Banach spaces, contrary
to what happens in the context of topological dynamics on compact metric
spaces~\cite{D}.
\end{example}

\begin{remark}
The above examples are Li-Yorke chaotic, but not mean Li-Yorke chaotic.
\end{remark}

We recall that an operator $T$ on a separable Banach space $X$ is said to
satisfy the {\em Frequent Hypercyclicity Criterion} (FHC) if there exist
a dense subset $X_0$ of $X$ and a map $S : X_0 \to X_0$ such that,
for any $x \in X_0$,
\begin{itemize}
\item $\sum_{n=0}^\infty T^nx$ converges unconditionally,
\item $\sum_{n=0}^\infty S^nx$ converges unconditionally,
\item $TSx = x$.
\end{itemize}

If $T$ satisfies this criterion, then $T$ is frequently hypercyclic,
Devaney chaotic, mixing and distributionally chaotic \cite{BBMP,BoGE07}.
Since we have just seen that there are examples of frequently hypercyclic
operators, Devaney chaotic operators, mixing operators
and distributionally chaotic operators that are not mean Li-Yorke chaotic,
it is natural to ask if the Frequent Hypercyclicity Criterion implies mean
Li-Yorke chaos. Let us now see that the answer is yes.

\begin{proposition}\label{FHC}
If $T \in L(X)$ satisfies the Frequent Hypercyclicity Criterion, then $T$
has a residual set of absolutely mean irregular vectors.
\end{proposition}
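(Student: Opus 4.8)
The plan is to exhibit the set of absolutely mean irregular vectors as the intersection of two residual sets, in the spirit of the proofs of Theorems~\ref{ACB} and~\ref{MLY}. Set
$$
A := \Big\{y \in X : \sup_{N \in \N} \frac{1}{N}\sum_{j=1}^N \|T^j y\| = \infty\Big\}
\ \ \ \mbox{and} \ \ \
B := \Big\{y \in X : \inf_{N \in \N} \frac{1}{N}\sum_{j=1}^N \|T^j y\| = 0\Big\}.
$$
As in those proofs, every vector of $A \cap B$ is absolutely mean irregular (here $\sup = \infty$ forces $\limsup = \infty$, and $\inf = 0$ forces $\liminf = 0$, using that the orbits in question are unbounded), so it suffices to show that $A$ and $B$ are both residual in $X$; the conclusion then follows from the Baire category theorem.

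The set $B$ is easy. For $x \in X_0$, convergence of $\sum_{n=0}^\infty T^n x$ forces $T^n x \to 0$, whence $\frac{1}{N}\sum_{j=1}^N \|T^j x\| \to 0$ by Ces\`aro summation; thus $X_0 \subseteq B$, so $B$ is dense. Moreover $B = \bigcap_{k} \bigcup_{N} \big\{y : \frac{1}{N}\sum_{j=1}^N \|T^j y\| < \frac{1}{k}\big\}$ is a $G_\delta$ set, exactly as in the earlier proofs, and therefore $B$ is residual.

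The main step is the residuality of $A$, which by Theorem~\ref{ACB} is equivalent to $T$ not being absolutely Ces\`aro bounded. Here I would use the facts already recorded above: the Frequent Hypercyclicity Criterion makes $T$ distributionally chaotic, so by \cite{BBMP} it possesses a distributionally irregular vector $y$; in particular $y$ has a distributionally unbounded orbit, i.e.\ there is a set $B_0 \subseteq \N$ with $\udens(B_0) = 1$ and $\|T^n y\| \to \infty$ along $B_0$. A short density estimate then yields $\sup_{N} \frac{1}{N}\sum_{j=1}^N \|T^j y\| = \infty$: for fixed $R > 0$, along times $N_k \to \infty$ witnessing $\udens(B_0) = 1$ one discards the boundedly many indices of $B_0$ below the level where $\|T^n y\| > R$ and bounds the average from below by $R$ times a proportion tending to $1$. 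Hence $T$ is not absolutely Ces\`aro bounded, so $A$ is residual by Theorem~\ref{ACB}, completing the argument.

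The crux is precisely this last step, since the FHC hypothesis says nothing directly about averages of norms. Passing through distributional chaos and a distributionally unbounded orbit is what converts the available density information into an unbounded Ces\`aro average; a self-contained alternative would construct such a vector directly from the summability data $(X_0, S)$, but this route is shorter given the results already at hand.
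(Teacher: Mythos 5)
Your proof is correct, and it rests on the same two consequences of the Frequent Hypercyclicity Criterion that the paper extracts: the set $X_0$ is dense and each of its vectors has orbit tending to zero (so your set $B$ is a dense $G_\delta$, hence residual), and $T$ is distributionally chaotic. The difference lies in how the main step -- residuality of $A$, i.e.\ failure of absolute Ces\`aro boundedness -- is handled. The paper's proof is a two-line reduction: it records these two facts and then invokes \cite[Theorem~27]{BBPW}, whose content is precisely that they imply a residual set of absolutely mean irregular vectors. You instead re-derive the needed portion of that cited theorem inside the paper's own toolkit: you pass from distributional chaos to a distributionally irregular vector via the characterization from \cite{BBMP} recalled in Section~2, and then carry out an explicit (and correct) density estimate showing that a distributionally unbounded orbit has unbounded Ces\`aro means, which exhibits a witness in $A$ so that Theorem~\ref{ACB} yields residuality; the Baire category theorem then finishes. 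What your route buys is self-containedness and an explicit record of the quantitative step converting upper-density-one information into unbounded averages -- exactly the step hidden inside the citation; what the paper's route buys is brevity and the isolation of a reusable intermediate statement (later subsumed by Corollary~\ref{DC1}(a) via Theorem~\ref{AdditionalCondition}). One small remark: your parenthetical justification that $A \cap B$ consists of absolutely mean irregular vectors is right, and invoking unboundedness is indeed what rules out the degenerate case where the infimum defining $B$ is attained at some finite $N$ (that would force $Ty = 0$, making all the averages zero and contradicting membership in $A$).
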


\begin{proof}
If $T$ satisfies the Frequent Hypercyclicity Criterion, then the set
$$
\Big\{x \in X : \lim_{n \to \infty} \|T^nx\| = 0\Big\}
$$
is dense in $X$ and $T$ is distributionally chaotic. Hence, we can
apply \cite[Theorem 27]{BBPW}.
\end{proof}

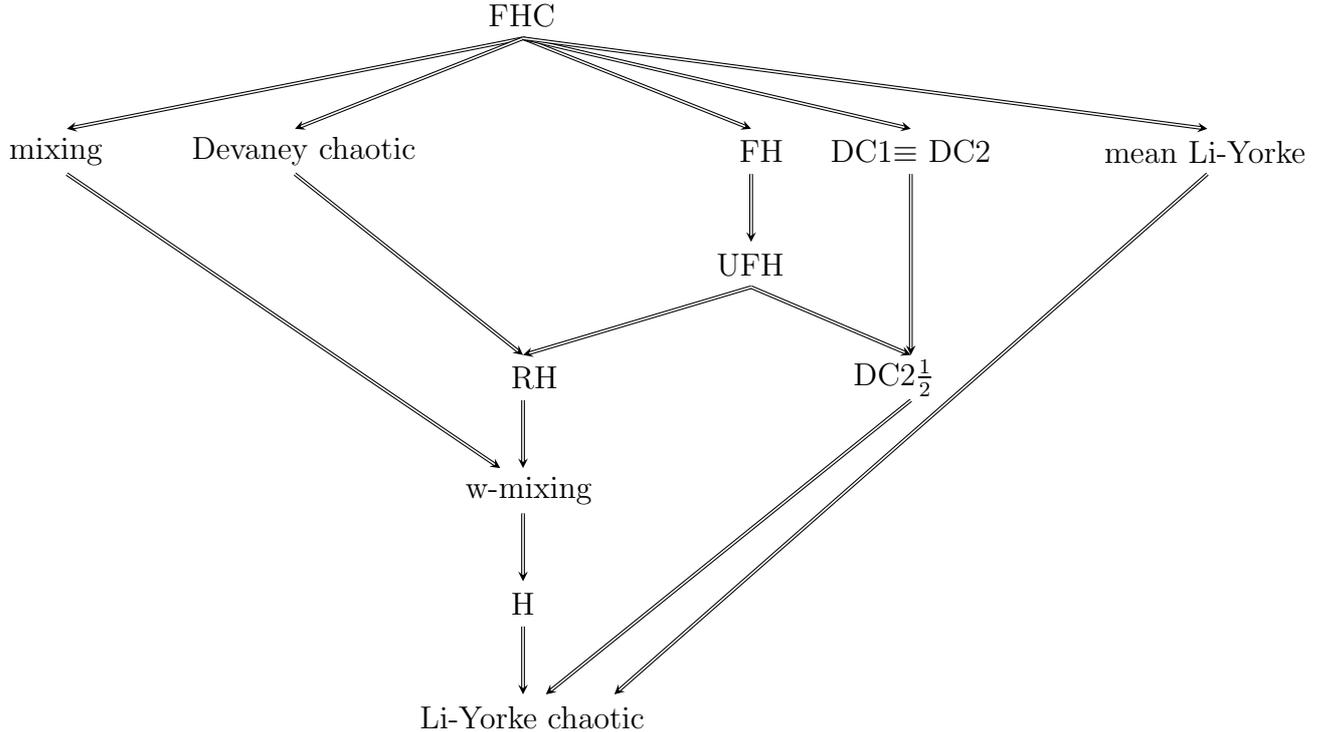
\begin{figure}[h]
\begin{tikzpicture}[scale=0.3,>=stealth]
 \node[right] at (18,21) {FHC};
  \node[right] at (-3,15) {mixing};
 \node[right] at (5,15) {Devaney chaotic};
  \node[right] at (29,15) {FH};
  \node[right] at (33,15) {DC1$\equiv$ DC2};
  \node[right] at (45,15) {mean Li-Yorke};
  \node[right] at (28,10) {UFH};
 \node[right] at (19,5) {RH};
 \node[right] at (34,5) {DC2$\frac{1}{2}$};
  \node[right] at (17,0) {w-mixing};
 \node[right] at (19,-5) {H};
  \node[right] at (15,-10) {Li-Yorke chaotic};
 \draw[double, ->] (20,20) -- (0,16);
  \draw[double, ->] (20,20) -- (10,16);
   \draw[double, ->] (20,20) -- (30,16);
  \draw[double, ->] (20,20) -- (37,16);
   \draw[double, ->] (20,20) -- (50,16);
      \draw[double, ->] (50,14) -- (24,-9);
   \draw[double, ->] (10,14) -- (20,6);
 \draw[double, ->] (0,14) -- (19,1);
  \draw[double, ->] (30,14) -- (30,11);
  \draw[double, ->] (30,9) -- (37,6);
  \draw[double, ->] (30,9) -- (20,6);
   \draw[double, ->] (37,14) -- (37,6);
\draw[double, ->] (20,4) -- (20,1); \draw[double, ->] (20,-1) --
(20,-4); \draw[double, ->] (37,4) -- (21,-9); \draw[double, ->]
(20,-6) -- (20,-9);
 \end{tikzpicture}
\caption{Implications between different definitions related with
hypercyclicity and chaos for operators on Banach spaces.}
\end{figure}

Concerning Figure~1, it is easy to construct a mean Li-Yorke
chaotic operator which is not hypercyclic. Indeed, let $T \in L(X)$ be
any mean Li-Yorke chaotic operator and consider the operator $T \oplus I$
on $X \oplus X$, where $I$ denotes the identity operator on $X$.
However, the following question remains open.

\begin{question}\label{Question1}
Is there a Banach (or Hilbert) space operator which is mean Li-Yorke chaotic
but is not distributionally chaotic?
\end{question}

Related to Question~\ref{Question1}, we have the following result from
\cite{BBPW}:
\textit{ If $X = c_0(\N)$ ($X = c_0(\Z)$) or $X = \ell^p(\N)$ ($X = \ell^p(\Z)$)
for some $1 \leq p \leq \infty$, then there exists an (invertible) operator
$T \in L(X)$ which admits an absolutely mean irregular vector whose orbit is
not distributionally unbounded.}

\smallskip
In particular, this shows that an absolutely mean irregular vector is not
necessarily distributionally irregular.  
However, we don't know if the operator admits other
vectors with distributionally unbounded orbit, or if it has a distributionally
irregular vector.


\section{Densely mean Li-Yorke chaotic operators}

\shs Let us now show that, in separable spaces, dense mean Li-Yorke chaos
is equivalent to the existence of a residual set of absolutely mean irregular
vectors.

\begin{theorem}\label{DMLY}
Assume $X$ separable. For every $T \in L(X)$, the following assertions are
equivalent:
\begin{itemize}
\item [\rm (i)]   $T$ is densely mean Li-Yorke chaotic;
\item [\rm (ii)]  $T$ has a dense set of mean Li-Yorke pairs;
\item [\rm (iii)] $T$ has a residual set of mean Li-Yorke pairs;
\item [\rm (iv)]  $T$ has a dense set of absolutely mean semi-irregular
                  vectors;
\item [\rm (v)]   $T$ has a dense set of absolutely mean irregular
                  vectors;
\item [\rm (vi)]  $T$ has a residual set of absolutely mean irregular
                  vectors.
\end{itemize}
\end{theorem}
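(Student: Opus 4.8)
The plan is to establish the cycle of implications
$(i)\Rightarrow(ii)\Rightarrow(iv)\Rightarrow(v)\Rightarrow(vi)\Rightarrow(iii)\Rightarrow(i)$,
which visits all six assertions. Throughout I would write $a_N(x):=\frac{1}{N}\sum_{j=1}^N\|T^jx\|$, a continuous seminorm in $x$, and exploit the translation trick already used in Theorem~\ref{MLY}: a pair $(x,y)$ is a mean Li-Yorke pair if and only if $u:=x-y$ is absolutely mean semi-irregular, since $\|T^jx-T^jy\|=\|T^ju\|$. Equivalently, letting $\phi\colon X\times X\to X$, $\phi(x,y):=x-y$, the set $P$ of mean Li-Yorke pairs equals $\phi^{-1}(\mathcal J_s)$, where $\mathcal J_s$ is the set of absolutely mean semi-irregular vectors and $\mathcal J\subseteq\mathcal J_s$ denotes the absolutely mean irregular vectors. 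The key soft fact is that $\phi$ is a continuous, surjective linear map, hence open by the open mapping theorem; so it carries dense sets to dense sets and pulls residual sets back to residual sets.

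Most steps are then purely topological. For $(i)\Rightarrow(ii)$, if $S$ is a dense uncountable mean Li-Yorke set then $S\times S\setminus\Delta\subseteq P$ is dense in $X\times X$ (density of $S$ lets one pick two distinct approximants in any basic box), so $P$ is dense. For $(ii)\Rightarrow(iv)$, density of $P=\phi^{-1}(\mathcal J_s)$ forces $\phi(P)=\mathcal J_s$ to be dense, as $\phi$ is a continuous surjection. The implication $(iv)\Rightarrow(v)$ is immediate from Theorem~\ref{Semi-Irregular}: $\mathcal J$ is dense in $\mathcal J_s$, and $\mathcal J_s$ is dense in $X$, so $\mathcal J$ is dense in $X$. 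For $(vi)\Rightarrow(iii)$, note first that $\mathcal J=-\mathcal J$ (because $\|T^j(-x)\|=\|T^jx\|$), so $\phi^{-1}(\mathcal J)$ is symmetric; and since $\phi$ is an open surjection, the preimage of the residual set $\mathcal J$ is residual in $X\times X$, and it is contained in $P$, giving a residual set of mean Li-Yorke pairs.

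The analytic content sits in $(v)\Rightarrow(vi)$, and it is lighter than it looks once one observes that $\mathcal J$ is a $G_\delta$ set. Indeed, exactly as in the proofs of Theorems~\ref{ACB} and~\ref{MLY}, one writes
\[
\mathcal J=\Big\{x:\textstyle\sup_N a_N(x)=\infty\Big\}\cap\Big\{x:\textstyle\inf_N a_N(x)=0\Big\},
\]
and both factors are $G_\delta$ because each $a_N$ is continuous, so the sets $\{a_N>k\}$ and $\{a_N<1/k\}$ are open. Here I use that $\sup_N a_N=\infty$ is the same as $\limsup_N a_N=\infty$, and that on the set $\{\sup_N a_N=\infty\}$ (where necessarily $Tx\neq 0$) the condition $\inf_N a_N=0$ coincides with $\liminf_N a_N=0$. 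Since $X$ is complete it is a Baire space, and a dense $G_\delta$ set is automatically residual; thus $(v)$ (that $\mathcal J$ is dense) upgrades to $(vi)$ (that $\mathcal J$ is residual). Note this step uses completeness but not separability.

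The remaining step $(iii)\Rightarrow(i)$ is where separability is essential and is the step I expect to be the main obstacle, since it requires turning a residual family of \emph{pairs} into a single large scrambled set. Here I would invoke Mycielski's theorem: a separable Banach space is a perfect Polish space (perfect because $X\neq\{0\}$, as $T$ admits a mean Li-Yorke pair), and the set $\phi^{-1}(\mathcal J)\subseteq P$ is residual and symmetric in $X\times X$. Mycielski's theorem then yields a dense subset $K$ of $X$—in fact a countable union of Cantor sets, hence uncountable—with $K\times K\setminus\Delta\subseteq\phi^{-1}(\mathcal J)$. Every pair of distinct points of $K$ has an absolutely mean irregular difference and is therefore a mean Li-Yorke pair, so $K$ is a dense mean Li-Yorke set and $T$ is densely mean Li-Yorke chaotic. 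The only delicate points to check are the hypotheses and measurability conditions needed to apply Mycielski's theorem, and the verification that the resulting Mycielski set is genuinely dense and uncountable.
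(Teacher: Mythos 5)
Your proof is correct, and its skeleton for the ``soft'' equivalences matches the paper's: the translation trick via $\phi(x,y)=x-y$ is exactly how the paper proves (ii)$\Leftrightarrow$(iv), the step (iv)$\Leftrightarrow$(v) is Theorem~\ref{Semi-Irregular} in both treatments, and pulling a residual set back through the open surjection $\phi$ is the paper's proof of (vi)$\Rightarrow$(iii) phrased abstractly; your (v)$\Rightarrow$(vi) is even slightly more economical, since observing that the set $\mathcal{J}$ of absolutely mean irregular vectors is a $G_\delta$ (hence residual as soon as it is dense) bypasses the paper's appeal to Theorem~\ref{ACB}. The genuine divergence is in the implication that produces the uncountable dense mean Li-Yorke set. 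The paper proves (vi)$\Rightarrow$(i) by an explicit construction: it inductively chooses $D$-linearly independent absolutely mean irregular vectors ($D=\Q$ or $\Q+i\Q$) near a dense sequence so that all nonzero $D$-linear combinations remain irregular, and then enlarges the resulting dense countable set to an uncountable one via a Zorn's lemma argument that manufactures an uncountable $D$-subspace $H$ of $\K$ inside a residual set of scalars, yielding the scrambled set $\{\alpha x_1:\alpha\in H\}+N$. You instead invoke Mycielski's theorem on a residual relation in $X\times X$. Your route is shorter, is the standard device in topological dynamics, and yields a structurally stronger scrambled set (a dense countable union of Cantor sets); the paper's route is elementary and self-contained, uses no descriptive set theory, and produces a scrambled set with partial linear structure.

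Two points in your last step should be tightened. First, in (iii)$\Rightarrow$(i) you assert that $\phi^{-1}(\mathcal{J})$ is residual, but hypothesis (iii) gives only that $P=\phi^{-1}(\mathcal{J}_s)$ is residual; to justify your claim you must route through the already-proven chain ((iii)$\Rightarrow$(ii) is immediate since $X\times X$ is Baire, then (ii)$\Rightarrow$(iv)$\Rightarrow$(v)$\Rightarrow$(vi), then pull $\mathcal{J}$ back). Simpler still: apply Mycielski directly to the residual set $P$ itself; the resulting dense Mycielski set already has all its distinct pairs being mean Li-Yorke pairs, which is (i) at once. Second, be sure to cite a version of Mycielski's theorem valid for perfect Polish (not necessarily compact) spaces and with the dense-Mycielski-set conclusion; Mycielski's original theorem does cover complete, separable, dense-in-itself metric spaces, but the compact-space formulation frequently quoted in the dynamics literature would not apply verbatim to a Banach space.
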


\begin{proof}
(ii) $\Leftrightarrow$ (iv): It follows easily from the fact that $(x,y)$
is a mean Li-Yorke pair for $T$ if and only if $x-y$ is an absolutely mean
semi-irregular vector for $T$.

\smallskip
\noindent (iv) $\Leftrightarrow$ (v): It follows from
Theorem~\ref{Semi-Irregular}.

\smallskip
\noindent (v) $\Leftrightarrow$ (vi): The set $R$ of all absolutely mean
irregular vectors for $T$ is the intersection of the sets
$$
A:= \Big\{x \in X : \sup_{N \in \N} \frac{1}{N} \sum_{j=1}^N \|T^j x\|
                    = \infty \Big\}
\ \mbox{  and } \
B:= \Big\{x \in X : \inf_{N \in \N} \frac{1}{N} \sum_{j=1}^N \|T^j x\|
                    = 0 \Big\}.
$$
It follows from Theorem~\ref{ACB} that $A$ is residual in $X$ whenever it is
nonempty. And we saw in the proof of Theorem~\ref{MLY} that $B$ is residual
in $X$ whenever it is dense in $X$. Thus, $R$ is residual in $X$ whenever
it is dense in $X$.

\smallskip
\noindent (vi) $\Rightarrow$ (iii): If $(U_j)$ is a sequence of dense open
sets in $X$ such that every vector in $\bigcap U_j$ is absolutely mean
irregular for $T$, then the sets
$V_j := \{(x,y) \in X \times X : x - y \in U_j\}$
are open and dense in $X \times X$, and every point in $\bigcap V_j$ is a
mean Li-Yorke pair for $T$.

\smallskip
\noindent The implications (iii) $\Rightarrow$ (ii) and (i) $\Rightarrow$ (ii) are obvious.

\smallskip
\noindent (vi) $\Rightarrow$ (i): Let $R$ be the set of all absolutely
mean irregular vectors for $T$ and let $(y_j)$ be a dense sequence in $X$.
Let $D := \Q$ or $\Q + i\Q$, depending on whether the scalar field $\K$
is $\R$ or $\C$, respectively. Since $R$ is residual in $X$, we can choose
inductively linearly independent vectors $x_1,x_2,x_3,\ldots \in X$
such that $x_1 \in B(y_1;1) \cap R$ and
$$
x_{n+1} \in B\Big(y_{n+1};\frac{1}{n+1}\Big) \cap
  \bigcap_{(\alpha_1,\ldots,\alpha_n) \in D^n}
  (\alpha_1 x_1 + \cdots + \alpha_n x_n + R) \ \ \ (n \in \N).
$$
Hence,
$$
M := \{\alpha_1 x_1 + \cdots + \alpha_m x_m : m \geq 1 \mbox{  and }
       \alpha_1,\ldots,\alpha_m \in D\}
$$
is a dense $D$-vector subspace of $X$ consisting (up to $0$) of absolutely
mean irregular vectors for $T$. In particular, $M$ is a dense mean Li-Yorke
set for $T$. Since $M$ is countable, we need to enlarge $M$ in order to
obtain an uncountable dense mean Li-Yorke set for $T$. Let
$$
N := \{\alpha_2 x_2 + \cdots + \alpha_m x_m : m \geq 2 \mbox{  and }
       \alpha_2,\ldots,\alpha_m \in D\}.
$$
For each $y \in N \backslash \{0\}$, let
$
A_y:= \{\lambda \in \K : y - \lambda x_1 \mbox{  is absolutely mean irregular
        for } T\}.
$
Since $A_y$ is a $G_\delta$ set in $\K$ containing $D$, $A_y$ is residual
in $\K$. Thus,
$
A := \bigcap_{y \in N \backslash \{0\}} A_y
$
is also a residual set in $\K$ containing $D$. By Zorn's Lemma, there is
a maximal $D$-vector subspace $H$ of $\K$ such that $D \subset H \subset A$.
If $H$ were countable, then
$$
B:= \bigcap_{\beta \in D \backslash \{0\}} \bigcap_{\alpha \in H}
    \beta (\alpha + A)
$$
would be residual in $\K$. By choosing $\gamma \in B \backslash H$,
we would have that $H':= H + \{\beta \gamma : \beta \in D\}$ is a $D$-vector
subspace of $\K$ satisfying $D \subset H' \subset A$ and $H \subsetneq H'$,
which would contradict the maximality of $H$. Thus,
$M' := \{\alpha x_1 : \alpha \in H\} + N$
is the uncountable mean Li-Yorke set for $T$ we were looking for.
\end{proof}

\begin{remark}
Note that in the previous theorem the separability of $X$ was used only
in the proof that (vi) $\Rightarrow$ (i). The equivalences
$$
\mbox{ (ii)} \Leftrightarrow \mbox{ (iii)} \Leftrightarrow \mbox{ (iv)}
\Leftrightarrow \mbox{ (v)} \Leftrightarrow \mbox{ (vi)}
$$
are valid for any Banach space.
\end{remark}

In view of Theorem~\ref{MLY}, it is true that an operator $T \in L(X)$
is mean Li-Yorke chaotic if and only if there is a mean Li-Yorke set for $T$
which is a one-dimensional subspace of $X$. Nevertheless, we shall now show
that the chaotic behaviour always occurs in a much larger subspace of $X$.

\begin{theorem}\label{ID}
Every mean Li-Yorke chaotic operator $T \in L(X)$ is densely mean Li-Yorke
chaotic on some infinite-dimensional closed $T$-invariant subspace of $X$.
\end{theorem}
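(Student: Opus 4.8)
The plan is to reduce the statement entirely to the two structural theorems already proved, namely Theorem~\ref{MLY} and Theorem~\ref{DMLY}, so that almost no new computation is needed. Since $T$ is mean Li-Yorke chaotic, the implication (i)$\Rightarrow$(v) of Theorem~\ref{MLY} hands me an infinite-dimensional closed $T$-invariant subspace $Y$ such that the restriction $S := T|_Y$ has a residual set of absolutely mean irregular vectors in $Y$. What remains is to upgrade the property ``$S$ has a residual set of absolutely mean irregular vectors'' into the property ``$S$ is densely mean Li-Yorke chaotic'', and this is exactly the implication (vi)$\Rightarrow$(i) of Theorem~\ref{DMLY}. Applying that theorem to $S \in L(Y)$ then yields that $T$ is densely mean Li-Yorke chaotic on $Y$, which is what we want.

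The only genuine point to check is that Theorem~\ref{DMLY} applies, since it is stated under the hypothesis that the ambient space is separable; indeed, the remark following Theorem~\ref{DMLY} records that separability is used precisely in the step (vi)$\Rightarrow$(i). So the main (and essentially only) thing I would verify is that the subspace $Y$ produced by Theorem~\ref{MLY} is separable. Here I would inspect its construction: in the proof of (i)$\Rightarrow$(v) of Theorem~\ref{MLY}, one takes $Y := \ov{\spa}(\Orb(u,T))$ for a single vector $u = x - y$ arising from a mean Li-Yorke pair. The orbit $\Orb(u,T) = \{T^n u : n \geq 0\}$ is countable, hence its linear span over $\Q$ (or $\Q + i\Q$, depending on the scalar field) is a countable dense subset of $Y$, so $Y$ is separable.

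With separability established, I apply Theorem~\ref{DMLY} to the separable Banach space $Y$ and the operator $S = T|_Y$: condition (vi) of that theorem holds, so condition (i) holds as well, i.e.\ $S$ is densely mean Li-Yorke chaotic. Since $Y$ is an infinite-dimensional closed $T$-invariant subspace of $X$, this is precisely the assertion that $T$ is densely mean Li-Yorke chaotic on $Y$, completing the argument. There is no serious obstacle beyond recognizing the separability of $Y$; the substance of the result is already carried by Theorems~\ref{MLY} and~\ref{DMLY}, and this theorem is essentially their combination.
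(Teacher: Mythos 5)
Your proposal is correct and follows exactly the paper's own route: apply Theorem~\ref{MLY}(i)$\Rightarrow$(v) to get the subspace $Y$, observe that $Y=\ov{\spa}(\Orb(u,T))$ is separable, and then invoke Theorem~\ref{DMLY}(vi)$\Rightarrow$(i) on $T|_Y$. In fact, your explicit verification of separability (the countable rational span of the orbit is dense in $Y$) spells out a detail the paper's proof only asserts in passing.
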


\begin{proof}
By Theorem~\ref{MLY}, the restriction of $T$ to a certain
infinite-dimensional closed $T$-invariant subspace $Y$ of $X$ has a
residual set of absolutely mean irregular vectors. The proof of
Theorem~\ref{MLY} actually constructs a separable such $Y$.
Hence, we can apply Theorem~\ref{DMLY} and conclude that $T$ is
densely mean Li-Yorke chaotic on $Y$.
\end{proof}

\begin{definition}\label{DMLYCCDef}
We say that $T \in L(X)$ satisfies the \emph{Dense Mean Li-Yorke Chaos
Criterion} (DMLYCC) if there exists a \emph{dense} subset $X_0$ of $X$
with properties~(a) and~(b) of~Definition~\ref{MLYCCDef}.
\end{definition}

\begin{theorem}\label{DMLYCC}
Assume $X$ separable. An operator $T \in L(X)$ is densely mean Li-Yorke
chaotic if and only if it satisfies the DMLYCC.
\end{theorem}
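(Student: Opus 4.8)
The plan is to derive both directions from the characterizations already in hand, namely Theorem~\ref{DMLY} and Theorem~\ref{ACB}, in the same spirit as Theorem~\ref{MLYCC} was deduced from Theorem~\ref{MLY}, but using density to bypass the passage to a proper invariant subspace.

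For the forward implication, I would start from the hypothesis that $T$ is densely mean Li-Yorke chaotic. By Theorem~\ref{DMLY} (the implication (i)$\Rightarrow$(v), which the Remark after that theorem shows is valid in any Banach space), $T$ has a dense set $X_0$ of absolutely mean irregular vectors. I claim this $X_0$ witnesses the DMLYCC. Property~(a) is immediate, since every absolutely mean irregular vector satisfies $\liminf_{N\to\infty}\frac{1}{N}\sum_{j=1}^{N}\|T^j x\|=0$. For property~(b) I would fix a single $x\in X_0$ and use $\limsup_{N\to\infty}\frac{1}{N}\sum_{j=1}^{N}\|T^j x\|=\infty$ to extract a sequence $(N_k)$ with $\frac{1}{N_k}\sum_{j=1}^{N_k}\|T^j x\|>k\|x\|$, then set $y_k:=x\in X_0\subseteq\ov{\spa(X_0)}$. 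Thus the DMLYCC holds.

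For the converse, the decisive observation is that density of $X_0$ forces $\ov{\spa(X_0)}=X$. Therefore property~(b) directly produces vectors $y_k\in X$ with $\frac{1}{N_k}\sum_{j=1}^{N_k}\|T^j y_k\|>k\|y_k\|$, which shows that $T$ itself (not merely a restriction) fails to be absolutely Cesàro bounded. By Theorem~\ref{ACB} the set $A:=\{x\in X:\sup_{N\in\N}\frac{1}{N}\sum_{j=1}^{N}\|T^j x\|=\infty\}$ is residual in $X$. On the other hand, property~(a) gives $\inf_{N\in\N}\frac{1}{N}\sum_{j=1}^{N}\|T^j x\|=0$ for every $x\in X_0$, so $X_0\subseteq B:=\{x\in X:\inf_{N\in\N}\frac{1}{N}\sum_{j=1}^{N}\|T^j x\|=0\}$; since $X_0$ is dense and $B$ is a $G_\delta$ set (as recorded in the proof of Theorem~\ref{MLY}), $B$ is residual. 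Hence $A\cap B$ is a residual set of absolutely mean irregular vectors, which is exactly condition~(vi) of Theorem~\ref{DMLY}. Invoking (vi)$\Rightarrow$(i) — the one place where separability of $X$ is used — I conclude that $T$ is densely mean Li-Yorke chaotic.

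The conceptual gain over the proof of Theorem~\ref{MLYCC} is that density removes both complications present there: in the non-dense case one must construct the invariant subspace $Y:=\ov{\{x\in X:\lim_{N\to\infty}\frac{1}{N}\sum_{j=1}^{N}\|T^j x\|=0\}}$, argue Cesàro unboundedness inside $Y$, and split on whether an absolutely mean semi-irregular vector already exists. Here density collapses $Y$ to all of $X$ and makes $B$ dense automatically, so no case split is needed. The only step requiring genuine care — and essentially the sole ``obstacle'' — is verifying that the inequality in~(b) contradicts absolute Cesàro boundedness of $T$ on the whole space rather than on some subspace, which is precisely what $\ov{\spa(X_0)}=X$ guarantees; the remaining residuality bookkeeping is routine.
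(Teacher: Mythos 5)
Your proof is correct and takes essentially the same route as the paper's: the forward direction obtains a dense set $X_0$ of absolutely mean irregular vectors from Theorem~\ref{DMLY} and observes it witnesses the DMLYCC, while the converse uses (b) to rule out absolute Ces\`aro boundedness (so $A$ is residual by Theorem~\ref{ACB}), uses density of $X_0$ and (a) to make $B$ residual, and then applies (vi)$\Rightarrow$(i) of Theorem~\ref{DMLY}. The details you add — taking the constant sequence $y_k:=x$ for property (b) and noting $\ov{\spa}(X_0)=X$ — simply make explicit what the paper's proof leaves as ``clearly.''
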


\begin{proof}
($\Rightarrow$): By Theorem~\ref{DMLY}, $T$ has a dense set $X_0$ of
absolutely mean irregular vectors. Clearly, $X_0$ satisfies properties~(a)
and~(b) of Definition~\ref{MLYCCDef}.

\smallskip
\noindent ($\Leftarrow$): Since $X_0$ is dense in $X$, condition (a)
implies that the set
$$
B:= \Big\{x \in X : \inf_{N \in \N} \frac{1}{N} \sum_{j=1}^N \|T^j x\|
                    = 0 \Big\}
$$
is residual in $X$. By (b), $T$ is not absolutely Ces\`aro bounded.
Hence, by Theorem~\ref{ACB}, the set
$$
A:= \Big\{x \in X : \sup_{N \in \N} \frac{1}{N} \sum_{j=1}^N \|T^j x\|
                    = \infty \Big\}
$$
is also residual in $X$. Thus, $T$ has a residual set of absolutely mean
irregular vectors. By Theorem~\ref{DMLY}, we conclude that $T$ is densely
mean Li-Yorke chaotic.
\end{proof}

\begin{theorem}\label{AdditionalCondition}
If $T \in L(X)$ and
$$
X_0:= \Big\{x \in X : \liminf_{N \to \infty} \frac{1}{N} \sum_{j=1}^N
                      \|T^j x\| = 0 \Big\}
$$
is dense in $X$, then the following assertions are equivalent:
\begin{itemize}
\item [\rm (i)]   $T$ is mean Li-Yorke chaotic;
\item [\rm (ii)]  $T$ has a residual set of absolutely mean irregular vectors;
\item [\rm (iii)] $T$ is not absolutely Ces\`aro bounded;
\item [\rm (iv)]  $\displaystyle \limsup_{N \to \infty} \frac{1}{N}
                  \sum_{j=1}^\infty \|T^j y_0\| > 0$ for some $y_0 \in X$.
\end{itemize}
\end{theorem}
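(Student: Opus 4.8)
The plan is to reduce everything to the machinery already developed, using the density of $X_0$ only to upgrade denseness to residuality and to convert a single ``bad'' vector into an absolutely mean semi-irregular one.

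First I would dispose of the equivalences (i) $\Leftrightarrow$ (ii) $\Leftrightarrow$ (iii). Every $x \in X_0$ has $\inf_{N} \frac{1}{N}\sum_{j=1}^N\|T^jx\| = 0$ (a vanishing $\liminf$ of nonnegative averages forces the infimum to be $0$), so $X_0$ is contained in the $G_\delta$ set
$$
B := \Big\{x\in X : \inf_{N\in\N}\frac{1}{N}\sum_{j=1}^N\|T^jx\| = 0\Big\};
$$
since $X_0$ is dense, $B$ is dense, hence residual, exactly as noted in the proof of Theorem~\ref{MLY}. By Theorem~\ref{ACB}, $T$ fails to be absolutely Ces\`aro bounded precisely when $A := \{x : \sup_N \frac{1}{N}\sum_{j=1}^N\|T^jx\| = \infty\}$ is residual; as $B$ is already residual, $A$ is residual if and only if $A\cap B$ (the set of absolutely mean irregular vectors) is residual, giving (iii) $\Leftrightarrow$ (ii). The implication (ii) $\Rightarrow$ (i) is immediate from Theorem~\ref{MLY}, since a residual (hence nonempty) set of absolutely mean irregular vectors yields mean Li-Yorke chaos, and (i) $\Rightarrow$ (iii) is exactly Corollary~\ref{NotACB}. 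This closes the cycle (i) $\Rightarrow$ (iii) $\Rightarrow$ (ii) $\Rightarrow$ (i).

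It remains to weave in (iv), read as $\limsup_{N} \frac{1}{N}\sum_{j=1}^N\|T^jy_0\| > 0$ for some $y_0 \in X$. The direction (iii) $\Rightarrow$ (iv) is trivial: if $T$ is not absolutely Ces\`aro bounded, Theorem~\ref{ACB} produces $y_0$ with $\sup_N \frac{1}{N}\sum_{j=1}^N\|T^jy_0\| = \infty$, so in particular the $\limsup$ is positive. The substantive direction is (iv) $\Rightarrow$ (iii), which I would prove by contradiction. Suppose $T$ is absolutely Ces\`aro bounded with constant $C$. The key lemma is that then $X_0 = X$: given any $y$ and $\eps > 0$, choose $x_0 \in X_0$ with $\|y - x_0\| < \eps$ and estimate
$$
\frac{1}{N}\sum_{j=1}^N\|T^jy\| \leq C\|y-x_0\| + \frac{1}{N}\sum_{j=1}^N\|T^jx_0\|;
$$
evaluating along a sequence $N_k \to \infty$ on which the last average tends to $0$ (available since $x_0\in X_0$) gives $\liminf_N \frac{1}{N}\sum_{j=1}^N\|T^jy\| \leq C\eps$, whence $\liminf = 0$ and $y \in X_0$. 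Applying this to the vector $y_0$ from (iv) yields $\liminf_N \frac{1}{N}\sum_{j=1}^N\|T^jy_0\| = 0$ together with $\limsup_N \frac{1}{N}\sum_{j=1}^N\|T^jy_0\| > 0$, so $y_0$ is absolutely mean semi-irregular. By Theorem~\ref{MLY}, $T$ is then mean Li-Yorke chaotic, and Corollary~\ref{NotACB} forces $T$ to be \emph{not} absolutely Ces\`aro bounded, contradicting our assumption. Hence (iv) $\Rightarrow$ (iii).

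The main obstacle is the (iv) $\Rightarrow$ (iii) step, and within it the approximation lemma $X_0 = X$. The delicate point is that absolute Ces\`aro boundedness controls the averages of the error term $y - x_0$ \emph{uniformly in} $N$ but says nothing about their limit, so the perturbation estimate only controls the lower average; one must therefore pass to a subsequence realizing $\liminf = 0$ for $x_0$ rather than hoping to bound the $\limsup$ of $y$ directly. Everything else is a reshuffling of Theorems~\ref{ACB} and~\ref{MLY} together with Corollary~\ref{NotACB}.
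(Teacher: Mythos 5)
Your proof is correct. The cycle (i) $\Rightarrow$ (iii) $\Rightarrow$ (ii) $\Rightarrow$ (i) is essentially the paper's own argument: (i) $\Rightarrow$ (iii) is Corollary~\ref{NotACB}, (iii) $\Rightarrow$ (ii) uses the residuality of the dense $G_\delta$ set containing $X_0$ together with Theorem~\ref{ACB}, and (ii) $\Rightarrow$ (i) is Theorem~\ref{MLY}. Where you genuinely differ is in how (iv) is tied in. The paper proves (iv) $\Rightarrow$ (i) by contradiction: if $T$ were not mean Li-Yorke chaotic, then by Theorem~\ref{MLY} there would be no absolutely mean semi-irregular vectors at all, so $X_0$ would coincide with the linear subspace $\{x \in X : \lim_{N \to \infty} \frac{1}{N}\sum_{j=1}^N \|T^jx\| = 0\}$; this subspace is dense and $G_\delta$, hence residual, and a residual subspace of a Banach space must equal $X$, contradicting (iv). You instead prove (iv) $\Rightarrow$ (iii) by contradiction through a quantitative lemma: if $T$ is absolutely Ces\`aro bounded with constant $C$, your estimate $\liminf_{N} \frac{1}{N}\sum_{j=1}^N \|T^jy\| \leq C\|y - x_0\|$ for $x_0 \in X_0$ (correctly obtained by passing to a subsequence realizing the liminf for $x_0$, the error term being controlled uniformly in $N$ by absolute Ces\`aro boundedness) shows that $X_0$ is closed, hence $X_0 = X$ by density; then the vector $y_0$ from (iv) is absolutely mean semi-irregular, and Theorem~\ref{MLY} together with Corollary~\ref{NotACB} contradicts the assumed absolute Ces\`aro boundedness. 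Both routes are sound. The paper's is shorter and purely category-theoretic, but it leans on the fact that a residual linear subspace of a Banach space is the whole space; yours replaces that Baire-type step by an explicit Lipschitz-stability fact --- under absolute Ces\`aro boundedness the set $X_0$ is closed --- which is elementary, makes the role of the hypothesis that $X_0$ is dense transparent, and is potentially reusable elsewhere (for instance, for the semigroup analogues in Section~7).
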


\begin{proof}
(i) $\Rightarrow$ (iii): It follows from Corollary~\ref{NotACB}.

\smallskip
\noindent (iii) $\Rightarrow$ (ii): Since $X_0$ is dense in $X$ by
hypothesis, it is residual in $X$. Hence, (ii) follows from Theorem~\ref{ACB}.

\smallskip
\noindent The implication (ii) $\Rightarrow$ (i) follows from Theorem~\ref{MLY}.

\smallskip
\noindent The implication (i) $\Rightarrow$ (iv) is trivial.

\smallskip
\noindent (iv) $\Rightarrow$ (i): Suppose that $T$ is not mean Li-Yorke
chaotic. By Theorem~\ref{MLY}, there is no absolutely mean semi-irregular
vector for $T$, and so
$$
X_0 = \Big\{x \in X : \lim_{N \to \infty} \frac{1}{N} \sum_{j=1}^N
                      \|T^j x\| = 0 \Big\}.
$$
Thus, $X_0$ is a residual subspace of $X$, which implies that $X_0 = X$
and contradicts (iv).
\end{proof}

\begin{remark}
In the case that the space $X$ is separable, Theorem~\ref{DMLY} shows that
condition (ii) in the above theorem can be replaced by
\begin{itemize}
\item [\rm (ii')] $T$ is densely mean Li-Yorke chaotic.
\end{itemize}
\end{remark}

As an immediate consequence of the previous theorem, we have the following
dichotomy for unilateral weighted backward shifts on Banach sequence spaces.

\begin{corollary}
Let $X$ be a Banach sequence space in which $(e_n)_{n \in \N}$ is a basis
\cite[Section 4.1]{GEP11}. Suppose that the unilateral weighted backward
shift
$$
B_w(x_1,x_2,x_3,\ldots):= (w_2x_2,w_3x_3,w_4x_4,\ldots)
$$
is an operator on $X$. Then either
\begin{itemize}
\item [\rm (a)] $B_w$ is mean Li-Yorke chaotic, or
\item [\rm (b)] $B_w$ is absolutely Ces\`aro bounded.
\end{itemize}
\end{corollary}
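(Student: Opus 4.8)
The plan is to deduce this dichotomy directly from Theorem~\ref{AdditionalCondition}. The sole hypothesis of that theorem is that the set
$$
X_0 := \Big\{x \in X : \liminf_{N \to \infty} \frac{1}{N} \sum_{j=1}^N \|(B_w)^j x\| = 0 \Big\}
$$
be dense in $X$; once this is verified, the equivalence (i) $\Leftrightarrow$ (iii) of that theorem reads exactly as the asserted dichotomy, namely that $B_w$ is mean Li-Yorke chaotic if and only if $B_w$ is not absolutely Ces\`aro bounded.

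Thus the first (and essentially only) step I would carry out is to show that $X_0$ is dense. I would begin by recording the action of $B_w$ on the basis: from the defining formula one reads off $B_w e_1 = 0$ and $B_w e_n = w_n e_{n-1}$ for $n \geq 2$, so that $(B_w)^j e_n = 0$ for every $j \geq n$. Consequently, for any finitely supported vector $x \in \spa\{e_1,\ldots,e_k\}$ one has $(B_w)^j x = 0$ for all $j \geq k$. Hence the sums $\sum_{j=1}^N \|(B_w)^j x\|$ are constant once $N \geq k$, and therefore
$$
\frac{1}{N} \sum_{j=1}^N \|(B_w)^j x\| \longrightarrow 0 \quad (N \to \infty).
$$
In particular every such $x$ lies in $X_0$. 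Since $(e_n)_{n \in \N}$ is a basis of $X$, the subspace $\spa\{e_n : n \in \N\}$ is dense in $X$, and so is the larger set $X_0$.

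With the density of $X_0$ established, Theorem~\ref{AdditionalCondition} applies and finishes the argument. If $B_w$ is not absolutely Ces\`aro bounded, then assertion (iii) holds, whence (i) holds as well, so $B_w$ is mean Li-Yorke chaotic and case (a) occurs; otherwise $B_w$ is absolutely Ces\`aro bounded, which is case (b). That the two alternatives are mutually exclusive is an additional sanity check provided by Corollary~\ref{NotACB}.

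I expect no genuine obstacle. The entire content is the elementary observation that the orbit of each basis vector under a weighted backward shift reaches $0$ in finitely many steps, which forces all finitely supported vectors into $X_0$ and thereby verifies the hypothesis of Theorem~\ref{AdditionalCondition}; the dichotomy is then nothing more than the equivalence (i) $\Leftrightarrow$ (iii) already proved there. The only point to take minimal care with is the indexing of the weights in the formula $B_w e_n = w_n e_{n-1}$, but this does not affect the eventual vanishing of the orbit.
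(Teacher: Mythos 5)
Your proposal is correct and is exactly the argument the paper intends: the corollary is stated as an immediate consequence of Theorem~\ref{AdditionalCondition}, and the only detail to supply is the density of the set $X_0$, which you verify precisely as the paper would --- finitely supported vectors satisfy $(B_w)^j x = 0$ for large $j$, hence lie in $X_0$, and they are dense because $(e_n)_{n \in \N}$ is a basis. The application of the equivalence (i) $\Leftrightarrow$ (iii) then yields the dichotomy, so nothing is missing.
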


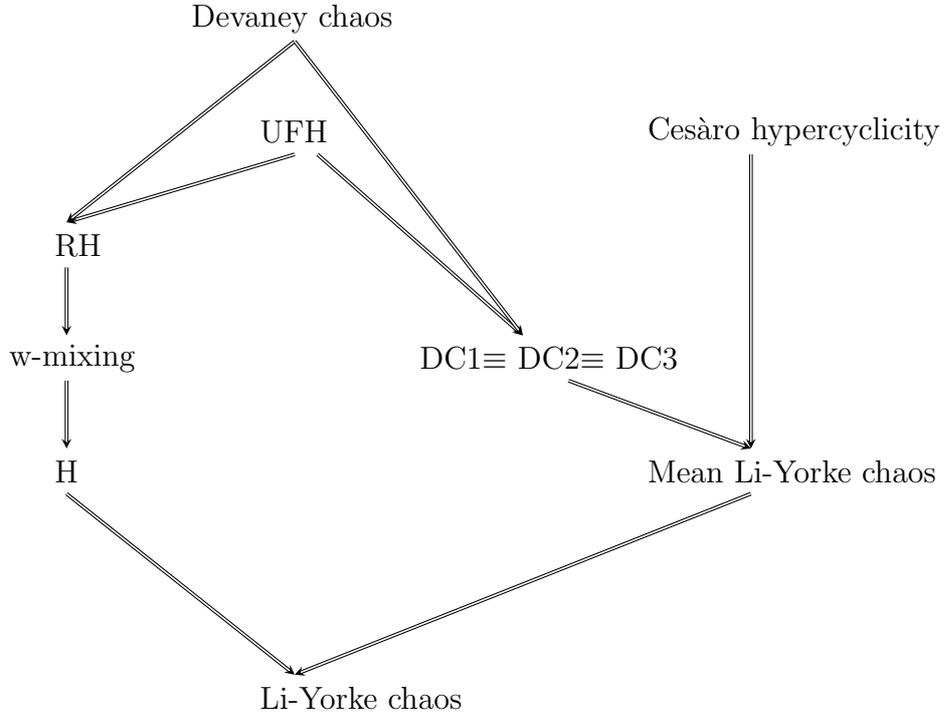
\begin{figure}[h]
\begin{tikzpicture}[scale=0.3,>=stealth]
  \node[right] at (10,20) {\textbf{ Remark:} If $\{ x: T^nx \rightarrow 0\}$ is a dense set in $X$, then: };
  \node[right] at (35,0) {DC1$\equiv $ DC2$\equiv $ DC3};
  \node[right] at (25,15) {Devaney chaos};
  \node[right] at (28,10) {UFH};
  \node[right] at (45,10) {Ces\`aro hypercyclicity};
  \node[right] at (45,-5) {Mean Li-Yorke chaos};
  \node[right] at (19,5) {RH};
  \node[right] at (17,0) {w-mixing};
 \node[right] at (19,-5) {H};
  \node[right] at (28,-15) {Li-Yorke chaos};
  \draw[double, ->] (31,9) -- (40,1);
  \draw[double, ->] (30,9) -- (20,6);
  \draw[double, ->] (50,9) -- (50,-4);
  \draw[double, ->] (42,-1) -- (50,-4);
   \draw[double, ->] (30,14) -- (20,6);
  \draw[double, ->] (20,4) -- (20,1);
  \draw[double, ->] (30,14) -- (40,1);
\draw[double, ->] (20,-1) -- (20,-4); \draw[double, ->] (20,-6)
--(30,-14);
\draw[double, ->] (50,-6) -- (30,-14);
 \end{tikzpicture}
\caption{Implications between different definitions related with
hypercyclicity and chaos for operators on Banach spaces when  $\{ x: T^nx
\rightarrow 0\}$ is a dense set in $X$.}
\end{figure}

Another consequence of Theorem~\ref{AdditionalCondition} is given in the following corollary.
Items (a) and (b) improve Theorems~27 and~28 of \cite{BBPW}, respectively.

\begin{corollary}\label{DC1}
Let $T \in L(X)$ be such that
$$
\Big\{x \in X : \liminf_{N \to \infty} \frac{1}{N} \sum_{j=1}^N \|T^j x\|
= 0\Big\}
$$
is dense in $X$. If any of the following conditions is true:
\begin{enumerate}
\item [(a)] $T$ is distributionally chaotic,
\item [(b)] $T$ is Ces\`aro hypercyclic,
\item [(c)] $X$ is a Banach space and $\displaystyle\limsup_{n \to \infty}
           \frac{\|T^n\|}{n} > 0$,
\item [(d)] $X$ is a Hilbert space and $\displaystyle \limsup_{n \to \infty}
           \frac{\|T^n\|}{n^{\frac{1}{2}}} > 0$,
\item [(e)] $T$ has an eigenvalue $\lambda$ with $|\lambda| \geq 1$,
\end{enumerate}
then there is a residual set of absolutely mean irregular vectors for $T$.
If, in addition, $X$ is separable, then $T$ is densely mean Li-Yorke chaotic.
\end{corollary}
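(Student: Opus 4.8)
The whole statement is designed to feed into Theorem~\ref{AdditionalCondition}: under the standing hypothesis that $X_0$ is dense in $X$, that theorem makes (i)--(iv) equivalent, so in order to produce a residual set of absolutely mean irregular vectors it suffices to verify, for each of the five conditions, either its assertion (iii) (that $T$ is not absolutely Ces\`aro bounded) or its assertion (iv) (that $\limsup_{N\to\infty}\frac1N\sum_{j=1}^N\|T^jy_0\|>0$ for some $y_0$). The plan is therefore to treat (a), (b), (e) by exhibiting a single vector witnessing (iv), and to treat (c), (d) by contradicting absolute Ces\`aro boundedness, i.e.\ by verifying (iii). Once (iii) or (iv) is in hand, Theorem~\ref{AdditionalCondition} yields the residual set of absolutely mean irregular vectors; and when $X$ is separable, the Remark following Theorem~\ref{AdditionalCondition} (equivalently Theorem~\ref{DMLY}, since such a residual set is exactly its condition (vi)) upgrades this to dense mean Li-Yorke chaos.

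The three elementary cases are essentially immediate. For (e), if $Tv=\lambda v$ with $v\neq0$ and $|\lambda|\ge1$, then $\frac1N\sum_{j=1}^N\|T^jv\|=\frac1N\sum_{j=1}^N|\lambda|^j\,\|v\|\ge\|v\|>0$, so (iv) holds with $y_0=v$. For (a), distributional chaos gives, by the result recalled in Section~2, a distributionally irregular vector $x$; its orbit is distributionally unbounded, so there is $B\subset\N$ with $\udens(B)=1$ along which $\|T^nx\|\to\infty$, and choosing $N_k$ with $\card(B\cap[1,N_k])/N_k\to1$ one sees that $\frac1{N_k}\sum_{j=1}^{N_k}\|T^jx\|\to\infty$, giving (iv) (indeed with $\limsup=\infty$). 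For (b), Ces\`aro hypercyclicity furnishes $x$ for which $\big(\frac1n\sum_{j=0}^{n-1}T^jx\big)_n$ is dense, hence norm-unbounded; since $\big\|\frac1n\sum_{j=0}^{n-1}T^jx\big\|\le\frac1n\sum_{j=0}^{n-1}\|T^jx\|$, the averages $\frac1n\sum_{j=0}^{n-1}\|T^jx\|$ are unbounded, so their $\limsup$ is $\infty$ and (iv) follows (passing from $\sum_{j=0}^{n-1}$ to $\sum_{j=1}^{N}$ only removes the vanishing term $\|x\|/n$ and adds a nonnegative one).

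The substance of the corollary, and the main obstacle, is in (c) and (d). The crude estimate available from absolute Ces\`aro boundedness only gives $\|T^n\|\le Cn$, i.e.\ $\limsup_n\|T^n\|/n\le C<\infty$, which is compatible with $\limsup_n\|T^n\|/n>0$ and therefore does \emph{not} by itself contradict (iii). What is needed are the sharp growth estimates for absolutely Ces\`aro bounded operators: on a Banach space such an operator satisfies $\|T^n\|/n\to0$, and on a Hilbert space it satisfies $\|T^n\|/\sqrt{n}\to0$. Granting these (they are the genuine external inputs, not derivable from the elementary bound above), hypothesis (c) forces $T$ not to be absolutely Ces\`aro bounded, and likewise (d) in the Hilbert setting; in both cases (iii) holds. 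Combining the five cases, Theorem~\ref{AdditionalCondition} yields a residual set of absolutely mean irregular vectors, and the separable addendum follows as described. The only delicate point is the correct invocation of the two growth estimates, the Hilbert refinement being the more demanding of the two; everything else reduces to the observation that a distributionally unbounded orbit, an eigenvalue of modulus $\ge1$, or unbounded Ces\`aro means each produce positive upper Ces\`aro averages of $\|T^jy_0\|$.
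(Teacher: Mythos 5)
Your proposal is correct and follows essentially the same route as the paper: the paper also feeds each of the five hypotheses into Theorem~\ref{AdditionalCondition} by establishing its condition (iv), citing Theorem~2.4 and Corollary~2.6 of \cite{BermBMP} precisely for items (c) and (d), and then gets the separable addendum via Theorem~\ref{DMLY}. The only cosmetic difference is that you route (c) and (d) through condition (iii) (failure of absolute Ces\`aro boundedness) instead of (iv); since these are equivalent under the standing density hypothesis and rely on the same external growth estimates, the argument is the same in substance.
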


\begin{proof}
Any of these conditions implies that
$$
\limsup_{N \to \infty} \frac{1}{N} \sum_{j=1}^N \|T^j y_0\| > 0
\ \mbox{  for some } y_0 \in X
$$
(for items (c) and (d) this follows from Theorem~2.4 and Corollary~2.6
of \cite{BermBMP}). Thus, the result follows from
Theorem~\ref{AdditionalCondition}.
\end{proof}

\begin{example}
There are densely mean Li-Yorke chaotic operators that are not Ces\`aro
hypercyclic.

\smallskip
Indeed, let $T$ be the weighted backward shift on $\ell^1(\N)$ defined by
$$
Te_1 = 0 \ \ \ \mbox{  and } \ \ \
Te_k = \Big(\frac{k}{k-1}\Big)e_{k-1} \ \mbox{  for } k > 1.
$$
Since $\|T^n\| = n + 1$ for all $n \in \N$, Corollary~\ref{DC1}(c) implies
that $T$ is densely mean Li-Yorke chaotic. Moreover, the equalities
$\|T^n\| = n + 1$ ($n \in \N$) also imply that $(\frac{T^nx}{n})$ is not
dense in $\ell^1(\N)$, for every $x \in \ell^1(\N)$. Thus, $T$ is not
Ces\`aro hypercyclic \cite{Leon}.
\end{example}

\begin{question}
Is the operator defined in the above example distributionally chaotic?
\end{question}


\section{Dense lineability of absolutely mean irregular vectors}

\begin{definition}
An \emph{absolutely mean irregular manifold} for $T \in L(X)$ is a vector
subspace $Y$ of $X$ such that every nonzero vector in $Y$ is absolutely
mean irregular for $T$.
\end{definition}

Such a manifold is clearly a mean Li-Yorke set for $T$. The following
dichotomy gives us a sufficient condition for the existence of a dense
absolutely mean irregular manifold.

\begin{theorem}\label{DAMIM}
Assume $X$ separable. If $T \in L(X)$ and
$$
X_0:= \Big\{x \in X : \lim _{N \to \infty} \frac{1}{N} \sum_{j=1}^N
            \|T^j x\| = 0\Big\}
$$
is dense in $X$, then either
\begin{itemize}
\item [\rm (a)] $\displaystyle \lim _{N \to \infty} \frac{1}{N} \sum_{j=1}^N
                \|T^j x\| = 0$ for every $x \in X$, or
\item [\rm (b)] $T$ admits a dense absolutely mean irregular manifold.
\end{itemize}
\end{theorem}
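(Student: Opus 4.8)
The plan is to establish the dichotomy by assuming that (a) fails and then producing the manifold required in (b). Throughout write $A_N(x):=\frac1N\sum_{j=1}^N\|T^jx\|$, and note that each $A_N$ is a seminorm on $X$. Since $A_N(x)\geq 0$, the failure of (a) means that some $y_0$ has $\lim_N A_N(y_0)\neq 0$, and hence $\limsup_N A_N(y_0)>0$. Because $X_0\subseteq\{x:\liminf_N A_N(x)=0\}$ is dense, Theorem~\ref{AdditionalCondition} applies, and its condition (iv) (with this $y_0$) gives (ii) and (iii): thus $T$ is not absolutely Ces\`aro bounded, so by Theorem~\ref{ACB} the set $A:=\{x:\sup_N A_N(x)=\infty\}$ is residual; and $B:=\{x:\inf_N A_N(x)=0\}$ is a $G_\delta$ set containing the dense $X_0$, hence residual. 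Consequently the set $R=A\cap B$ of absolutely mean irregular vectors is residual in $X$.

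The decisive reduction is to drop the density requirement. Since $A_N$ is a seminorm and $A_N(g)\to 0$ for $g\in X_0$, we have $|A_N(v+g)-A_N(v)|\leq A_N(g)\to 0$, so adding a vector of $X_0$ changes neither $\liminf_N A_N$ nor $\limsup_N A_N$. Hence it suffices to construct a single infinite-dimensional absolutely mean irregular manifold $M_0=\spa\{u_1,u_2,\ldots\}$ with $(u_n)$ linearly independent, \emph{without} any density constraint. Indeed, given such $M_0$, pick a dense sequence $(z_n)$ and, using density of $X_0$, choose $g_n\in X_0$ with $\|g_n-(z_n-u_n)\|<1/n$; then $x_n:=u_n+g_n$ satisfies $\|x_n-z_n\|<1/n$, so $Y:=\spa\{x_n\}$ is dense, and every nonzero combination $\sum\alpha_nx_n=\sum\alpha_n u_n+\sum\alpha_n g_n$ has the same $\liminf/\limsup$ as $\sum\alpha_n u_n\neq 0$ (linear independence), hence is absolutely mean irregular. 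Thus $Y$ is a dense absolutely mean irregular manifold.

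To build $M_0$ I would proceed inductively with small-norm increments and an interlaced bookkeeping of scales. Fix a sparse increasing sequence of \emph{reference scales} $s_1<s_2<\cdots$ and work inside the dense subspace $Z:=\{u:\lim_k A_{s_k}(u)=0\}\supseteq X_0$; choosing every $u_n\in Z$ forces $A_{s_k}(u_i)\to 0$ for each fixed $i$, which will deliver $\liminf=0$ for all combinations along the $s_k$. Having chosen $u_1,\ldots,u_{n-1}$, I would select an infinite set $\mathcal M_n$ of fresh \emph{blow-up scales}, disjoint from $\{s_k\}$, lying among the common small-average scales of $u_1,\ldots,u_{n-1}$, and then use the failure of absolute Ces\`aro boundedness (Theorem~\ref{ACB}) to produce $u_n\in Z$ with $A_N(u_n)\to\infty$ along $\mathcal M_n$ while $\|u_n\|$ is taken so small that $u_n$ remains negligible at the finitely many earlier blow-up scales fixed so far and at the initial reference scales. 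This is to be arranged so that an invariant is preserved: $u_1,\ldots,u_n$ always retain an infinite supply of common scales on which all their averages are small.

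The verification then runs as follows: for a nonzero $v=\sum_{i=1}^m\alpha_i u_i$ with largest nonzero index $i_0$, one has $A_{s_k}(v)\to 0$, so $\liminf_N A_N(v)=0$; while along $\mathcal M_{i_0}$ the term $|\alpha_{i_0}|A_N(u_{i_0})\to\infty$ dominates the uniformly small contributions of the lower-index vectors, giving $\limsup_N A_N(v)=\infty$. The main obstacle is precisely to make this work for \emph{all} scalar combinations at once: one must guarantee common small-average scales for every finite family (so the $\liminf$ collapses) while simultaneously ensuring that the blow-up of the top-index vector is never cancelled, even when the lower coefficients are arbitrarily large relative to $\alpha_{i_0}$. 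The difficulty is structural, because consecutive Ces\`aro averages vary slowly, $|A_{N+1}(u)-A_N(u)|\leq (N+1)^{-1}(\|T^{N+1}u\|+A_N(u))$, so smallness and blow-up can only be separated on well-spaced blocks of scales; the quantitative control coming from Theorem~\ref{ACB}, the freedom to shrink $\|u_n\|$, and the careful maintenance of the common-small-scale invariant are what I expect to make the bookkeeping close.
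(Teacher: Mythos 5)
Your first two steps are sound, and in fact the second one is essentially the paper's own trick: since each $A_N(x):=\frac1N\sum_{j=1}^N\|T^jx\|$ is a seminorm and $A_N(g)\to 0$ for $g\in X_0$, perturbation by elements of the (linear, dense) subspace $X_0$ changes neither $\liminf_N A_N$ nor $\limsup_N A_N$, so a dense absolutely mean irregular manifold follows from any countably-infinite-dimensional one (your argument here is correct, provided you note that a nonzero $y\in Y$ admits a representation $\sum\alpha_nx_n$ with not all $\alpha_n$ zero, whence $\sum\alpha_nu_n\neq 0$ by independence of the $u_n$). But the actual content of the theorem is the construction of the manifold $M_0$, and that part you do not prove: you sketch an induction and explicitly defer its ``main obstacle'' to bookkeeping you ``expect'' to close. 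This is a genuine gap, not a routine verification, and the sketch as written would not close.

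Two steps fail concretely. First, Theorem~\ref{ACB} cannot ``produce $u_n\in Z$ with $A_N(u_n)\to\infty$ along $\mathcal{M}_n$'' for an infinite set $\mathcal{M}_n$ chosen beforehand: failure of absolute Ces\`aro boundedness yields vectors with $\sup_NA_N=\infty$, but gives no control over \emph{which} scales witness the blow-up; since Ces\`aro averages obey only the one-sided estimate $A_M\geq\frac{N}{M}A_N$ for $M>N$, largeness persists multiplicatively but need not reach a prescribed sparse set, and one can perfectly well have $\sup_{N\in\mathcal{M}_n}A_N(y)<\infty$ for every $y$ while $T$ fails to be absolutely Ces\`aro bounded. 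Second, your invariant --- that $u_1,\dots,u_{n-1}$ retain infinitely many \emph{common} small-average scales outside the reference set $\{s_k\}$, where the next blow-up can be planted --- is not self-sustaining: smallness of averages, unlike largeness, does not propagate to neighbouring scales, so an already-chosen irregular $u_1$ may be large at every $N\notin\{s_k\}$, while the blow-up scales of $u_n$ cannot lie in $\{s_k\}$ because $u_n\in Z$; the induction can then reach a step with no admissible blow-up scales at all. The paper resolves exactly this tension with a device your sketch lacks: the spanning vectors are \emph{infinite} geometric sums $\sum_j\rho_{r_j}(2C)^{-r_j}x_{r_j}$ (with $C=\|T\|$) of normalized building blocks $x_m\in X_0$, each block carrying a single, inductively chosen blow-up scale $N_m$ with
$$
\frac{1}{N_m}\sum_{i=1}^{N_m}\|T^ix_m\|>m(2C)^m
\quad\text{and}\quad
\frac{1}{N_m}\sum_{i=1}^{N_m}\|T^ix_k\|<\frac1m\ \ (k<m).
$$
Membership of the blocks in $X_0$ makes all \emph{earlier} terms negligible at later scales (no alignment problem, since every large scale is eventually small for them), while the damping $(2C)^{-r_j}$ together with $\|T^i\|\leq C^i$ and the spacing $r_{j+1}\geq 1+r_j+N_{r_j+1}$ makes all \emph{later} terms negligible at earlier scales; the infinite tail then supplies infinitely many blow-up scales for every element with a bounded, infinitely-repeating coefficient pattern. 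Taking such sums $v_n$ with disjoint coefficient supports as your $u_n$, your reduction would indeed finish the proof; without some device of this kind, it does not.
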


\begin{proof}
Suppose that (a) is false and let us prove (b).
By Theorem~\ref{AdditionalCondition}, $T$ has an absolutely mean irregular
vector. Let $C:= \|T\| > 1$. Then, we can construct a sequence $(x_m)$ of
normalized vectors in $X_0$ and an increasing sequence $(N_m)$ of positive
integers so that
$$
\frac{1}{N_m} \sum_{i=1}^{N_m} \|T^ix_m\| > m(2C)^m \ \mbox{  and } \
\frac{1}{N_m} \sum_{i=1}^{N_m} \|T^ix_k\| < \frac{1}{m} \ \mbox{  for }
k = 1,\ldots,m-1.
$$
Given $\alpha,\beta \in \{0,1\}^\N$, we say that $\beta\leq \alpha $
if $\beta_i\leq \alpha_i$ for all $i \in \N$. Let $(r_j)$ be a sequence
of positive integers such that $r_{j+1} \geq 1 + r_j + N_{r_j + 1}$ for all
$j \in \N$. Let $\alpha \in \{0,1\}^\N$ be defined by $\alpha_n = 1$ if and
only if $n = r_j$ for some $j \in \N$. For each $\beta \in \{0,1\}^\N$ such
that $\beta \leq \alpha$ and $\beta$ contains an infinite number of
$1$'s, we define
$$
x_\beta := \sum_i \frac{\beta_i}{(2C)^i}\, x_i
         = \sum_j \frac{\beta_{r_j}}{(2C)^{r_j}}\, x_{r_j}.
$$
Take $k \in \N$ with $\beta_{r_k} = 1$. Since
$\displaystyle\frac{1}{N_{r_k}}\sum_{i=1}^{N_{r_k}}\|T^i x_{r_k}\| >
r_k (2C)^{r_k}$ and
$\displaystyle \frac{1}{N_{r_k}}\sum_{i=1}^{N_{r_k}}\|T^i x_s\| <
\frac{1}{r_k}$ for each $s < r_k$, we have that
\begin{align*}
 \frac{1}{N_{r_k}}\sum_{i=1}^{N_{r_k}}\norm{T^i x_\beta}
 &\ge \frac{1}{(2C)^{r_k}} \frac{1}{N_{r_k}}\sum_{i=1}^{N_{r_k}}\norm{T^i x_{r_k}}
      - \frac{1}{N_{r_k}}\sum_{i=1}^{N_{r_k}}\sum_{j \neq k} \frac{\beta_{r_j}}{(2C)^{r_j}} \norm{T^i x_{r_j}} \\
 &>   r_k - \frac{1}{r_k} \sum_{j < k} \frac{1}{(2C)^{r_j}}
      - \sum_{j > k} \frac{\norm{x_{r_j}}}{2^{r_j}} \\
 &\ge r_k - 1.
\end{align*}
On the other hand,  since
$\displaystyle \frac{1}{N_{r_k+1}}\sum_{i=1}^{N_{r_k+1}}\norm{T^i x_s} <
\frac{1}{r_k + 1}$ for each $s < r_k + 1$, then
\begin{align*}
 \frac{1}{N_{r_k+1}}\sum_{i=1}^{N_{r_k+1}}\norm{T^i x_\beta}
 &\le  \frac{1}{N_{r_k+1}}\sum_{i=1}^{N_{r_k+1}}\sum_{j \le k} \frac{\beta_{r_j}\|T^i x_{r_j}\|}{(2C)^{r_j}}
    +  \frac{1}{N_{r_k+1}}\sum_{i=1}^{N_{r_k+1}}\sum_{j > k} \frac{\beta_{r_j}\|T^i x_{r_j}\|}{(2C)^{r_j}}\\
 &\le \frac{1}{r_k + 1} \sum_{j \le k} \frac{1}{(2C)^{r_j}}
    +  \frac{1}{N_{r_k+1}}\sum_{i=1}^{N_{r_k+1}}\sum_{j > k} \frac{\|x_{r_j}\|}{2^{r_j}}\\
 &<   \frac{1}{r_k + 1}\cdot
\end{align*}
Thus, $x_\beta$ is an absolutely mean irregular for $T$.

Now, let $(w_n)$ be a dense sequence in $X_0$ and choose
$\gamma_n \in \{0,1\}^\N$ ($n \in \N$) such that each $\gamma_n$
contains an infinite number of $1$'s, $\gamma_n \leq \alpha$ for every
$n \in \N$, and the sequences $\gamma_n$ have mutually disjoint supports.
Define $v_n := \sum_i \frac{\gamma_{n,i}}{(2C)^i}\, x_i$ and
$y_n:= w_n + \frac{1}{n}\, v_n$ ($n \in \N$).
Then $Y:= \spa\{y_n : n \in \N\}$ is a dense subspace of $X$.
Moreover, if $y \in Y \backslash \{0\}$, then we can write
$y = w_0 + \sum_k \frac{\rho_k}{(2C)^k}\, x_k$,
where $w_0 \in X_0$ and the sequence of scalars $(\rho_k)$ takes
only a finite number of values (each of them infinitely many times).
As in the above proof we can show that the vector
$v := \sum_k \frac{\rho_k}{(2C)^k}\, x_k$
is absolutely mean irregular for $T$. Since $y = w_0 + v$ and $w_0 \in X_0$,
we conclude that $y$ is also absolutely mean irregular for $T$.
\end{proof}

Here is an application of the previous theorem.

\begin{corollary}\label{Shift}
If $B_w$ is a unilateral weighted backward shift on a Banach sequence space
$X$ in which $(e_n)_{n \in \N}$ is a basis, then either
\begin{itemize}
\item [\rm (a)] $\displaystyle \lim _{N \to \infty} \frac{1}{N} \sum_{j=1}^N
                \|(B_w)^j x\| = 0$ for every $x \in X$, or
\item [\rm (b)] $B_w$ admits a dense absolutely mean irregular manifold.
\end{itemize}
\end{corollary}

\begin{example}
There are densely mean Li-Yorke chaotic operators that are not hypercyclic.

\smallskip
Indeed, let $X = c_0(\N)$ or $X = \ell^p(\N)$ for some $1 \leq p < \infty$.
Consider the unilateral weighted backward shift $B_w : X \to X$ whose
weight sequence is given by
$$
w:= \Big(\frac{1}{2},2,\frac{1}{2},\frac{1}{2},2,2,\frac{1}{2},\frac{1}{2},
         \frac{1}{2},2,2,2,\ldots\Big),
$$
with successive blocks of $\frac{1}{2}$'s and 2's. Since
$\sup_{n \in \N} \prod_{j=1}^n |w_j| = 1 < \infty$,  
$B_w$ is not hypercyclic \cite[Example~4.9]{GEP11}. On the other hand,
if we define $x \in X$ by putting $\frac{1}{2^n}$ in the position of the
last $2$ in the $n^{th}$ block of $2$'s, for each $n \in \N$, and
$0$ otherwise, then
$$
\|(B_w)^nx\| \geq 1 \ \mbox{  for all } n \in \N.
$$
Hence, Corollary~\ref{Shift} guarantees that $B_w$ is densely mean Li-Yorke
chaotic.
\end{example}

\begin{remark}\label{R}
In Corollary \ref{DC1}, if $X$ is separable and we assume the stronger
property that
$$
\Big\{x \in X : \lim _{N \to \infty} \frac{1}{N} \sum_{j=1}^N
      \|T^j x\| = 0\Big\}
$$
is dense in $X$, then we can conclude that $T$ has a dense absolutely
mean irregular manifold.
\end{remark}


\section{Generically mean Li-Yorke chaotic operators}

\shs We have the following characterizations of generic mean Li-Yorke chaos.

\begin{theorem}\label{GMLY}
For every $T \in L(X)$, the following assertions are equivalent:
\begin{itemize}
\item [\rm (i)]   $T$ is generically mean Li-Yorke chaotic;
\item [\rm (ii)]  Every non-zero vector is absolutely mean semi-irregular
                  for $T$;
\item [\rm (iii)] $X$ is a mean Li-Yorke set for $T$.
\end{itemize}
\end{theorem}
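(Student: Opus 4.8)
The plan is to route all three equivalences through the elementary observation (already exploited in the proof of Theorem~\ref{DMLY}) that a pair $(x,y)$ is a mean Li-Yorke pair for $T$ if and only if the difference $x-y$ is an absolutely mean semi-irregular vector for $T$. Granting this, the equivalence (ii) $\Leftrightarrow$ (iii) is immediate: as $(x,y)$ ranges over all pairs of distinct points of $X$, the difference $x-y$ ranges over exactly the nonzero vectors of $X$, so $X$ is a mean Li-Yorke set for $T$ precisely when every nonzero vector is absolutely mean semi-irregular for $T$.

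Next, (iii) $\Rightarrow$ (i) is trivial. A nontrivial Banach space is an uncountable residual subset of itself, so if $X$ is a mean Li-Yorke set for $T$ it already serves as the uncountable residual mean Li-Yorke set demanded by the definition of generic mean Li-Yorke chaos; nothing further is needed.

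The only substantive implication is (i) $\Rightarrow$ (ii), and the key tool is a Baire category argument using the translation-invariance of the topology of $X$. First I would fix a residual set $S \subseteq X$ witnessing generic mean Li-Yorke chaos, so that $x-y$ is absolutely mean semi-irregular for $T$ whenever $x,y \in S$ are distinct. Given an arbitrary nonzero vector $u \in X$, consider the translate $S + u$. Since $x \mapsto x + u$ is a homeomorphism of $X$, the set $S + u$ is again residual, hence so is $S \cap (S + u)$; because $X$ is a Baire space, this intersection is nonempty. Choosing any $z \in S \cap (S + u)$ gives $z \in S$ and $z - u \in S$, two distinct points of $S$ whose difference is exactly $u$. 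Therefore $(z, z-u)$ is a mean Li-Yorke pair, so $u = z - (z-u)$ is absolutely mean semi-irregular for $T$, which is precisely (ii).

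I expect no real obstacle here: the only points requiring care are the standard facts that translation preserves residuality and that the intersection of two comeager sets is comeager, hence dense and nonempty in the Baire space $X$. A minor bookkeeping remark is that we tacitly assume $X \neq \{0\}$, so that $X$ is uncountable and the nonzero vector $u$ used in the last step actually exists.
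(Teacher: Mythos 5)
Your proof is correct and coincides with the paper's intended argument: the paper gives no written proof but defers to the analogous result \cite[Theorem 34]{BBMP2}, whose proof is exactly your combination of the linearity observation (a pair $(x,y)$ is mean Li-Yorke if and only if $x-y$ is absolutely mean semi-irregular) with the translation/Baire-category step $S \cap (S+u) \neq \emptyset$ for the implication (i) $\Rightarrow$ (ii). Your closing remark that one tacitly assumes $X \neq \{0\}$ is the right (and standard) caveat.
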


The proof is analogous to that of \cite[Theorem 34]{BBMP2}.

\begin{definition}
We say that an operator $T \in L(X)$ is \emph{completely absolutely mean
irregular} if every vector $x \in X \backslash \{0\}$ is absolutely mean
irregular for $T$.
\end{definition}

Thus, every completely absolutely mean irregular operator is generically
mean Li-Yorke chaotic. The converse is not true in general
(see Remark~\ref{NotCAMI}).

\smallskip
Our next goal is to construct an invertible hypercyclic operator $T$
such that both $T$ and $T^{-1}$ are completely absolutely mean irregular.
The construction is a modification of the type of examples of completely
distributionally irregular operators provided in \cite{MOP13}.
We first recall one of the main results in \cite{MOP13}.

\begin{theorem}\cite[Thm. 3.1]{MOP13}\label{cdc_f_bil}
Let $v=(v_j)_{j\in \Z}$ be a weight sequence that satisfies the following conditions:
\begin{enumerate}
\item there are sequences of integers $(n_j)_{j\in \Z}$ and $(m_j)_{j\in \Z}$ with
$n_j<m_j<n_{j+1}$, $j\in \Z$, and $M>1$ such that $Mv_{m_{-k}}\geq v_j$ for every $j\in[m_{-k},m_{k-1}]$, $k\in\N$, and if we consider
\[
S_k:=\sup \Big\{ \frac{v_j}{v_{j-1}} \ ; \ j\not\in\; ]m_{-k},m_{k-1}]\Big\} , \ \ k\in\N ,
\]
then for every $\varepsilon>0$  we find $k\in \N$ with $v_{n_k}<\varepsilon$ and
\[
S_k^{k(n_k-m_{-k})}\leq \min \left\{ M,\frac{\min\{v_i; \ m_{-k}\leq i\leq m_{k-1}\}}{v_{n_k}}\right\},
\]
\item for every $N\in \N$, there exists $k\in\N$ such that $v_j>N$, for  $k\leq j\leq Nk$.
\end{enumerate}
Then the forward shift $T\colon \ell^p(v,\Z) \to \ell^p(v,\Z)$ is completely distributionally irregular.
\end{theorem}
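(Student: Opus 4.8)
The plan is to verify directly that every nonzero $x \in \ell^p(v,\Z)$ is distributionally irregular for $T$; since complete distributional irregularity means exactly that, no density or Baire-category reduction is available and each $x$ must be handled on its own. Writing $x = \sum_{j\in\Z} x_j e_j$ and using $T^n e_j = e_{j+n}$, everything rests on the single identity (in the convention $\norm{x}^p = \sum_j \abs{x_j}^p v_j$)
$$
\norm{T^n x}^p = \sum_{j\in\Z} \abs{x_j}^p\, v_{j+n},
$$
which shows the orbit norm is governed by the interaction of the fixed coefficients $(x_j)$ with the shifted weights $(v_{j+n})_j$. I would produce sets $A,B\subset\N$ with $\udens(A)=\udens(B)=1$ such that $\norm{T^n x}\to 0$ along $A$ and $\norm{T^n x}\to\infty$ along $B$; conditions (1) and (2) are precisely what make these sets available uniformly for every $x$, by forcing $v$ to oscillate between long plateaus of large values and deep, slowly varying valleys.

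Distributional unboundedness comes from condition (2) and is the easy half, because $\norm{T^n x}^p$ is a sum of nonnegative terms that may be truncated freely. Fix $N$ and take $k$ with $v_j>N$ for $k\le j\le Nk$. Choosing a finite $F$ with $c_x:=\sum_{j\in F}\abs{x_j}^p>0$, for every $n$ with $j+n\in[k,Nk]$ for all $j\in F$ we get
$$
\norm{T^n x}^p \ge \sum_{j\in F}\abs{x_j}^p v_{j+n} \ge N\,c_x .
$$
The admissible $n$ form an interval of length of order $(N-1)k$ sitting inside $[1,Nk]$, so its relative density tends to $1$ as $N\to\infty$. Taking the union of these blocks along $N\to\infty$ yields $B$ with $\udens(B)=1$ and $\norm{T^n x}\to\infty$ on $B$; no cancellation can occur, since only nonnegative tail terms are dropped.

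Distributional nearness to $0$ comes from condition (1) and is the delicate half, because now the \emph{entire} series, infinite tail included, must be bounded above. The decisive point is that $S_k^{\,k(n_k-m_{-k})}\le M$ forces $S_k$ to be extraordinarily close to $1$, i.e.\ $v$ varies very slowly outside the central block $[m_{-k},m_{k-1}]$. Hence for indices $j$ outside that block and shifts $n$ up to the scale $k(n_k-m_{-k})$ one has the \emph{uniform} bound $v_{j+n}/v_j\le S_k^{\,n}\le M$, so the tail of $\norm{T^n x}^p=\sum_j \abs{x_j}^p v_j\,(v_{j+n}/v_j)$ is dominated by $M\sum_{j\ \text{large}}\abs{x_j}^p v_j$, the tail of the convergent series $\norm{x}^p$, and is therefore small independently of $n$. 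The finitely many remaining central indices are controlled by the valley: $v_{n_k}<\eps$ and, by slow variation (with $S_k^{\,k(n_k-m_{-k})}\le \min_i v_i/v_{n_k}$ ensuring $v_{n_k}$ is the true minimum), $v_{j+n}$ stays of order $\eps$ for $n$ in a whole block carrying the support to the valley. Combining, $\norm{T^n x}^p\lesssim \eps\,C_x + M\cdot(\text{tail of }\norm{x}^p)$ on that block; letting $\eps\to0$ and collecting the blocks gives $A$ with $\udens(A)=1$ and $\norm{T^n x}\to0$ along $A$.

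The main obstacle is precisely this uniform control of the infinite tail in the near-$0$ estimate over an \emph{entire block} of times, rather than at the single valley index $n_k$: unboundedness let us discard all but finitely many nonnegative terms, whereas nearness to $0$ demands a genuine upper bound on $\sum_j\abs{x_j}^p v_{j+n}$ across the full support of $x$, uniformly in $x$. This is exactly what the quantitative hypotheses of (1) deliver — the central bound $Mv_{m_{-k}}\ge v_j$ together with the geometric cap $S_k^{\,k(n_k-m_{-k})}\le\min\{M,\ \min_i v_i/v_{n_k}\}$ upgrade ``$v$ is small at one valley'' to ``$v$ is controllably small, relative to $v_j$, over a whole block of shifts and for every $j$ at once.'' The remaining bookkeeping is to confirm that the blocks built for $A$ and for $B$ each have relative length tending to $1$, giving $\udens(A)=\udens(B)=1$; this follows from the interlacing $n_j<m_j<n_{j+1}$ and from driving $N\to\infty$ in (2) and $\eps\to0$ in (1).
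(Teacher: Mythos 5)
You should first be aware that the paper does not prove this theorem at all: it is imported verbatim from \cite[Thm.~3.1]{MOP13}, and the only trace of its proof that the paper records is the byproduct $(I1)$. So the comparison can only be made against the argument in that reference, and your plan does follow its architecture: condition (2) yields distributional unboundedness along long blocks, and condition (1) yields long intervals $[n_k-m_{-k},\,k(n_k-m_{-k})]$ of times $l$ with $\norm{T^lx}^p<\delta$, which is exactly statement $(I1)$. Your unboundedness half is correct and complete, including the density bookkeeping and the fact that each block is finite, so the limit along the union is genuinely $\infty$.

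The near-zero half, however, has a genuine gap. Your decomposition handles (i) finitely many ``central'' indices $|j|\le J$ via the valley, and (ii) indices $j$ outside the central block via the ratio bound $v_{j+n}/v_j\le S_k^{\,n}\le M$. Two problems. First, that inequality is not what the hypotheses give when the segment from $j$ to $j+n$ meets the block: the ratios $v_i/v_{i-1}$ with $i$ inside $]m_{-k},m_{k-1}]$ are \emph{excluded} from the supremum defining $S_k$, so crossing the block costs an extra factor coming from $Mv_{m_{-k}}\ge v_i$; this is harmless (it only changes $M$ into $M^2$) but the inequality as stated is false. Second, and more seriously, the coefficients $x_j$ with $j$ \emph{inside} the central block $[m_{-k},m_{k-1}]$ but $|j|>J$ are covered by neither part of your decomposition: the block grows with $k$ while your central set must be fixed independently of $k$, and you cannot absorb them into the ``$\eps\,C_x$'' term, because condition (1) forces $\inf_k v_{n_k}=0$ and hence there exist $x\in\ell^p(v,\Z)$ with $\sum_j|x_j|^p=\infty$ (take $x=\sum_i e_{n_{k_i}}$ with $v_{n_{k_i}}\le 2^{-i}$); nor does slow variation say anything about them, since inside the block the weight may climb by unbounded factors (e.g.\ out of the interior valley $n_{k-1}$). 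The missing idea is precisely the true role of the second term of the minimum in condition (1): for $n\ge n_k-m_{-k}$ every such $j$ satisfies $j+n\ge n_k$, the whole segment from $n_k$ to $j+n$ lies outside the block and has length at most $k(n_k-m_{-k})$, whence $v_{j+n}\le v_{n_k}\,S_k^{\,k(n_k-m_{-k})}\le \min\{v_i:\ m_{-k}\le i\le m_{k-1}\}\le v_j$ (using also $S_k\ge 1$, which follows from (1) and (2) since $v$ must increase somewhere at arbitrarily large indices outside any fixed block). Thus these coefficients contribute at most the tail of $\norm{x}^p$, not an $\eps$-term. Your parenthetical gloss of that hypothesis --- ``ensuring $v_{n_k}$ is the true minimum'' --- is not its function, and without the argument just sketched a whole class of coefficients is left uncontrolled, so the proof of distributional nearness to $0$ does not close as written.
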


Actually, it was shown that, given an arbitrary non-zero vector
$x \in \ell^p(v,\Z)$, and an arbitrary $\delta > 0$, there is $k \in \N$
as big as we want such that
\[
(I1) \ \ \ \ \norm{B^lx}^p<\delta \ \mbox{ for any } \ l\in [n_k-m_{-k},k(n_k-m_{-k})].
\]

The type of examples that we will consider involve the inverse too,
and we also need to recall the following result:

\begin{corollary}\cite[Cor.\ 3.3]{MOP13}\label{cdc_b_bil2}
Let $v=(v_j)_{j\in \Z}$ be a weight sequence that satisfies the following conditions:
\begin{enumerate}
\item
there are sequences of integers $(n_j)_{j\in \Z}$ and $(m_j)_{j\in \Z}$ with
$n_j<m_j<n_{j+1}$, $j\in \Z$, and  $M>1$ such that $Mv_{m_{k}}\geq v_j$ for every $j\in[m_{-k},m_{k}]$, $k\in\N$, and if we consider
\[
s_k:=\inf \Big\{ \frac{v_j}{v_{j-1}} \ ; \ j\not\in\; ]m_{-k},m_{k}]\Big\}, \ k\in\N,
\]
then for every $\varepsilon>0$  we find $k\in \N$ with $v_{n_{-k}}<\varepsilon$ and
\[
s_k^{k(n_{-k}-m_{k})}\leq \min \left\{ M,\frac{\min\{v_i; \ m_{-k}\leq i\leq m_{k}\}}{v_{n_{-k}}}\right\},
\]
\item for every $N\in \N$, there exits $k\in\N$ such that $v_j>N$, for  $-Nk\leq j\leq -k$.
\end{enumerate}
Then the backward shift $B=T^{-1}\colon\ell^p(v,\Z) \to \ell^p(v,\Z)$
is completely distributionally irre\-gular.
\end{corollary}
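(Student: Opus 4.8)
The plan is to deduce the statement from Theorem~\ref{cdc_f_bil} by exploiting the reflection symmetry that interchanges the forward and backward shifts. Define $\tilde v = (\tilde v_j)_{j \in \Z}$ by $\tilde v_j := v_{-j}$ and let $R \colon \ell^p(v,\Z) \to \ell^p(\tilde v,\Z)$ be the reflection $(Rx)_j := x_{-j}$. A direct computation shows that $R$ is a surjective isometry and that $R\, B\, R^{-1}$ is exactly the forward shift $\tilde T$ on $\ell^p(\tilde v,\Z)$. Since complete distributional irregularity is preserved under conjugation by an isometric isomorphism (the defining density-one behaviours of the orbit of each nonzero vector are transported verbatim), it suffices to show that $\tilde T$ is completely distributionally irregular, and for this I would invoke Theorem~\ref{cdc_f_bil} applied to the weight $\tilde v$.

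The core of the argument is therefore to verify that hypotheses~(1) and~(2) of the present corollary on $v$ are nothing but hypotheses~(1) and~(2) of Theorem~\ref{cdc_f_bil} on $\tilde v$, read through the substitution $j \mapsto -j$. Condition~(2) transports cleanly: the requirement that $v_j > N$ for $-Nk \le j \le -k$ becomes $\tilde v_i > N$ for $k \le i \le Nk$. For condition~(1), reflection sends each consecutive ratio $v_j/v_{j-1}$ to the reciprocal of a consecutive ratio of $\tilde v$; consequently the infimum $s_k$ for $v$ becomes the reciprocal of the supremum $S_k$ for $\tilde v$, and the exponent $k(n_{-k}-m_k)$, which is negative, changes sign, so that the corollary's inequality $s_k^{\,k(n_{-k}-m_k)} \le \min\{\cdots\}$ turns into the theorem's inequality $S_k^{\,k(\cdot)} \le \min\{\cdots\}$ for $\tilde v$. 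In the same way the domination condition, anchored in the corollary at the right endpoint $m_k$ of the block $[m_{-k},m_k]$, becomes a domination anchored at the corresponding left endpoint after reflection, and the smallness requirement $v_{n_{-k}} < \varepsilon$ becomes $\tilde v_{-n_{-k}} < \varepsilon$.

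The step I expect to be the main obstacle is the precise choice of the reflected delimiting sequences $(\tilde n_j)$ and $(\tilde m_j)$: reflection reverses the orientation of every $n$--$m$ pair, so one cannot simply set $\tilde n_j = -n_{-j}$ and $\tilde m_j = -m_{-j}$ and expect the chain $\tilde n_j < \tilde m_j < \tilde n_{j+1}$ demanded by Theorem~\ref{cdc_f_bil} to survive alongside the smallness $\tilde v_{\tilde n_k} = v_{n_{-k}} < \varepsilon$ and the exponent inequality. Reconciling these three requirements simultaneously amounts to running the estimate~(I1) in its mirror image, with the near-zero stretch of the backward orbit now produced by the small weight $v_{n_{-k}}$ and its length controlled by the infimum-ratios $s_k$, while the unbounded stretch is supplied by condition~(2). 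Once this mirrored version of~(I1) is in place, the conclusion that every nonzero $x$ has a backward orbit that is both distributionally near $0$ and distributionally unbounded follows exactly as in the proof of Theorem~\ref{cdc_f_bil}.
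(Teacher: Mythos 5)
The paper itself offers no proof of this statement---it is quoted verbatim from \cite[Cor.~3.3]{MOP13}---so your proposal has to stand on its own. Its first half does stand: the reflection $(Rx)_j := x_{-j}$ is an isometric isomorphism of $\ell^p(v,\Z)$ onto $\ell^p(\tilde v,\Z)$ with $\tilde v_j := v_{-j}$, it conjugates $B = T^{-1}$ to the forward shift on $\ell^p(\tilde v,\Z)$, complete distributional irregularity is invariant under such a conjugacy, and condition (2) of the corollary for $v$ is literally condition (2) of Theorem~\ref{cdc_f_bil} for $\tilde v$.

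The gap is exactly the step you flag and then wave away. Saying that the remaining difficulty ``amounts to running the estimate (I1) in its mirror image'' is not a proof: it abandons the black-box application of Theorem~\ref{cdc_f_bil} and merely asserts that its proof (which is not reproduced in this paper) can be repeated backwards. Moreover, the difficulty is genuine: no relabelling of the form $\tilde m_j = -m_{c-j}$, $\tilde n_j = -n_{d-j}$ can make hypothesis (1) of the corollary coincide with hypothesis (1) of the theorem, because the corollary's central blocks $[m_{-k},m_k]$ contain $2k+1$ of the points $m_j$ while the theorem's blocks $[\tilde m_{-k},\tilde m_{k-1}]$ contain only $2k$ of the points $\tilde m_j$---a parity obstruction. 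The cure is to drop one hill/valley pair at the seam, which is legitimate since the theorem never requires its sequences to sit at actual extrema of the weight:
\[
\tilde m_j := \begin{cases} -m_{-j} & (j\le -1),\\ -m_{-j-1} & (j\ge 0),\end{cases}
\qquad\qquad
\tilde n_j := \begin{cases} -n_{1-j} & (j\le 0),\\ -n_{-j} & (j\ge 1).\end{cases}
\]
These satisfy $\tilde n_j<\tilde m_j<\tilde n_{j+1}$ (the case $j=0$ reads $n_{-1}<m_{-1}<n_1$), and for every $k\in\N$ one has $\tilde m_{-k}=-m_k$, $\tilde m_{k-1}=-m_{-k}$, $\tilde n_k=-n_{-k}$. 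Consequently $\tilde v_{\tilde m_{-k}}=v_{m_k}$, $\tilde v_{\tilde n_k}=v_{n_{-k}}$, the block $[\tilde m_{-k},\tilde m_{k-1}]$ is the reflection of $[m_{-k},m_k]$, the supremum $S_k$ computed for $\tilde v$ equals $1/s_k$ (each ratio $\tilde v_j/\tilde v_{j-1}$ equals $(v_i/v_{i-1})^{-1}$ for $i=1-j$, and $j\notin\; ]\tilde m_{-k},\tilde m_{k-1}]$ if and only if $i\notin\; ]m_{-k},m_k]$), and $k(\tilde n_k-\tilde m_{-k})=k(m_k-n_{-k})$, so that $S_k^{k(\tilde n_k-\tilde m_{-k})}=s_k^{k(n_{-k}-m_k)}$ and the two minima agree as well. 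Thus hypothesis (1) of Theorem~\ref{cdc_f_bil} for $\tilde v$, with these sequences and the same $M$, is verbatim hypothesis (1) of the corollary for $v$; the theorem then applies to $\tilde v$, and your conjugacy argument finishes the proof. With this relabelling made explicit your reduction is complete; without it, the decisive verification is missing.
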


In this case, it turns out that, given an arbitrary non-zero vector
$x \in \ell^p(v,\Z)$, and an arbitrary $\delta > 0$, there is $k \in \N$
as big as we want such that
\[
(I2) \ \ \ \ \norm{B^lx}^p<\delta \ \mbox{ for any } \ l\in [m_k-n_{-k},k(m_k-n_{-k})].
\]

These results allow us to provide examples of completely absolutely mean
irregular operators, which are a modification of Example 3.5 of \cite{MOP13}.
The sequences of integers $(n_j)_{j\in \Z}$ and $(m_j)_{j\in \Z}$ with
$n_j<m_j<n_{j+1}$, $j\in \Z$, are such that for every $k\in \Z$ we have
\[
v_{j-1}\leq v_j \mbox{ when } \ n_k<j\leq m_k, \mbox{ and } v_{j-1}\geq v_j \mbox{ when } \
m_k<j\leq n_{k+1}.
\]
In other words, the positions $v_{m_k}$ represent ``hills'' of the weight sequence, and the positions
$v_{n_k}$ are ``valleys''.

\begin{theorem}\label{t_bil_cami}
There exists a bilateral shift $T$ on $\ell^p(v,\Z)$ such that both $T$
and $T^{-1}$ are completely absolutely mean irregular, completely
distributionally irregular and hypercyclic.
\end{theorem}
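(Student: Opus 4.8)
The plan is to produce a single weight sequence $v=(v_j)_{j\in\Z}$ whose forward shift $T$ on $\ell^p(v,\Z)$ (with $B:=T^{-1}$ the associated backward shift) satisfies the hypotheses of \emph{both} Theorem~\ref{cdc_f_bil} and Corollary~\ref{cdc_b_bil2}. I would retain the hills-and-valleys profile of Example~3.5 of \cite{MOP13}, made symmetric under $j\mapsto-j$, so that the hill heights $v_{m_k},v_{m_{-k}}$ tend to $\infty$ and the valley depths $v_{n_k},v_{n_{-k}}$ tend to $0$ as $k\to\infty$; the new point is to re-tune the ascent and descent rates together with the block lengths so that conditions~(1) and~(2) of both statements hold at once. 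Once this is arranged, Theorem~\ref{cdc_f_bil} and Corollary~\ref{cdc_b_bil2} give directly that $T$ and $B$ are completely distributionally irregular and supply the orbit estimates (I1) and (I2); moreover, since the profile is symmetric, every assertion proved for $T$ transfers verbatim to $B$, and it suffices to treat $T$.

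Turning to complete absolute mean irregularity, fix $x\neq 0$, pick $n_0$ with $x_{n_0}\neq 0$, and recall $\norm{T^lx}^p=\sum_n v_{n+l}^p\,\abs{x_n}^p$. For the limit inferior I would average over the windows $[1,kL_k]$, where $L_k:=n_k-m_{-k}$: estimate (I1) forces $\norm{T^lx}<\delta^{1/p}$ for all $l$ in the tail $[L_k,kL_k]$, which therefore contributes at most $\delta^{1/p}$ to $\frac{1}{kL_k}\sum_{l=1}^{kL_k}\norm{T^lx}$, while the head $[1,L_k]$ contributes at most $\frac{1}{k}\max_{1\le l\le L_k}\norm{T^lx}$. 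The bound $Mv_{m_{-k}}\ge v_j$ of condition~(1) keeps this maximum of order $v_{m_{-k}}\norm{x}$, so the head term is $O\!\big(v_{m_{-k}}\norm{x}/k\big)$, which tends to $0$ provided the modification is chosen so that the hill heights grow sublinearly in their index; letting $k\to\infty$ and then $\delta\to0$ gives $\liminf_N\frac1N\sum_{l=1}^N\norm{T^lx}=0$. For the limit superior I would invoke condition~(2): for each $N$ it yields a block $[k',Nk']$ on which $v_j>N$, hence a time window of length comparable to $Nk'$ on which $\norm{T^lx}\ge N\abs{x_{n_0}}$, so averaging $\norm{T^lx}$ over $[1,Nk']$ is $\gtrsim N\abs{x_{n_0}}\to\infty$. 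Thus every nonzero $x$ is absolutely mean irregular for $T$, and by symmetry for $B$.

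For hypercyclicity I would check the Hypercyclicity Criterion for bilateral weighted shifts (see \cite{GEP11}): along a common subsequence of iterates the long high plateaus of condition~(2) of Theorem~\ref{cdc_f_bil} (towards $+\infty$) make the forward weight products tend to $\infty$, while the plateaus of condition~(2) of Corollary~\ref{cdc_b_bil2} (towards $-\infty$) make the backward weight products tend to $0$, on every fixed finite block of coordinates; this yields hypercyclicity of $T$, and the symmetric argument gives it for $B=T^{-1}$.

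The main obstacle is not the distributional part, which is inherited from \cite{MOP13}, but the mean estimate, and more precisely the balancing it requires. Distributional nearness to $0$ only needs the orbit small on a set of upper density $1$, whereas $\liminf$ of the Ces\`aro mean is sensitive to the actual size of the comparatively few large values lying in the head $[1,L_k]$; controlling these forces the hill heights to grow slowly (sublinearly in their index), while the distributional unboundedness behind the limit superior forces the \emph{lengths} of the high plateaus to grow. Reconciling these competing demands, and doing so symmetrically so that all four conditions of Theorem~\ref{cdc_f_bil} and Corollary~\ref{cdc_b_bil2} hold for one and the same weight $v$, is the delicate part of the construction; everything else reduces to the cited results and standard weighted-shift theory.
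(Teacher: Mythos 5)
Your handling of the Ces\`aro means is essentially the paper's own argument: average over the windows $[1,k(n_k-m_{-k})]$, control the tail by (I1), control the head by a growth bound that is sublinear in $k$, and get the infinite limsup from the plateaus of condition (2). (One repair is needed there: $\norm{T^lx}^p=\sum_n v_{n+l}^p\abs{x_n}^p$ is controlled by the \emph{ratios} $\sup_n v_{n+l}/v_n$, not by the values of $v$ on a central block, so condition (1) alone does not bound the head; the paper uses $\norm{T^lx}\le (v_{m_k}/v_{n_k})\norm{x}$ for $l\le n_k-m_{-k}$.) The genuine gap is the symmetry device on which both your ``transfers verbatim to $B$'' shortcut and your hypercyclicity argument rest: a weight with $v_{-j}=v_j$ \emph{cannot} satisfy the hypotheses of Theorem~\ref{cdc_f_bil} and Corollary~\ref{cdc_b_bil2}. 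Indeed, under symmetry the hills obey $m_{-k}=-m_k$ (origin a hill) or $m_{-k}=-m_{k-1}$ (origin a valley). In the first case $n_k-m_{-k}=n_k+m_k>m_k-n_k$, and since the climb $]n_k,m_k]$ lies outside $]m_{-k},m_{k-1}]$ we get $S_k^{k(n_k-m_{-k})}\ge (v_{m_k}/v_{n_k})^{k}$; condition (1) of Theorem~\ref{cdc_f_bil} then forces $v_{m_k}\le M^{1/k}v_{n_k}<M\eps$ along the subsequence it provides, and combined with $Mv_{m_{-k}}\ge v_j$ on $[m_{-k},m_{k-1}]$ this makes every $v_j$ on that (arbitrarily long) block smaller than $M^2\eps$, contradicting the plateaus $v_j>N$ demanded by condition (2). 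In the second case the mirror computation defeats condition (1) of Corollary~\ref{cdc_b_bil2} in the same way. This is exactly why the paper's construction is \emph{anti}symmetric: it interlaces, $m_k=-n_{-k}$ and $m_{-k}=-n_k$, so that the window length $n_k-m_{-k}=2n_k$ is negligible compared with the climb length $m_k-n_k=16k^3n_k$, which is what makes the slope conditions satisfiable at all.

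Second, your hypercyclicity argument is backwards. For an invertible bilateral shift on $\ell^p(v,\Z)$, hypercyclicity of $T$ (equivalently of $T^{-1}$) is equivalent, by Feldman's theorem (cited as Theorem 3.2 of \cite{F} in the paper), to the existence of a single increasing sequence $(j_k)$ with $v_{j_k}\to 0$ \emph{and} $v_{-j_k}\to 0$: the weight must be \emph{small} far out on both sides at common positions. The high plateaus of condition (2), where $v_j>N$, are where orbits of basis vectors are large; they produce distributional unboundedness and the infinite limsup of the means, and they work \emph{against} hypercyclicity. (The product conditions you quote are those for a weighted backward shift applied with the orientation reversed; translated back to $v$, they ask for $v$ to be large at both ends, which would in fact preclude hypercyclicity.) Since symmetry is unavailable by the previous paragraph, the valleys on one side face the \emph{hills} of the other ($v_{-n_k}=v_{m_{-k}}\to\infty$ while $v_{n_k}\to 0$), so producing common positions where both $v_j$ and $v_{-j}$ are small is precisely the delicate point of the construction. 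The paper resolves it by taking $j_k=(m_k+n_k)/2$, halfway up the gentle slope, and checking that $v_{j_k}\to 0$ and $v_{-j_k}\to 0$. Without this step (or a replacement for it), your proposal can at best yield complete absolute mean irregularity and complete distributional irregularity, but not the hypercyclicity claimed in the statement.
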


\begin{proof}
We first consider
\begin{enumerate}
\item [(a)] $n_0 = -1$, $m_0 = 1$, $n_1 = 4$, $m_{-1} = -4$, $v_{n_0} = 1$,
      $v_{m_0} = 2^{1/4}$, $v_{n_1} = 2^{-1/3}$, $v_{m_{-1}} = 2^{1/4}$, and
\item [(b)] $m_k = -n_{-k}$, $k \in \Z$, $v_{n_k} = (2k)^{-1/3}$,
      $v_{m_k} = (k+2)^{1/4}$, $v_{n_{-k}} = (2k+1)^{-1/3}$,
      $v_{m_{-k}} = (k+1)^{1/4}$, $k \in \N$, $v_i/v_{i-1} = v_j/v_{j-1}$
      if $i,j \in\; ]n_k,m_k]$, or if $i,j \in\; ]m_{k-1},n_k]$, $k\in \Z$, and
\item [(c)] $m_k-n_k > 2(m_{k-1}-n_{k-1})$, $n_{k+1}-m_k > 2(n_{k}-m_{k-1})$,
      $k \in \N$.
\end{enumerate}

We will check that the hypotheses of Theorem~\ref{cdc_f_bil} and
Corollary~\ref{cdc_b_bil2} are satisfied. Condition (b) gives
$\displaystyle \frac{\min\{v_i; \ m_{-k}\leq i\leq m_{k-1}\}}{v_{n_k}}=\frac{v_{n_{-k+1}}}{v_{n_k}}=\left(\frac{2k}{2k-1}\right)^{1/3}$ for
every $k\in\N$, and the  supremum of  the slope of $v$ outside the  interval $[m_{-k},m_{k-1}]$ is $S_k=v_j/v_{j-1}$
for any $n_k<j\leq m_k$, $k\in\N$. We set $M=2$ and, since $S_k^{m_k-n_k}=\frac{v_{m_k}}{v_{n_k}}=(2k)^{1/3}(k+2)^{1/4}$, we need that
\[
S_k^{k(n_k-m_{-k})}=((2k)^{2/3}(k+2)^{1/2})^{\frac{kn_k}{(m_k-n_k)}} \leq \frac{\min\{v_i; \ m_{-k}\leq i\leq m_{k-1}\}}{v_{n_k}}=\left(\frac{2k}{2k-1}\right)^{1/3} .
\]

This can be obtained for, e.g., $m_k = (16k^3+1)n_k$, $k \in \N$. Indeed,
for this selection and for any $k \geq 2$ (the case $k = 1$ trivially
satisfies the above inequality), we have
\[
S_k^{k(n_k-m_{-k})}<(2k(k+2))^{1/24k^2}\leq (2k)^{1/12k^2}\leq \left(\frac{2k}{2k-1}\right)^{1/3} .
\]
Thus, $T$ satisfies $(I1)$. If $l\leq n_k-m_{-k}$, then
\[
\norm{T^lx}\leq \frac{v_{m_k}}{v_{n_k}}\norm{x}\leq  (2k)^{2/3}\norm{x},
\]
which yields that, given an arbitrary non-zero vector $x \in \ell^p(v,\Z)$
and an arbitrary $\delta > 0$, there is $k \in \N$ as big as we want such
that
\[
\frac{1}{N}\sum_{j=1}^N \norm{T^lx}< \frac{(2k)^{2/3}}{k+1}  \norm{x} + \delta ,
\]
for $N=k(n_k-m_{-k})$, and we obtain that $T$ is completely absolutely mean
irregular as soon as we show that condition \emph{(2)} in Theorem~\ref{cdc_f_bil} is satisfied. For this, we notice that
\[
v_r=v_{m_k}s_k^{r-m_k} \geq (k+2)^{1/4}s_k^{2km_k}>(k+2)^{1/4}\left( \frac{2k}{2k+1}\right)^{1/3} \ \mbox{ if } \ m_k\leq r\leq (2k+1)m_k,
\]
which yields that $T$ is completely absolutely mean
irregular.

Analogously, $\displaystyle \frac{\min\{v_i; \ m_{-k}\leq i\leq m_{k}\}}{v_{n_{-k}}}=\frac{v_{n_{k}}}{v_{n_{-k}}}=\left(\frac{2k+1}{2k}\right)^{1/3}$ for
every $k\in\N$, and the  infimum of  the slope of $v$ outside the  interval $[m_{-k},m_{k}]$ is $s_k=v_j/v_{j-1}$
for any $m_k<j\leq n_{k+1}$, $k\in\N$. Again,   we set $M=2$ and, since $s_k^{m_k-n_{k+1}}=((2k+2)^{1/3}(k+2)^{1/4})$, we need that
\[
s_k^{k(n_{-k}-m_{k})}=s_k^{-2km_k}=((2k+2)^{2/3}(k+2)^{1/2})^{\frac{km_k}{(n_{k+1}-m_k)}} \leq \left(\frac{2k+1}{2k}\right)^{1/3} ,
\]
which is easily satisfied if we set, e.g., $n_{k+1}=(16k^3+1)m_k$, $k\in\N$. Thus, $B=T^{-1}$ satisfies $(I2)$. If $l\leq m_k-n_{-k}$, then
\[
\norm{B^lx}\leq \frac{v_{m_k}}{v_{n_k}}\norm{x}\leq  (2k)^{2/3}\norm{x}.
\]
As before, we obtain that $B=T^{-1}$ is completely absolutely mean irregular in case that condition \emph{(2)} in  Corollary~\ref{cdc_b_bil2} is satisfied. Indeed, we easily have that
\[
v_r>(k+1)^{1/4}\left( \frac{2k}{2k+1}\right)^{1/3} \ \mbox{ if } \ (2k+1)m_{-k}\leq r\leq m_{-k} ,
\]
which implies condition \emph{(2)}.

For the hypercyclicity of $T$, since it is invertible, it suffices to show
that there is an increasing sequence $(j_k)_k$ in $\N$ such that
$\lim_kv_{j_k} = \lim_kv_{-j_k} = 0$ (See Theorem 3.2 in \cite{F}).
Let $j_k:=(m_k+n_k)/2=(8k^3+1)n_k$. We have
\[
v_{j_k}=S_k^{j_k-n_k}v_{n_k}=\left( (2k)^{1/6}(k+2)^{1/8}\right)    \frac{1}{(2k)^{1/3}},
\]
that tends to $0$ as $k$ goes to infinity. Note that $R_k = v_j/v_{j-1}$
has the same value for any  $j \in\; ]n_{-k},m_{-k}]$, and thus for all
$k \in \N$ we have $R_k < S_k$ and
\[
v_{-j_k}=R_k^{-j_k-n_{-k}}v_{n_{-k}}<S_k^{m_k-j_k}\frac{1}{(2k+1)^{1/3}}\cdot
\]
Therefore, $\lim_kv_{j_k} = \lim_kv_{-j_k} = 0$, which concludes the
hypercyclicity of $T$.
\end{proof}

\begin{corollary}
There exists an operator $T$ on a Banach space $X$ such that the whole
space $X$ is a mean Li-Yorke set for $T$.
\end{corollary}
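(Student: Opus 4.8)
The plan is to read off the corollary directly from the operator constructed in Theorem~\ref{t_bil_cami}, combined with the characterization of generic mean Li-Yorke chaos in Theorem~\ref{GMLY}. First I would let $T$ be the bilateral shift on $\ell^p(v,\Z)$ produced by Theorem~\ref{t_bil_cami}, and set $X := \ell^p(v,\Z)$. By that theorem $T$ is completely absolutely mean irregular, which means that every $x \in X \backslash \{0\}$ satisfies both $\liminf_{N \to \infty} \frac{1}{N} \sum_{j=1}^N \|T^j x\| = 0$ and $\limsup_{N \to \infty} \frac{1}{N} \sum_{j=1}^N \|T^j x\| = \infty$.

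The key (and essentially only) observation is that an absolutely mean irregular vector is in particular absolutely mean semi-irregular: the condition $\limsup_{N \to \infty} \frac{1}{N} \sum_{j=1}^N \|T^j x\| = \infty$ trivially gives $\limsup_{N \to \infty} \frac{1}{N} \sum_{j=1}^N \|T^j x\| > 0$. Hence every nonzero vector of $X$ is absolutely mean semi-irregular for $T$, which is precisely assertion~(ii) of Theorem~\ref{GMLY}. Applying the equivalence (ii) $\Leftrightarrow$ (iii) of Theorem~\ref{GMLY} then yields that $X$ is a mean Li-Yorke set for $T$, which is the desired statement. (Equivalently, one may route the argument through the remark following the definition of complete absolute mean irregularity, which already records that such an operator is generically mean Li-Yorke chaotic, i.e.\ satisfies assertion~(i) of Theorem~\ref{GMLY}.)

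I do not expect any genuine obstacle at the level of the corollary itself: all the difficulty has been concentrated in the construction carried out in Theorem~\ref{t_bil_cami}, and the corollary is merely a repackaging of complete absolute mean irregularity through the characterization in Theorem~\ref{GMLY}. The only point worth stating explicitly is the harmless implication $\limsup = \infty \Rightarrow \limsup > 0$, which lets complete absolute mean irregularity feed into hypothesis~(ii) of Theorem~\ref{GMLY}; everything else is then automatic.
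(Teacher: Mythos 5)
Your proposal is correct and is essentially the paper's own (implicit) argument: the paper presents this corollary as an immediate consequence of Theorem~\ref{t_bil_cami}, using exactly the observation that complete absolute mean irregularity gives assertion~(ii) of Theorem~\ref{GMLY}, whose equivalence with~(iii) yields that $X$ is a mean Li-Yorke set. Nothing is missing; the trivial implication $\limsup = \infty \Rightarrow \limsup > 0$ that you single out is indeed the only gluing step required.
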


\begin{remark}
Since mean Li-Yorke chaos and DC2 are equivalent for dynamical systems
on compact metric spaces, it follows from \cite[Theorem~3.1]{F-KOS} that
the conclusion of the above corollary is not possible for such dynamical
systems.
\end{remark}

\begin{remark}\label{NotCAMI}
It is possible to modify slightly the example in Theorem~\ref{t_bil_cami}
in order to obtain $T$ such that every non-zero vector is absolutely mean
semi-irregular for both $T$ and $T^{-1}$ (thus, both operators are
generically mean Li-Yorke chaotic), but neither $T$ nor $T^{-1}$ are
completely absolutely mean irregular. To do this, the only change would be
to set $v_{m_k}=1$, $k\in\Z$. That is,
\begin{enumerate}
\item [(a)] $n_0 = -1$, $m_0 = 1$, $n_1 = 4$, $m_{-1} = -4$, $v_{n_0} = 1$,
      $v_{m_0} = 1$, $v_{n_1} = 2^{-1/3}$, $v_{m_{-1}} = 1$, and
\item [(b)] $m_k = -n_{-k}$, $k \in \Z$, $v_{n_k} = (2k)^{-1/3}$,
      $v_{m_k} = 1$, $v_{n_{-k}} = (2k+1)^{-1/3}$,
      $v_{m_{-k}} = 1$, $k \in \N$, $v_i/v_{i-1} = v_j/v_{j-1}$
      if $i,j \in\; ]n_k,m_k]$, or if $i,j \in\; ]m_{k-1},n_k]$, $k\in \Z$, and
\item [(c)] $m_k = (16k^3+1)n_k$, $n_{k+1} = (16k^3+1)m_k$,
      $k \in \N$.
\end{enumerate}
In that case, the vectors of the unit basis are not absolutely mean irregular.
Also, we observe that the hypercyclicity condition is preserved.
\end{remark}


\section{Mean Li-Yorke chaotic semigroups}

\shs We recall that a one-parameter family $(T_t)_{t \geq 0}$ of operators
on $X$ is called a \emph{$C_0$-semigroup} if $T_0 = I$, $T_t T_s = T_{t+s}$
($t,s \geq 0$) and $\lim_{t \to s} T_tx = T_sx$ ($x \in X$ and $s \geq 0$).
It is well-known that such a semigroup is always \emph{locally equicontinuous},
in the sense that
$$
\sup_{t \in [0,b]} \|T_t\| < \infty \ \mbox{  for every } b > 0.
$$
We refer the reader to the book \cite{EN} for a detailed study of
$C_0$-semigroups. In the sequel, $\cT = (T_t)_{t \geq 0}$ will denote
an arbitrary $C_0$-semigroup, unless otherwise specified.

\begin{definition}
$\cT$ is said to be \emph{mean Li-Yorke chaotic} if there is an uncountable
subset $S$ of $X$ (a \emph{mean Li-Yorke set} for $\cT$) such that every pair
$(x,y)$ of distinct points in $S$ is a \emph{mean Li-Yorke pair} for $\cT$,
in the sense that
$$
\liminf_{b \to \infty} \frac{1}{b} \int_0^b \|T_tx - T_ty\|dt = 0
\ \ \ \mbox{  and } \ \ \
\limsup_{b \to \infty} \frac{1}{b} \int_0^b \|T_tx - T_ty\|dt > 0.
$$
If $S$ can be chosen to be dense (resp.\ residual) in $X$, then we say that
$\cT$ is \emph{densely} (resp.\ \emph{generically}) \emph{mean Li-Yorke chaotic}.
\end{definition}

Li-Yorke chaos and distributional chaos for $C_0$-semigroups were studied in 
\cite{ABMP,BC12,BP12,CLMP,CRT,W14}, for instance. 

\begin{definition}
$\cT$ is called \emph{absolutely Ces\`aro bounded} if there is $C > 0$
such that
$$
\sup_{b > 0} \frac{1}{b} \int_0^b \|T_tx\|dt \leq C \|x\|
\ \ \mbox{  for all } x \in X.
$$
\end{definition}

\begin{definition}
We say that $x \in X$ is an \emph{absolutely mean irregular}
(resp.\ \emph{absolutely mean semi-irregular}) \emph{vector} for $\cT$ if
$$
\liminf_{b \to \infty} \frac{1}{b} \int_0^b \|T_tx\|dt = 0
\ \ \ \mbox{  and } \ \ \
\limsup_{b \to \infty} \frac{1}{b} \int_0^b \|T_tx\|dt = \infty
\ (\mbox{ resp.} > 0).
$$
\end{definition}

\begin{definition}
An \emph{absolutely mean irregular manifold} for $\cT$ is a vector subspace
$Y$ of $X$ such that every nonzero vector in $Y$ is absolutely mean irregular
for $\cT$.
\end{definition}

As we did in \cite{CMP} for hypercyclicity, or in \cite{ABMP} for
distributional chaos, we will establish equivalences between the above
notions for $C_0$-semigroups and the corresponding ones for the operators
of the semigroup. The following simple lemma (whose proof we omit) will be
very useful for this purpose.

\begin{lemma}
For each $s > 0$, let $C_s:= \sup_{t \in [0,s]} \|T_t\| < \infty$. Then
$$
\frac{1}{C_s} \frac{1}{N+1} \sum_{j=1}^N \|(T_s)^j x\| \leq
\frac{1}{b} \int_0^b \|T_tx\|dt \leq
C_s \frac{1}{N} \sum_{j=0}^N \|(T_s)^jx\|,
$$
whenever $x \in X$ and $Ns \leq b < (N+1)s$ with $N \geq 1$.
\end{lemma}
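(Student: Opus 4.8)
The plan is to compare the continuous time-average with the two discrete Ces\`aro averages by splitting the interval of integration into the $N+1$ blocks $[js,(j+1)s)$, $0 \le j \le N$, and on each block using the semigroup law together with the uniform bound $\|T_r\| \le C_s$ for $r \in [0,s]$. Strong continuity of $\cT$ guarantees that $t \mapsto \|T_t x\|$ is continuous, so all the integrals make sense and the integrand is nonnegative. Throughout I would use freely the hypothesis $Ns \le b < (N+1)s$, which is exactly what controls the passage between the normalizing factors $1/b$, $1/(Ns)$ and $1/((N+1)s)$.

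For the upper bound, first I would write $\int_0^b \le \int_0^{(N+1)s} = \sum_{j=0}^N \int_{js}^{(j+1)s}$, using $b < (N+1)s$. On the $j$-th block I substitute $t = js + r$ with $r \in [0,s)$ and invoke the semigroup property $T_{js+r} = T_r (T_s)^j$, so that $\|T_t x\| \le \|T_r\|\,\|(T_s)^j x\| \le C_s \|(T_s)^j x\|$. Integrating in $r$ over $[0,s)$ gives
\[
\int_{js}^{(j+1)s} \|T_t x\|\, dt \le s\, C_s\, \|(T_s)^j x\|.
\]
Summing over $0 \le j \le N$ and then using $1/b \le 1/(Ns)$ (valid since $b \ge Ns$) yields precisely $\frac{1}{b}\int_0^b \|T_t x\|\, dt \le C_s\, \frac{1}{N}\sum_{j=0}^N \|(T_s)^j x\|$.

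For the lower bound I would run the same block decomposition in reverse. For $t \in [(j-1)s, js]$ the semigroup law gives $(T_s)^j x = T_{js} x = T_{js-t}\, T_t x$ with $js - t \in [0,s]$, whence $\|(T_s)^j x\| \le C_s \|T_t x\|$. Integrating this (the left side being constant in $t$) over the block of length $s$ gives $s\,\|(T_s)^j x\| \le C_s \int_{(j-1)s}^{js} \|T_t x\|\, dt$. Summing over $1 \le j \le N$ and using $\int_0^{Ns} \le \int_0^b$ (since $b \ge Ns$) produces
\[
\frac{1}{C_s}\sum_{j=1}^N \|(T_s)^j x\| \le \frac{1}{s}\int_0^b \|T_t x\|\, dt.
\]
Dividing by $N+1$ and using $s(N+1) > b$, so that $\tfrac{1}{s(N+1)} \le \tfrac{1}{b}$, gives the desired left-hand inequality $\frac{1}{C_s}\frac{1}{N+1}\sum_{j=1}^N \|(T_s)^j x\| \le \frac{1}{b}\int_0^b \|T_t x\|\, dt$.

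The individual estimates are elementary; I do not anticipate any genuine obstacle. The only point requiring care is the bookkeeping of the normalizing constants, namely pairing $1/b$ with $1/(Ns)$ in the upper bound and with $1/((N+1)s)$ in the lower bound. This is exactly where the two-sided constraint $Ns \le b < (N+1)s$ is used, and it is also the reason the sum runs from $j=0$ in the upper estimate but from $j=1$ in the lower one.
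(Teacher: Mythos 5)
Your proof is correct, and since the paper explicitly omits the proof of this lemma (calling it a ``simple lemma (whose proof we omit)''), your block decomposition of $[0,b]$ into the intervals $[js,(j+1)s)$, combined with $T_{js+r}=T_r(T_s)^j$ and $\|(T_s)^jx\|\le C_s\|T_tx\|$ for $t\in[(j-1)s,js]$, is precisely the standard argument the authors intended. The bookkeeping with $Ns\le b<(N+1)s$ — pairing $1/b$ with $1/(Ns)$ on the right and with $1/((N+1)s)$ on the left — is handled correctly, so there is nothing to add.
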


The next two propositions follow easily from this lemma.

\begin{proposition}
Given $x \in X$ and a $C_0$-semigroup $\cT$, the following are equivalent: 
\begin{enumerate} 
\item $x$ is an absolutely mean semi-irregular (resp.\ irregular)
      vector for $\cT$;
\item there exists $s>0$ such that $x$ is an absolutely mean semi-irregular (resp.\ irregular)
      vector for $T_s$; 
\item $x$ is an absolutely mean semi-irregular (resp.\ irregular)
      vector for $T_s$, for all $s > 0$. 
\end{enumerate} 
\end{proposition}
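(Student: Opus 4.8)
The plan is to reduce the continuous statement about $\cT$ to the corresponding discrete statement about a single operator $T_s$ via the preceding lemma, and then exploit the triviality of $(3)\Rightarrow(2)$. Fix $s>0$, let $C_s=\sup_{t\in[0,s]}\norm{T_t}<\infty$, and write $A(b):=\frac1b\int_0^b\norm{T_tx}\,dt$ for the continuous Ces\`aro average and $a_N:=\frac1N\sum_{j=1}^N\norm{(T_s)^jx}$ for the discrete one. For $b\geq s$ set $N=N(b):=\lfloor b/s\rfloor$, so that $Ns\leq b<(N+1)s$ and $N(b)\to\infty$ as $b\to\infty$. Rewriting the two sums in the lemma, I would record that for every such $b$,
\[
\frac{1}{C_s}\,\frac{N}{N+1}\,a_N\ \leq\ A(b)\ \leq\ C_s\Big(\frac{\norm{x}}{N}+a_N\Big).
\]

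First I would pass to the limit inferior and limit superior as $b\to\infty$. Since these bounds hold for \emph{every} $b$ in the block $[Ns,(N+1)s)$ (not merely at its endpoints) and $N(b)\to\infty$, and since $\frac{N}{N+1}\to1$ and $\norm{x}/N\to0$, I expect to obtain
\[
\frac{1}{C_s}\liminf_{N\to\infty}a_N\ \leq\ \liminf_{b\to\infty}A(b)\ \leq\ C_s\liminf_{N\to\infty}a_N,
\]
\[
\frac{1}{C_s}\limsup_{N\to\infty}a_N\ \leq\ \limsup_{b\to\infty}A(b)\ \leq\ C_s\limsup_{N\to\infty}a_N.
\]
Because $0<C_s<\infty$, multiplication by $C_s$ or by $1/C_s$ preserves each of the conditions ``$=0$'', ``$=\infty$'' and ``$>0$''. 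Hence $\liminf_{b}A(b)=0$ iff $\liminf_N a_N=0$, $\limsup_b A(b)=\infty$ iff $\limsup_N a_N=\infty$, and $\limsup_b A(b)>0$ iff $\limsup_N a_N>0$. Comparing with the definitions, this establishes, for the fixed $s$,
\[
x\ \text{is absolutely mean (semi-)irregular for}\ \cT\iff x\ \text{is absolutely mean (semi-)irregular for}\ T_s,
\]
and since $s>0$ was arbitrary, this equivalence holds for every $s$.

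With this per-$s$ equivalence established, the three assertions follow immediately. For $(1)\Rightarrow(3)$, if $x$ is (semi-)irregular for $\cT$, then applying the equivalence to each $s>0$ shows that $x$ is (semi-)irregular for every $T_s$. The implication $(3)\Rightarrow(2)$ is immediate. For $(2)\Rightarrow(1)$, if $x$ is (semi-)irregular for some $T_s$, the equivalence for that particular $s$ gives that $x$ is (semi-)irregular for $\cT$. I expect the only delicate point to be the continuous-to-discrete transfer of the limit inferior and limit superior: one must check that the pointwise sandwich bounds, valid for each $b$ in the block indexed by $N=N(b)$, indeed pass to $\liminf_{b\to\infty}$ and $\limsup_{b\to\infty}$, and that neither the fixed constant $C_s$ nor the vanishing correction factors $\frac{N}{N+1}$ and $\norm{x}/N$ can alter whether the limiting quantity is $0$, is $\infty$, or is strictly positive. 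Everything else is routine bookkeeping of the ``irregular'' and ``semi-irregular'' cases in parallel.
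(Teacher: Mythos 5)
Your proof is correct and follows exactly the route the paper intends: the paper states that this proposition ``follows easily'' from the preceding lemma and omits the details, and your argument supplies precisely those details, sandwiching the continuous Ces\`aro average between discrete ones via the constant $C_s$ and checking that the conditions ``$=0$'', ``$>0$'' and ``$=\infty$'' for the $\liminf$/$\limsup$ are preserved. The handling of the block indexing $Ns\leq b<(N+1)s$ and of the harmless correction terms $\frac{N}{N+1}$ and $\|x\|/N$ is exactly the bookkeeping the authors left to the reader.
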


\begin{proposition}
Given a $C_0$-semigroup $\cT$, the following are equivalent: 
\begin{enumerate} 
\item $\cT$ is absolutely Ces\`aro bounded (resp.\ (densely, generically)
      mean Li-Yorke chaotic, admits a dense absolutely mean irregular
      manifold);
\item there exists $s>0$ such that $T_s$ is absolutely Ces\`aro bounded
      (resp.\ (densely, generically) mean Li-Yorke chaotic,
      admits a dense absolutely mean irregular manifold);
\item $T_s$ is absolutely Ces\`aro bounded (resp.\ (densely, generically)
      mean Li-Yorke chaotic, admits a dense absolutely mean irregular
      manifold), for all $s>0$.
\end{enumerate} 
\end{proposition}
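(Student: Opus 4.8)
The plan is to treat the statement as two essentially independent reductions, both grounded in the comparison estimate of the Lemma above. The assertion splits into an absolutely Ces\`aro bounded part, which genuinely uses the two-sided inequality of that Lemma, and the three chaos-type assertions together with the manifold assertion, which reduce painlessly to the previous Proposition. The guiding observation is that, by the previous Proposition, for each fixed $s>0$ the set of absolutely mean semi-irregular (resp.\ irregular) vectors for $\cT$ coincides, as a subset of $X$, with the corresponding set for $T_s$: the equivalence of its items~1 and~3 shows that semi-irregularity for $\cT$ forces semi-irregularity for every $T_s$, while its items~2 and~1 supply the reverse implication from any single value of $s$.

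From this, the mean Li-Yorke assertions follow immediately. By linearity $T_t x - T_t y = T_t(x-y)$ and $(T_s)^j x - (T_s)^j y = (T_s)^j(x-y)$, so a pair $(x,y)$ is a mean Li-Yorke pair for $\cT$ exactly when $x-y$ is absolutely mean semi-irregular for $\cT$, and likewise for $T_s$. By the coincidence noted above, the mean Li-Yorke pairs for $\cT$ and for $T_s$ are therefore literally the same pairs, whence the collection of mean Li-Yorke sets for $\cT$ equals that for $T_s$. Since being uncountable, dense, or residual is a property of the set $S$ alone and is insensitive to the dynamics, $\cT$ is (densely, generically) mean Li-Yorke chaotic iff $T_s$ is, for every $s>0$, and the chain $(1)\Rightarrow(3)\Rightarrow(2)\Rightarrow(1)$ is then formal. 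The manifold assertion is handled identically: a subspace $Y$ is an absolutely mean irregular manifold for $\cT$ iff every nonzero vector of $Y$ is absolutely mean irregular for $\cT$ iff the same holds for $T_s$, so the absolutely mean irregular manifolds of $\cT$ and $T_s$ coincide and a dense one exists for $\cT$ iff it exists for $T_s$.

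The only part requiring computation is the absolutely Ces\`aro bounded equivalence, and here I would invoke the Lemma directly. Fix $s>0$ and write $C_s=\sup_{t\in[0,s]}\norm{T_t}$. If $\cT$ is absolutely Ces\`aro bounded with constant $C$, then for $N\ge 1$ I take $b=Ns$ and apply the left-hand inequality of the Lemma to obtain $\frac{1}{C_s}\frac{1}{N+1}\sum_{j=1}^N\norm{(T_s)^j x}\le \frac{1}{b}\int_0^b\norm{T_t x}\,dt\le C\norm{x}$; multiplying through by $C_s\frac{N+1}{N}\le 2C_s$ shows that $T_s$ is absolutely Ces\`aro bounded with constant $2C_sC$, for every $s$. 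Conversely, if $T_s$ is absolutely Ces\`aro bounded with constant $C$, then for $b\ge s$ I pick $N\ge 1$ with $Ns\le b<(N+1)s$ and combine the right-hand inequality with $\frac{1}{N}\sum_{j=0}^N\norm{(T_s)^j x}\le \frac{1}{N}\norm{x}+C\norm{x}\le (1+C)\norm{x}$ to bound $\frac{1}{b}\int_0^b\norm{T_t x}\,dt\le C_s(1+C)\norm{x}$, while for $0<b<s$ local equicontinuity gives $\frac{1}{b}\int_0^b\norm{T_t x}\,dt\le C_s\norm{x}$ outright; hence $\cT$ is absolutely Ces\`aro bounded. The main obstacle, such as it is, is purely bookkeeping: absorbing the off-by-one discrepancy between the $\frac{1}{N+1}\sum_{j=1}^N$ and $\frac{1}{N}\sum_{j=0}^N$ normalizations in the Lemma, and separately treating the short-time range $0<b<s$ in which no full block $[0,s]$ has yet been traversed.
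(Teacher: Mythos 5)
Your proposal is correct and follows exactly the route the paper intends: the paper omits the details, stating only that this proposition (like the preceding one) ``follows easily'' from the comparison lemma, and your argument fills those details in faithfully --- the chaos and manifold assertions via the pointwise coincidence of (semi-)irregular vectors from the preceding proposition, and the absolutely Ces\`aro bounded assertion via the two-sided inequality of the lemma, with the short-time range $0<b<s$ handled by local equicontinuity. Nothing is missing; the bookkeeping with the $\frac{1}{N+1}$ versus $\frac{1}{N}$ normalizations and the choice $b=Ns$ is handled correctly.
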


\begin{remark}
With these propositions at hand, it is easy to transport many of our
previous theorems on operators to the semigroup setting. For instance,
Theorems~\ref{MLY} (without~(v)), \ref{DMLY} and \ref{GMLY} remain valid
if we replace the operator $T$ by the semigroup $\cT$. In particular,
absolutely Ces\`aro bounded $C_0$-semigroups are never mean Li-Yorke
chaotic.
\end{remark}

\begin{theorem}\label{MLYCS}
Suppose that the set
$\Big\{x \in X : \displaystyle \lim_{b \to \infty} \frac{1}{b}
\int_0^b \|T_t x\|dt = 0\Big\}$ is dense in $X$. If any of the following
conditions is true:
\begin{itemize}
\item [(a)] $\displaystyle \limsup_{b \to \infty} \frac{1}{b}
            \int_0^b \|T_t y_0\|dt > 0$ for some $y_0 \in X$,
\item [(b)] $X$ is a Banach space and $\displaystyle \limsup_{t \to \infty}
            \frac{\|T_t\|}{t} > 0$,
\item [(c)] $X$ is a Hilbert space and $\displaystyle \limsup_{t \to \infty}
            \frac{\|T_t\|}{t^{\frac{1}{2}}} > 0$,
\item [(d)] there is some $\lambda \in \sigma_p(A)$ with $\re \lambda \geq 0$,
            where $A$ is the infinitesimal generator of $\cT$,
\end{itemize}
then $\cT$ has a residual set of absolutely mean irregular vectors.
If, in addition, $X$ is separable, then $\cT$ admits a dense absolutely
mean irregular manifold.
\end{theorem}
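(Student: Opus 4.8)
The plan is to reduce the entire statement to the single operator $T_1$ and then invoke the operator-level results already proved, namely Theorem~\ref{AdditionalCondition}, Corollary~\ref{DC1} and Theorem~\ref{DAMIM}, with the Lemma and the two propositions immediately preceding this theorem serving as the dictionary between $\cT$ and $T_s$. Throughout I fix $s=1$ and write $C_1:=\sup_{t\in[0,1]}\|T_t\|<\infty$, which is finite by local equicontinuity.

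First I would transfer the standing density hypothesis. The left-hand estimate of the Lemma shows that $\lim_{b\to\infty}\frac1b\int_0^b\|T_tx\|\,dt=0$ forces $\frac1N\sum_{j=1}^N\|(T_1)^jx\|\to0$; hence the set $\{x:\liminf_N\frac1N\sum_{j=1}^N\|(T_1)^jx\|=0\}$ contains the dense set of the hypothesis and is therefore dense in $X$. This is exactly the hypothesis required to apply Theorem~\ref{AdditionalCondition} and Corollary~\ref{DC1} to $T_1$. It then remains to check that each of conditions (a)--(d) becomes one of the hypotheses of those results for $T_1$. For (a), the right-hand estimate of the Lemma gives $\frac1b\int_0^b\|T_ty_0\|\,dt\le C_1\frac1N\sum_{j=0}^N\|(T_1)^jy_0\|$ when $N\le b<N+1$; since the $j=0$ term is negligible in the average, $\limsup_b\frac1b\int_0^b\|T_ty_0\|\,dt>0$ yields $\limsup_N\frac1N\sum_{j=1}^N\|(T_1)^jy_0\|>0$, which is condition~(iv) of Theorem~\ref{AdditionalCondition} for $T_1$. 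For (b) and (c), writing $t=n+r$ with $0\le r<1$ and using $T_t=T_r(T_1)^n$ gives $\|T_t\|\le C_1\|(T_1)^n\|$ together with $t\ge n$, so that $\|T_t\|/t\le C_1\|(T_1)^n\|/n$ and $\|T_t\|/t^{1/2}\le C_1\|(T_1)^n\|/n^{1/2}$; passing to the limit superior turns (b) into Corollary~\ref{DC1}(c) and (c) into Corollary~\ref{DC1}(d) for $T_1$. For (d), an eigenvector $x$ of the generator $A$ with $Ax=\lambda x$ satisfies $T_tx=e^{\lambda t}x$, so $e^{\lambda}$ is an eigenvalue of $T_1$ with $|e^{\lambda}|=e^{\re\lambda}\ge1$, which is Corollary~\ref{DC1}(e) for $T_1$ (equivalently $\|T_tx\|\ge\|x\|$ gives condition (a) directly).

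In each case Theorem~\ref{AdditionalCondition} or Corollary~\ref{DC1} produces a residual set of absolutely mean irregular vectors for $T_1$. By the first of the two preceding propositions, a vector is absolutely mean irregular for $\cT$ if and only if it is absolutely mean irregular for $T_1$, so this residual set consists of absolutely mean irregular vectors for $\cT$; this proves the first assertion. When $X$ is separable I would then apply the dichotomy of Theorem~\ref{DAMIM} to $T_1$, whose density hypothesis has already been verified: since $T_1$ now admits an absolutely mean irregular vector, alternative (a) of that theorem fails, so $T_1$ admits a dense absolutely mean irregular manifold, and the second preceding proposition transports this manifold back to $\cT$. The only place requiring genuine computation is the pair of norm comparisons used for (b) and (c), which rest on local equicontinuity and the semigroup law; I expect no real obstacle there, since these are the continuous-time analogues of estimates already used in the discrete setting, and the identity $T_tx=e^{\lambda t}x$ needed for (d) is standard $C_0$-semigroup theory.
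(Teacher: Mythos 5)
Your proposal is correct and follows essentially the same route as the paper's own (very terse) proof: reduce everything to the time-one operator $T_1$ via the comparison Lemma and the two transfer propositions, apply Theorem~\ref{AdditionalCondition} and Corollary~\ref{DC1} to $T_1$ (with exactly the same translations of (b), (c), (d)), and obtain the dense manifold in the separable case from Theorem~\ref{DAMIM}, which is precisely the content of Remark~\ref{R} that the paper cites. Your write-up merely supplies the details the paper leaves implicit.
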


\begin{proof}
It is a consequence of Theorem~\ref{AdditionalCondition} and
Corollary~\ref{DC1} (note that (b) and (c) imply that
$\displaystyle \limsup_{n \to \infty} \frac{\|T_1^n\|}{n} > 0$ and
$\displaystyle \limsup_{n \to \infty} \frac{\|T_1^n\|}{n^{\frac{1}{2}}} > 0$,
respectively). For the last assertion, see Remark~\ref{R}.
\end{proof}

As an immediate consequence of the previous theorem, we have the following
dichotomy for translation semigroups on weighted $L^p$ spaces.

\begin{corollary}
Let $v : \R_+ \to \R$ be an admissible weight function and consider
the translation semigroup $\cT$, given by
$$
T_t(f)(x) = f(x+t), \ \ t,x \geq 0,
$$
on the space $L^p_v(\R_+)$ \cite[Example~7.4]{GEP11}. Then either
\begin{itemize}
\item [\rm (a)] $\displaystyle \lim_{b \to \infty} \frac{1}{b}
                \int_0^b \|T_tf\|dt = 0$ for every $f \in L^p_v(\R_+)$, or
\item [\rm (b)] $\cT$ admits a dense absolutely mean irregular manifold.
\end{itemize}
\end{corollary}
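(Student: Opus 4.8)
The plan is to deduce this dichotomy directly from Theorem~\ref{MLYCS}, so the whole task reduces to checking that the standing hypothesis of that theorem is met and then converting the failure of alternative~(a) into alternative~(b). Accordingly, this should be a short argument rather than a fresh construction.

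First I would verify that the set
$$
X_0 := \Big\{ f \in L^p_v(\R_+) : \lim_{b \to \infty} \frac{1}{b} \int_0^b \|T_t f\|\,dt = 0 \Big\}
$$
is dense in $L^p_v(\R_+)$. The key observation is that the compactly supported functions form a dense subspace (here the admissibility of $v$ is what guarantees that $\cT$ is a genuine $C_0$-semigroup on $L^p_v(\R_+)$ and that such functions belong to the space). If $f$ is supported in $[0,R]$, then $T_t f(x) = f(x+t) = 0$ for every $x \geq 0$ as soon as $t > R$; hence $\|T_t f\| = 0$ for all $t > R$, and therefore $\frac{1}{b}\int_0^b \|T_t f\|\,dt \to 0$ as $b \to \infty$. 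Thus every compactly supported function lies in $X_0$, and $X_0$ is dense.

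Next I would handle the dichotomy itself. Suppose alternative~(a) fails, that is, there is some $y_0 \in L^p_v(\R_+)$ for which $\frac{1}{b}\int_0^b \|T_t y_0\|\,dt$ does \emph{not} tend to $0$. Since the integrand is nonnegative, the corresponding $\liminf$ is automatically $\geq 0$, so the failure of convergence to $0$ is equivalent to
$$
\limsup_{b \to \infty} \frac{1}{b} \int_0^b \|T_t y_0\|\,dt > 0,
$$
which is precisely condition~(a) in Theorem~\ref{MLYCS}. Having already checked that $X_0$ is dense, I may invoke that theorem to obtain a residual set of absolutely mean irregular vectors for $\cT$. Finally, since $L^p_v(\R_+)$ (for $1 \le p < \infty$) is an $L^p$ space for the $\sigma$-finite measure $v(x)\,dx$, it is separable, so the last assertion of Theorem~\ref{MLYCS} delivers a dense absolutely mean irregular manifold for $\cT$, which is exactly alternative~(b).

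The only step demanding real care is the density of $X_0$: one must use the precise hypotheses on the admissible weight $v$ to know both that $\cT$ is a $C_0$-semigroup on $L^p_v(\R_+)$ and that the compactly supported functions are contained in and dense in the space. Once that is in place, the remainder is simply the bookkeeping of the dichotomy together with a direct appeal to Theorem~\ref{MLYCS}.
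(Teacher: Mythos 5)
Your proposal is correct and follows exactly the route the paper intends: the corollary is stated there as an immediate consequence of Theorem~\ref{MLYCS}, with the density of the set $X_0$ supplied by the compactly supported functions (which translation annihilates after finite time, so they even satisfy the stronger ``$\lim = 0$'' condition needed for the dense-manifold assertion via Remark~\ref{R}), and the failure of alternative~(a) yielding hypothesis~(a) of that theorem. Your write-up simply makes explicit the details the paper leaves to the reader, including the separability of $L^p_v(\R_+)$.
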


As a consequence of Theorem~\ref{MLYCS}(b) and \cite[page 224]{MV}, we obtain

\begin{corollary}\label{L1}
The $C_0$-semigroup ${\cal T}$ defined on $L^1(1,\infty)$ by
$$
T_tf(x):= \Big(\frac{x+t}{x}\Big )f(x+t)
$$
is mean Li-Yorke chaotic.
\end{corollary}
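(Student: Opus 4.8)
The plan is to apply Theorem~\ref{MLYCS}(b) to the separable Banach space $X = L^1(1,\infty)$, so the two things to establish are its standing density hypothesis and the growth condition~(b); the role of \cite[page~224]{MV} is to supply that $\cT$ is a $C_0$-semigroup with $\|T_t\| = 1+t$.

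First I would record the semigroup structure and the norm. The semigroup law is the direct computation
$$
T_tT_sf(x) = \frac{x+t}{x}\,\frac{(x+t)+s}{x+t}\,f(x+t+s) = T_{t+s}f(x),
$$
and strong continuity at $0$ follows from $\|T_tf - f\| \le t\|f\| + \int_1^\infty |f(x+t)-f(x)|\,dx$ together with the continuity of translation in $L^1$. Substituting $y = x+t$ gives $\|T_tf\| = \int_{1+t}^\infty \frac{y}{y-t}\,|f(y)|\,dy$, and since $y\mapsto \frac{y}{y-t}$ decreases on $(1+t,\infty)$ from its supremal value $1+t$, concentrating the mass of $f$ just above the point $1+t$ yields $\|T_t\| = 1+t$. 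Hence $\limsup_{t\to\infty}\frac{\|T_t\|}{t} = 1 > 0$, which is exactly hypothesis~(b) of Theorem~\ref{MLYCS}.

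Second I would verify the density hypothesis, namely that the set of $f$ with $\lim_{b\to\infty}\frac{1}{b}\int_0^b\|T_tf\|\,dt = 0$ is dense. The key point is that a function $f$ supported in $[1,R]$ satisfies $T_tf \equiv 0$ whenever $t > R-1$, because $x \ge 1$ then forces $x+t > R$ and $f(x+t) = 0$. Consequently $\int_0^b\|T_tf\|\,dt$ is constant (and finite) for $b > R-1$, so its Ces\`aro average tends to $0$. As compactly supported functions are dense in $L^1(1,\infty)$, the hypothesis holds.

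With both ingredients in place and $L^1(1,\infty)$ separable, Theorem~\ref{MLYCS}(b) produces a dense absolutely mean irregular manifold for $\cT$; in particular $\cT$ is mean Li-Yorke chaotic. I expect the only subtle step to be pinning down the exact linear growth of $\|T_t\|$ --- specifically the lower bound $\|T_t\| \ge 1+t$, obtained by exploiting the extremal behaviour of the weight $\frac{x+t}{x}$ near $x = 1$ --- which is precisely what the cited reference provides; the remaining verifications are routine.
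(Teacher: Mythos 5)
Your proof is correct and takes essentially the same approach as the paper: the paper likewise deduces the corollary by applying Theorem~\ref{MLYCS}(b), citing \cite[page 224]{MV} for the relevant facts about this semigroup (in particular the linear growth $\|T_t\| = 1+t$). Your only difference is that you verify directly what the paper delegates to that reference --- the norm computation, strong continuity, and the density of the set of functions with vanishing Ces\`aro averages (via compactly supported functions) --- and these verifications are all sound.
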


\begin{question}
Is there a $C_0$-semigroup which is mean Li-Yorke chaotic but is not
distributionally chaotic?
Is the semigroup defined in the above corollary distributionally chaotic?
\end{question}

\begin{theorem}
There exists a mixing absolutely Ces\`aro bounded $C_0$-semigroup
$\cT$ on $L^p(1,\infty)$ for $1 \leq p < \infty$.
\end{theorem}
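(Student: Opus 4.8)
The plan is to realize $\cT$ as a weighted translation semigroup. Fix a parameter $\alpha\in(0,1)$ and the weight $v(x):=x^{-\alpha}$ on $(1,\infty)$; this is an admissible weight function, since $v(x)/v(x+t)=\big((x+t)/x\big)^{\alpha}\le(1+t)^{\alpha}\le e^{\alpha t}$ for $x\ge1$. I would then define $\cT=(T_t)_{t\ge0}$ on $L^p(1,\infty)$, for every $p\in[1,\infty)$, by
$$
T_t g(x):=\Big(\frac{v(x)}{v(x+t)}\Big)^{1/p} g(x+t)=\Big(\frac{x+t}{x}\Big)^{\alpha/p} g(x+t).
$$
A one-line check gives $T_tT_s=T_{t+s}$ and $T_0=I$, and the map $J\colon L^p_v(1,\infty)\to L^p(1,\infty)$, $Jf=v^{1/p}f$, is an isometric isomorphism conjugating $\cT$ to the plain translation semigroup $R_tf=f(\cdot+t)$ on the weighted space $L^p_v(1,\infty)$. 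In particular $\cT$ is a $C_0$-semigroup. Note that the case $\alpha=1$, $p=1$ recovers the mean Li-Yorke chaotic semigroup of Corollary~\ref{L1}; the restriction $\alpha<1$ is exactly what will buy absolute Ces\`aro boundedness.

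For mixing, since $\cT$ is isometrically conjugate to $(R_t)$ on $L^p_v(1,\infty)$, it suffices to prove that $(R_t)$ is mixing, for which I would invoke the standard Mixing Criterion for translation semigroups \cite{GEP11}: the compactly supported functions are dense, each is annihilated by $R_t$ for $t$ large, and the right inverse $S_tf(x):=f(x-t)\mathbf 1_{(1+t,\infty)}(x)$ satisfies $\|S_tf\|_v^p=\int|f(y)|^p v(y+t)\,dy\to0$ because $v(y+t)=(y+t)^{-\alpha}\to0$ uniformly on the (compact) support of $f$. Hence $(R_t)$, and therefore $\cT$, is mixing.

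Absolute Ces\`aro boundedness is the crux. Computing in $L^p(1,\infty)$ gives $\|T_tg\|^p=\int_{1+t}^\infty \frac{v(y-t)}{v(y)}|g(y)|^p\,dy$, so by Fubini
$$
\frac1b\int_0^b\|T_tg\|^p\,dt=\int_1^\infty |g(y)|^p\,\frac{1}{v(y)}\cdot\frac1b\int_{\max(y-b,1)}^{y} v(s)\,ds\;dy.
$$
By Jensen's inequality $\frac1b\int_0^b\|T_tg\|\,dt\le\big(\frac1b\int_0^b\|T_tg\|^p\,dt\big)^{1/p}$, so it is enough to bound the right-hand side by $C^p\|g\|^p$, which in turn reduces to the pointwise ($A_1$-type) estimate
$$
\sup_{y\ge1,\;b>0}\frac{1}{v(y)}\cdot\frac1b\int_{\max(y-b,1)}^{y}v(s)\,ds\le C^p.
$$
For $v(s)=s^{-\alpha}$ I would check that, for fixed $y$, the quantity $\frac{1}{v(y)b}\int_{\max(y-b,1)}^{y}v$ is increasing in $b$ up to $b=y-1$ (a backward mean of a decreasing function) and decreasing afterwards, so the supremum is attained at $b=y-1$ and equals $\frac{y-y^{\alpha}}{(1-\alpha)(y-1)}\le\frac{1}{1-\alpha}$ for all $y\ge1$ (using $y^\alpha\ge1$). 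Thus $C=(1-\alpha)^{-1/p}$ works, establishing absolute Ces\`aro boundedness.

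The main obstacle is precisely the tension in this last step: mixing forces $v(t)\to0$, so the individual operators $T_t$ have operator norm $\big(\sup_y v(y-t)/v(y)\big)^{1/p}\to\infty$, and a naive operator-norm bound is useless. The point is that absolute Ces\`aro boundedness only controls the \emph{time-average} of $\|T_tg\|$ for each fixed $g$, which the Fubini rearrangement converts into the backward-averaging inequality above; this inequality holds exactly because $\alpha<1$ (for $\alpha\ge1$, e.g.\ the $v=1/x$ weight of Corollary~\ref{L1}, the supremum diverges like $\frac{y}{y-1}\ln y$, and the semigroup is mean Li-Yorke chaotic rather than absolutely Ces\`aro bounded). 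Verifying the averaging inequality uniformly in both $y$ and $b$ is the only genuinely technical part of the argument.
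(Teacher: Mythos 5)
Your proposal is correct and follows essentially the same route as the paper: the same weighted translation semigroup (your $\alpha$ is the paper's $1-\eps$), the same conjugacy to the unweighted translation semigroup on $L^p_v(1,\infty)$ for mixing, and the same Fubini-plus-Jensen argument for absolute Ces\`aro boundedness. The only difference is cosmetic: your pointwise backward-averaging estimate gives the sharper constant $(1-\alpha)^{-1/p}$, whereas the paper splits the double integral into two ranges and settles for $\big(2+\tfrac{2}{\eps}\big)^{1/p}$.
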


\begin{proof}
Let $0 < \eps < 1$ and consider the weighted translation semigroup $(T_t)$
on $L^p(1,\infty)$ defined by
$$
T_tf(x):= \Big(\frac{x+t}{x}\Big)^{\frac{1-\varepsilon}{p}}f(x+t).
$$
Given $f \in L^p(1,\infty)$ with $\|f\|_p = 1$, and $b > 0$, we have that
\begin{align*}
\int_0^b \|T_tf\|^p dt
&= \int_0^b \Big(\int_1^\infty \Big(\frac{x+t}{x}\Big)^{1-\eps}|f(x+t)|^p dx\Big)dt\\
&= \int_0^b \Big(\int_{1+t}^\infty \Big(\frac{x}{x-t}\Big)^{1-\eps}|f(x)|^p dx\Big)dt\\
&= \int_1^\infty x^{1-\eps}|f(x)|^p \int_0^{\min\{x-1,b\}}\Big(\frac{1}{x-t}\Big)^{1-\eps}dt\,dx\\
&\leq \int_1^{1+2b} x^{1-\eps}|f(x)|^p \int_0^{x-1} \Big(\frac{1}{x-t}\Big)^{1-\eps}dt\,dx\\
&\ \ \ \ \ \
 + \int_{1+2b}^\infty |f(x)|^p \int_0^b \Big(\frac{x}{x-t}\Big)^{1-\eps}dt\,dx\\
&\leq \int_{1}^{1+2b} \frac{x-1}{\eps}|f(x)|^p dx + 2b
 \leq \Big(2 + \frac{2}{\eps}\Big) b.
\end{align*}
So,
$$
\Big(\frac{1}{b} \int_0^b \|T_tf\|dt\Big)^p
\leq \frac{1}{b} \int_0^b \|T_tf\|^p dt
\leq 2 + \frac{2}{\eps}\cdot
$$
Thus, $(T_t)$ is an absolutely Ces\`aro bounded $C_0$-semigroup.
Let us now see that $(T_t)$ is mixing. If $v(x):= (\frac{1}{x})^{1-\eps}$,
then $T_t$ can be rewritten as
$$
T_tf(x) = \Big(\frac{v(x)}{v(x+t)}\Big)^{\frac{1}{p}} f(x+t).
$$
Since $v$ is an admissible weight function, the translation semigroup
defined as
$$
\tau_tf(x):= f(x+t)
$$
is a $C_0$-semigroup on $L_v^p(1,\infty)$. Moreover, $T_t$ on $L^p(1,\infty)$
and $\tau_t$ on $L_v^p(1,\infty)$ are topologically conjugate. Since
$(\tau_t)$ is mixing because $\lim_{x \to \infty} v(x) = 0$
\cite[Example~7.10(b)]{GEP11}, we conclude that $(T_t)$ is also mixing.
\end{proof}

\begin{corollary}
There exists a mixing not absolutely mean irregular $C_0$-semigroup $\cT$
on $L^p(1,\infty)$ for $1 \leq p < \infty$.
\end{corollary}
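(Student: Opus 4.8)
The plan is to let $\cT = (T_t)_{t \geq 0}$ be precisely the mixing absolutely Ces\`aro bounded $C_0$-semigroup on $L^p(1,\infty)$ produced in the preceding theorem, and then to check only that such a semigroup cannot possess any absolutely mean irregular vector. The mixing property is already supplied by that theorem, so the entire task reduces to establishing the second, purely formal, assertion.

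For that assertion I would argue directly from the defining inequality of absolute Ces\`aro boundedness. The semigroup from the previous theorem satisfies
$$
\sup_{b > 0} \frac{1}{b} \int_0^b \|T_t f\|\,dt \leq C \|f\| \ \ \mbox{for all } f \in L^p(1,\infty),
$$
with $C = 2 + 2/\eps$. Consequently $\limsup_{b \to \infty} \frac{1}{b}\int_0^b \|T_t f\|\,dt \leq C\|f\| < \infty$ for every $f$, so no vector can meet the requirement that this $\limsup$ equal $\infty$, which is part of the definition of an absolutely mean irregular vector. Hence $\cT$ admits no absolutely mean irregular vector, which is exactly the asserted property.

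More conceptually, and in keeping with the structure of the paper, one may instead invoke the transfer principle recorded in the Remark following the two Propositions of this section: absolutely Ces\`aro bounded $C_0$-semigroups are never mean Li-Yorke chaotic, while the semigroup form of Theorem~\ref{MLY} equates mean Li-Yorke chaos with the existence of an absolutely mean (semi-)irregular vector. Combining these two facts with the mixing semigroup of the previous theorem yields the corollary at once. There is essentially no obstacle here: all of the real work lies in the construction of the mixing absolutely Ces\`aro bounded semigroup, and the corollary is a one-line consequence of that construction together with the operator-level result (Corollary~\ref{NotACB}) transported to the semigroup setting. The only point needing slight care is the reading of ``not absolutely mean irregular'' as ``admitting no absolutely mean irregular vector,'' under which the uniform Ces\`aro bound settles the matter immediately.
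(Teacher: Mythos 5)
Your proposal is correct and matches the paper's intent exactly: the corollary is stated without proof as an immediate consequence of the preceding theorem, the point being that the absolute Ces\`aro bound $\sup_{b>0}\frac{1}{b}\int_0^b \|T_t f\|\,dt \leq C\|f\|$ makes the $\limsup$ condition in the definition of an absolutely mean irregular vector impossible, so the mixing semigroup just constructed admits no such vector. Both your direct argument and your alternative via the semigroup transfer of Corollary~\ref{NotACB} are faithful to the paper's reasoning.
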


\begin{theorem}\label{DCNot1}
There exists a forward translation $C_0$-semigroup $\cT$ on $L_v^p(\R_+)$
which is distributionally chaotic and not mean Li-Yorke chaotic.
\end{theorem}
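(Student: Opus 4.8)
The plan is to build an admissible weight $v$ on $\R_+$ for which the forward translation semigroup $\cT=(T_t)_{t\ge0}$, given by $T_tf(x)=f(x-t)$ for $x\ge t$ and $T_tf(x)=0$ for $0\le x<t$, on $X=L^p_v(\R_+)$ is distributionally chaotic while admitting no absolutely mean semi-irregular vector. By the semigroup analogue of Theorem~\ref{MLY} (the Remark transporting it to $C_0$-semigroups), mean Li-Yorke chaos of $\cT$ is equivalent to the existence of an absolutely mean semi-irregular vector, so the second property yields at once that $\cT$ is not mean Li-Yorke chaotic. This is the continuous counterpart of Example~\ref{Ex4}: the obstruction to mean Li-Yorke chaos will be that $\frac1b\int_0^b\|T_tf\|\,dt\to\infty$ for \emph{every} $f\ne0$, so the lower average can never be $0$. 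Note that the forward (rather than backward) direction is essential, since for $f(x+t)$ the compactly supported functions die out, their averages tend to $0$ on a dense set, and distributional chaos would then force mean Li-Yorke chaos via the semigroup form of Corollary~\ref{DC1}.

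First I would fix the shape of $v$. Writing $\|T_tf\|_v^p=\int_0^\infty v(y+t)\,|f(y)|^p\,dy$, the size of an orbit at time $t$ is governed by the values of $v$ sampled around the argument $t$. I would therefore prescribe $v$ on a sequence of alternating blocks: long \emph{valleys} $I_k$ on which $v$ is extremely small and tall \emph{hills} $J_k$ on which $v$ is very large, with the block lengths growing so fast that both $\bigcup_k I_k$ and $\bigcup_k J_k$ have upper Lebesgue density $1$, and with the hill heights $h_k\to\infty$ growing fast enough that $\frac1b\int_0^b v(s)^{1/p}\,ds\to\infty$. I would then check that $v$ is admissible and that $\cT$ is a well-defined $C_0$-semigroup on $L^p_v(\R_+)$.

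The distributional chaos would follow by exhibiting a single distributionally irregular vector, e.g.\ $f_0=\mathbf{1}_{[0,1]}$: the valley times force $\|T_tf_0\|\to0$ along a set of upper density $1$, so the orbit is distributionally near $0$, while the hill times force $\|T_tf_0\|\to\infty$ along a set of upper density $1$, so the orbit is distributionally unbounded; by the characterization of distributional chaos through distributionally irregular vectors, $\cT$ is distributionally chaotic. For the failure of mean Li-Yorke chaos I would prove the uniform divergence $\lim_{b\to\infty}\frac1b\int_0^b\|T_tf\|\,dt=\infty$ for all $f\ne0$, bounding the average below by the contribution of the hills to $\int_0^\infty v(y+t)|f(y)|^p\,dy$: for any fixed $f$ the translated weights $v(\cdot+t)$ sweep across the hills as $t$ grows, so the average inherits the divergence of $\frac1b\int_0^b v(s)^{1/p}\,ds$. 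Consequently no vector has lower average $0$, there is no absolutely mean semi-irregular vector, and $\cT$ is not mean Li-Yorke chaotic.

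I expect the main obstacle to be this last step: obtaining the divergence of the average \emph{uniformly over all nonzero $f$}, not merely for vectors concentrated near the origin, since different $f$ place their mass in different locations and the valleys (needed to keep the orbit distributionally near $0$) pull the average down. The resolution is a careful balancing of the two growth rates — block lengths large enough that the valley and hill time-sets each have upper density $1$ (for distributional chaos), and hill heights large enough that, even after discounting the long valleys, the time-average of $v^{1/p}$, and hence of $\|T_tf\|$, still tends to infinity for every $f\ne0$.
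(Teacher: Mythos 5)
Your proposal is correct in outline, but it follows a genuinely different route from the paper's. The paper's proof is a short transfer argument: it quotes \cite[Theorem~25]{BBPW}, which provides a weighted forward shift $F_w$ on $\ell^p(\N)$ that is distributionally chaotic and satisfies $\lim_{N\to\infty}\frac{1}{N}\sum_{j=1}^N\|(F_w)^jx\|=\infty$ for every $x\neq 0$; by conjugacy this becomes an unweighted forward shift on a weighted space $\ell^p(v',\N)$, and the admissible weight $v$ is then taken to be the polygonal interpolation of $v'$, so that the forward translation semigroup on $L^p_v(\R_+)$ inherits distributional chaos and the absence of mean Li-Yorke chaos from the discrete model via the discretization results of Section~7. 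You instead build the weight directly on $\R_+$: valleys of upper density one (giving distributional proximality to $0$, hence, together with the hills, a distributionally irregular vector such as $\mathbf{1}_{[0,1]}$ and the scrambled set $\{\lambda\,\mathbf{1}_{[0,1]}:\lambda\in\K\}$), and hills of upper density one whose heights force $\frac1b\int_0^b\|T_tf\|\,dt\to\infty$ for all $f\neq 0$, so that no absolutely mean semi-irregular vector exists and the semigroup analogue of Theorem~\ref{MLY} applies; in effect you re-prove in continuous time exactly the dichotomy the paper imports from \cite{BBPW}. Your two structural observations are correct and are the same ones underlying the paper's choice of model: it suffices to kill all absolutely mean semi-irregular vectors, and the forward direction is essential, since for the backward translation the compactly supported functions form a dense set with averages tending to $0$ and Corollary~\ref{DC1}(a) would then force mean Li-Yorke chaos. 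What the paper's route buys is that the delicate quantitative step---choosing hill heights so that the Ces\`aro averages diverge for \emph{every} nonzero $f$ despite the dilution caused by the ever longer valleys---comes for free from the cited discrete theorem; in your route this balancing, which you correctly single out as the main obstacle, still has to be executed (it is feasible: fix the length of the valley following the $k$-th hill first, then choose the height $h_k$ so large that the hill's contribution alone exceeds $k$ times the total elapsed length up to the end of that valley, and insert transitions of length proportional to $\log h_k$ so that the weight stays admissible for the forward translation). So both arguments are sound: yours is self-contained but must redo the hard estimate, while the paper's is a brief reduction to a known discrete example.
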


\begin{proof}
It was proved in \cite[Theorem 25]{BBPW} that there is a sequence
$w = (w_n)_{n \in \N}$ of positive weights such that the unilateral
weighted forward shift
$$
F_w : (x_1,x_2,\ldots) \in \ell^p(\N) \mapsto
      (0,w_1x_1,w_2x_2,\ldots) \in \ell^p(\N)
$$
is distributionally chaotic and satisfies
$$
\lim_{N \to \infty} \frac{1}{N} \sum_{j=1}^N \|(F_w)^jx\| = \infty
  \ \ \mbox{  for all } x \in \ell^p(\N) \backslash \{0\}.
$$
By using conjugacy, we see that there is a sequence $v' = (v_n)_{n \in \N}$
of positive weights such that the unweighted forward shift on
$\ell^p(v',\N)$ is distributionally chaotic and not mean Li-Yorke chaotic.
Now, if we consider as admissible weight function $v$ the polygonal formed
by the sequence $v'$, then the forward translation $C_0$-semigroup
on $L_v^p(\R_+)$ is distributionally chaotic and not mean Li-Yorke chaotic.
\end{proof}

\begin{remark}
The example in Theorem~\ref{t_bil_cami} can be easily adapted to the
semigroup setting in order to construct a completely absolutely mean
irregular $C_0$-semigroup $\cT$. Indeed, the translation semigroup on
$L^p_v(\R)$ does the job if we fix $v(k) = v_k$, $k \in \Z$, where
$(v_k)_{k \in \Z}$ is the sequence of weights of the example in
Theorem~\ref{t_bil_cami}, and we set $v(x)=v(k)$, $x\in\; ]k-1,k]$, $k\in\Z$.
\end{remark}


\section*{Acknowledgements}

This work was partially done on a visit of the first author to the
\emph{Institut Universitari de Matemàtica Pura i Aplicada} at
\emph{Universitat Polit\`ecnica de Val\`encia}, and he is very grateful
for the hospitality and support.
The first author was partially supported by CNPq (Brazil). 
The second and third authors were supported by MINECO and FEDER,
Project MTM2016-75963-P.  
The third author was also supported by Generalitat Valenciana,
Project PROMETEO/2017/102.


\small

\smallskip
\noindent N. C. Bernardes Jr.\\
\textit{ Departamento de Matem\'atica Aplicada, Instituto de Matem\'atica,
Universidade Federal do Rio de Janeiro, Caixa Postal 68530, Rio de Janeiro,
RJ 21945-970, Brazil.}\\
\textit{ e-mail:} ncbernardesjr@gmail.com

\smallskip
\noindent A. Bonilla\\
\textit{ Departamento de An\'alisis Matem\'atico, Universidad de la Laguna,
38271, La Laguna (Tenerife), Spain.}\\
\textit{ e-mail:} abonilla@ull.es

\smallskip
\noindent A. Peris\\
\textit{ Institut Universitari de Matemàtica Pura i Aplicada, Universitat Polit\`ecnica de Val\`encia, Edifici 8E, Acces F, 4a planta, 46022 Val\`encia, Spain.}\\
\textit{ e-mail:} aperis@mat.upv.es


\begin{thebibliography}{99}

\bibitem{ABMP}
    A. A. Albanese, X. Barrachina, E. M. Mangino and A. Peris, 
    \textit{Distributional chaos for strongly continuous semigroups of
    operators},
    Commun.\ Pure Appl.\ Anal.\ \textbf{12} (2013), no.\ 5, 2069--2082.

\bibitem{BC12}
    X. Barrachina and J. A. Conejero, 
    \textit{Devaney chaos and distributional chaos in the solution of
    certain partial differential equations},
    Abstr.\ Appl.\ Anal.\ 2012, Art.\ ID 457019, 11 pp.
    
\bibitem{BP12}
    X. Barrachina and A. Peris, 
    \textit{Distributionally chaotic translation semigroups}, 
    J. Difference Equ.\ Appl.\ \textbf{18} (2012), no.\ 4, 751--761.

\bibitem{BaGr06}
    F. Bayart and S. Grivaux,
    \textit{Frequently hypercyclic operators},
    Trans.\ Amer.\ Math.\ Soc.\ \textbf{358} (2006), no.\ 11, 5083--5117.

\bibitem{BM}
    F. Bayart and \'E. Matheron,
    \textit{Dynamics of Linear Operators},
    Cambridge University Press, Cambridge, 2009.

\bibitem{BR}
    F. Bayart, F. and I. Z. Ruzsa,
    \textit{Difference sets and frequently hypercyclic weighted shifts},
    Ergodic Theory Dynam.\ Systems \textbf{35} (2015), no.\ 3, 691--709.
    
\bibitem{beltran14} 
    M. J. Beltr\'an-Meneu, 
    \textit{Operators on weighted spaces of holomorphic functions}, 
    PhD Thesis, Universitat Polit\`ecnica de Val\`encia, 2014.

\bibitem{BBMP11}
    T. Berm\'{u}dez, A. Bonilla, F. Mart\'{\i}nez-Gim\'{e}nez and A. Peris,
    \textit{Li-Yorke and distributionally chaotic operators},
    J. Math.\ Anal.\ Appl.\ \textbf{373} (2011), no.\ 1, 83--93.

\bibitem{BermBMP}
    T. Berm\'{u}dez, A. Bonilla, V. M\"uller and A. Peris,
    \textit{Ces\`aro bounded operators in Banach spaces},
    J. Anal.\ Math.\ (to appear).
    
\bibitem{BB16}
     L. Bernal-González and A. Bonilla, 
     \textit{Order of growth of distributionally irregular entire functions
     for the differentiation operator},
     Complex Var.\ Elliptic Equ.\ \textbf{61} (2016), no. 8, 1176--1186.

\bibitem{BBMP}
    N. C. Bernardes Jr., A. Bonilla, V. M\"uller and A. Peris,
    \textit{Distributional chaos for linear operators},
    J. Funct.\ Anal.\ \textbf{265} (2013), no.\ 9, 2143--2163.

\bibitem{BBMP2}
    N. C. Bernardes Jr., A. Bonilla, V. M\"uller and A. Peris,
    \textit{Li-Yorke chaos in linear dynamics},
    Ergodic Theory Dynam.\ Systems \textbf{35} (2015), no.\ 6, 1723--1745.

\bibitem{BBPW}
    N. C. Bernardes Jr., A. Bonilla, A. Peris and X. Wu,
    \textit{Distributional chaos for operators on Banach spaces},  
    J. Math.\ Anal.\ Appl.\ \textbf{459} (2018), no.\ 2, 797--821.
    
\bibitem{BPR17} 
    N. C. Bernardes Jr., A. Peris and F. Rodenas, 
    \textit{Set-valued chaos in linear dynamics}, 
    Integral Equations Operator Theory \textbf{88} (2017), no.\ 4, 451--463.

\bibitem{BMPP}
    J. B\`es, Q. Menet, A. Peris and Y. Puig,
    \textit{Recurrence properties of hypercyclic operators},
    Math.\ Ann.\ \textbf{366} (2016), no.\ 1-2, 545--572.

\bibitem{BoGE07}
    A. Bonilla and K.-G. Grosse-Erdmann,
    \textit{Frequently hypercyclic operators and vectors},
    Ergodic Theory Dynam.\ Systems \textbf{27} (2007), no.\ 2, 383--404.
    Erratum: Ergodic Theory Dynam.\ Systems \textbf{29} (2009), no.\ 6, 1993--1994.
    
\bibitem{CLMP}
    J. A. Conejero, C. Lizama, M. Murillo-Arcila and A. Peris, 
    \textit{Linear dynamics of semigroups generated by differential operators}, 
    Open Math.\ \textbf{15} (2017), 745--767.  
    
\bibitem{CMP} 
    J. A. Conejero, V. Müller and A. Peris,  
    \textit{Hypercyclic behaviour of operators in a hypercyclic $C_0$-semigroup},
     J. Funct.\ Anal.\ \textbf{244} (2007), no.\ 1, 342--348.
    
\bibitem{CRT}
    J. A. Conejero, F. Rodenas and M. Trujillo, 
    \textit{Chaos for the hyperbolic bioheat equation}, 
    Discrete Contin.\ Dyn.\ Syst.\ \textbf{35} (2015), no.\ 2, 653--668.

\bibitem{D}
    T. Downarowicz,
    \textit{Positive topological entropy implies chaos DC2},
    Proc.\ Amer.\ Math.\ Soc.\ \textbf{142} (2014), no.\ 1, 137--149.

\bibitem{EN} K.-J. Engel and R. Nagel,
    \textit{One-Parameter Semigroups for Linear Evolution Equations},
    Springer-Verlag, New York - Berlin, 2000.

\bibitem{F}
    N. S. Feldman,
    \textit{Hypercyclicity and supercyclicity for invertible bilateral
    weighted shifts},
    Proc.\ Amer.\ Math.\ Soc.\ \textbf{131} (2003), no.\ 2, 479--485.

\bibitem{F-KOS}
    M. Fory\'s-Krawiec, P. Oprocha and M. \v{S}tef\'ankov\'a,
    \textit{Distributionally chaotic systems of type 2 and rigidity},
    J. Math.\ Anal.\ Appl.\ \textbf{452} (2017), no.\ 1, 659--672.

\bibitem{GARJIN}
    F. Garcia-Ramos and L. Jin,
    \textit{Mean proximality and mean Li-Yorke chaos},
    Proc.\ Amer.\ Math.\ Soc.\ \textbf{145} (2017), no.\ 7, 2959--2969.

\bibitem{GM}
    S. Grivaux and  \'E. Matheron,
    \textit{Invariant measures for frequently hypercyclic operators},
    Adv.\ Math.\ \textbf{265} (2014), 371--427.

\bibitem{GEP11}
    K.-G. Grosse-Erdmann and A. Peris Manguillot,
    \textit{Linear Chaos}, Springer, London, 2011.

\bibitem{HCC09}
    B. Hou, P. Cui and Y. Cao,
    \textit{Chaos for Cowen-Douglas operators},
    Proc.\ Amer.\ Math.\ Soc.\ \textbf{138} (2010), no.\ 3, 929--936.

\bibitem{HouLuo15}
    B. Hou and L. Luo,
    \textit{Some remarks on distributional chaos for bounded linear operators},
    Turkish J. Math.\ \textbf{39} (2015), no.\ 2, 251--258.

\bibitem{HLY} W. Huang, J. Li and X. Ye,
    \textit{Stable sets and mean Li-Yorke chaos in positive entropy systems},
    J. Funct.\ Anal.\ \textbf{266} (2014), no.\ 6, 3377--3394.

\bibitem{Leon}
    F. Le\'on-Saavedra,
    \textit{Operators with hypercyclic Ces\`aro means},
    Studia Math.\ \textbf{152} (2002), no.\ 3, 201--215.

\bibitem{LTY} J. Li, S. Tu and X. Ye,
    \textit{Mean equicontinuity and mean sensitivity},
    Ergodic Theory Dynam.\ Systems \textbf{35} (2015), no.\ 8, 2587--2612.

\bibitem{MOP09}
    F. Mart\'{\i}nez-Gim\'{e}nez, P. Oprocha and A. Peris,
    \textit{Distributional chaos for backward shifts},
    J. Math.\ Anal.\ Appl.\ \textbf{351} (2009), no.\ 2, 607--615.

\bibitem{MOP13}
    F. Mart\'{\i}nez-Gim\'{e}nez, P. Oprocha and A. Peris,
    \textit{Distributional chaos for operators with full scrambled sets},
    Math.\ Z. \textbf{274} (2013), no.\ 1-2, 603--612.

\bibitem{M}
    Q. Menet,
    \textit{Linear chaos and frequent hypercyclicity },
    Trans.\ Amer.\ Math.\ Soc.\ \textbf{369} (2017), no.\ 7, 4977--4994.

\bibitem{MV}
    V. M\"uller and J. Vr\v{s}ovsk\'y,
    \textit{Orbits of linear operators tending to infinity},
    Rocky Mountain J. Math.\ \textbf{39} (2009), no.\ 1, 219--230.

\bibitem{SS}
    J. Sm\'ital and M. \v{S}tef\'ankov\'a,
    \textit{Distributional chaos for triangular maps},
    Chaos Solitons Fractals \textbf{21} (2004), no.\ 5, 1125--1128.
    
\bibitem{W14}
    X. Wu, 
    \textit{Li-Yorke chaos of translation semigroups}, 
    J. Difference Equ.\ Appl.\ \textbf{20} (2014), no. 1, 49--57.

\bibitem{WWC}    
    X. Wu, L. Wang and G. Chen, 
    \textit{Weighted backward shift operators with invariant distributionally
    scrambled subsets},
    Ann.\ Funct.\ Anal.\ \textbf{8} (2017), no.\ 2, 199--210.

\bibitem{WOC} 
    X. Wu, P. Oprocha and G. Chen,
    \textit{On various definitions of shadowing with average error in tracing},
    Nonlinearity \textbf{29} (2016), no.\ 7, 1942--1972.
    
\bibitem{YY16}
    Z. Yin and Q. Yang,
    \textit{Generic distributional chaos and principal measure in linear
    dynamics},
    Ann.\ Polon.\ Math.\ \textbf{118} (2016), no.\ 1, 71--94.
    
\bibitem{YY17}
    Z. Yin and Q. Yang,
    \textit{Distributionally n-scrambled set for weighted shift operators}, 
    J. Dyn.\ Control Syst.\ \textbf{23} (2017), no.\ 4, 693--708.
    
\bibitem{YY18} 
    Z. Yin and Q. Yang, 
    \textit{Distributionally n-chaotic dynamics for linear operators}, 
    Rev.\ Mat.\ Complut.\ \textbf{31} (2018), no.\ 1, 111--129.

\end{thebibliography}
\end{document}